\theoremstyle{plain}
\newtheorem{theorem}{Theorem}[section]
\newtheorem{lemma}[theorem]{Lemma}
\newtheorem{proposition}[theorem]{Proposition}
\newtheorem{corollary}[theorem]{Corollary}
\theoremstyle{definition}
\newtheorem{definition}[theorem]{Definition}
\newtheorem{remark}[theorem]{Remark}
\theoremstyle{remark}
\newcommand{\N}{\mathbb N}
\newcommand{\Z}{\mathbb Z}
\newcommand{\R}{\mathbb R}
\renewcommand{\S}{\mathbb S}
\newcommand{\dist}{{\rm dist}}
\newcommand{\diam}{{\rm diam}}
\newcommand{\Lip}{{\rm Lip}}
\newcommand{\hno}{{\mathcal H}^{N-1}}
\newcommand{\dd}{d}
\newcommand{\dhno}{\,\dd{\mathcal H}^{N-1}}
\newcommand{\e}{\varepsilon}
\renewcommand{\o}{\Omega}
\def\ca{\mathbbmss{1}}
\newcommand{\f}{\mathcal{F}}
\newcommand{\restr}{%
  \,\raisebox{-.127ex}{\reflectbox{\rotatebox[origin=br]{-90}{$\lnot$}}}\,%
}
\newcommand{\average}{{\mathchoice {\kern1ex\vcenter{\hrule height.4pt
width 6pt
depth0pt} \kern-9.7pt} {\kern1ex\vcenter{\hrule height.4pt width 4.3pt
depth0pt}
\kern-7pt} {} {} }}
\title[Nonisothermal multi-phase transitions]{Sharp interface limit of a multi-phase transitions model under nonisothermal conditions}
\author{Riccardo Cristoferi, Giovanni Gravina}
\address{Heriot-Watt University \\ Department of Mathematical Sciences \\ Edinburgh EH14 4AS \\ United Kingdom}
\email{r.cristoferi@hw.ac.uk}
\address{Department of Mathematical Analysis\\Faculty of Mathematics and Physics\\ 
Charles University\\Prague\\ Czech Republic}
\email {gravina@karlin.mff.cuni.cz} 
\keywords{$\Gamma$-convergence, singular perturbations, Cahn--Hilliard fluids, phase separations}
\subjclass[2010]{49J45, 34D15, 26B30}
\date{\today}
\begin{document}

\begin{abstract}
A vectorial Modica--Mortola functional is considered and the convergence to a sharp interface model is studied.
The novelty of the paper is that the wells of the potential are not constant, but depend on the spatial position in the domain $\o$.
The mass constrained minimization problem and the case of Dirichlet boundary conditions are also treated. The proofs rely on the precise understanding of minimizing geodesics for the degenerate metric induced by the potential.
\end{abstract}

\maketitle
\tableofcontents

\section{Introduction}

Phase transitions phenomena are ubiquitous in nature. Examples are the spinodal decomposition in metallic alloys, the change in the crystallographic structure in metals, the order-disorder transitions, and the alterations of the molecular structures. In view of the wide range of physical and industrial applications where phase transitions are observed, it is of primary interest to understand the different mechanisms  that govern these complex processes.
Many physical models have been proposed over the years to capture the behavior of these phenomena and an enormous amount of insight has been gained  by performing analytical studies.
For this reason, the theoretical investigation of phase transitions is still currently an active field of research in the mathematical community.
In the particular case of liquid-liquid phase transitions, the preferred model was proposed by van der Waals (see \cite{vanDW}) and was later independently rediscovered by Cahn and Hilliard (see \cite{CH}).
This theory revolves around the study of the so called Modica--Mortola energy functional (often referred to as the Ginzburg--Landau free energy in the physics literature), which is the foundation of the model we consider in this paper.

The primary focus of this work is the study of the $\Gamma$-convergence of the family of functionals
\[
\f_\e(u) \coloneqq \int_\o \left[\frac{1}{\e} W(x, u(x)) + \e|\nabla u(x)|^2 \right]\,dx,
\]
where $u \in H^1(\o; \R^M)$, with $M \ge 1$, and $W \colon \o \times \R^M \to [0,\infty)$ is a locally Lipschitz potential such that, for all $x \in \o$, $W(x, p) = 0$ if and only if $p \in \{z_1(x),\dots,z_k(x)\}$. Here $\Omega$ denotes an open bounded subset of $\R^N$ with Lipschitz  continuous boundary and, for $i \in \{1, \dots, k\}$, the $z_i \colon \o \to \R^M$ are given Lipschitz functions.

Our main contribution is the treatment of the case $M \geq 2$ for $x$-dependent wells, thus providing a first vectorial counterpart to some of the results in \cite{Bou, Ste_Sing}, where moving wells were considered in the scalar case. For the precise statement of our results we refer the reader to \Cref{statementMR}.

\begin{center}
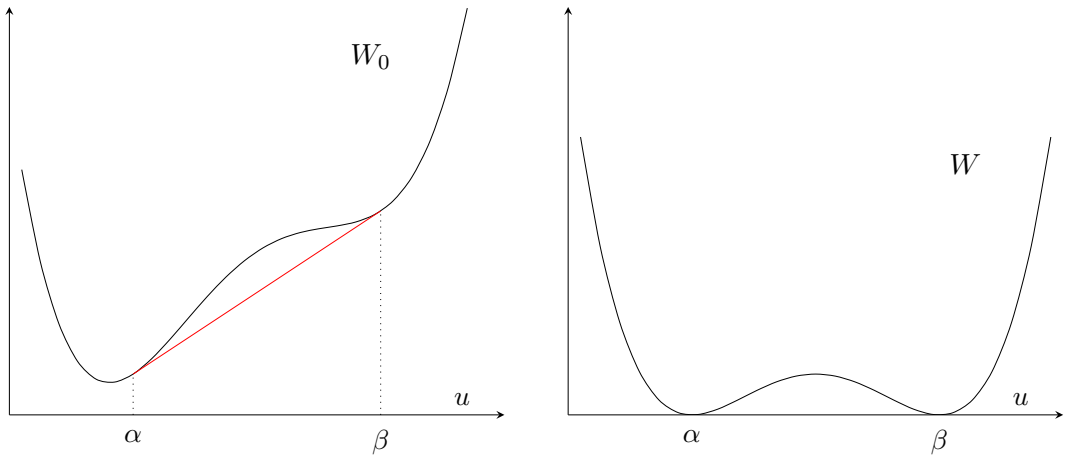
\begin{figure}[t]\centering
\begin{tikzpicture}[blend group=normal, scale=0.95]
% \draw[help lines, step = 0.1] (0,0) grid (4,4);
\begin{axis}[axis x line = middle, xmin = 0, xmax = 4, axis y line = middle, ymax = 10, ymin = 0, xlabel=$u$, ylabel=$$, xtick={1, 3}, xticklabels={$\alpha$, $\beta$}, xtick style={draw=none}, ytick style={draw=none}, yticklabels={},]           
             \addplot[smooth, domain=0.1:3.7] {(x - 1)^2*(x - 3)^2 + 2*x - 1};
             \addplot[smooth, domain=1:3, red, thin] {2*x - 1};
             \addplot[smooth, domain=0:1, dotted] (1,x);
             \addplot[smooth, domain=0:5, dotted] (3,x);
\end{axis}
\node at (5,5) {$W_0$};
\end{tikzpicture}
\quad
\begin{tikzpicture}[blend group=normal, scale=0.95]
\begin{axis}[axis x line = middle, xmin = 0, xmax = 4, axis y line = middle, ymax = 10, ymin = 0, xlabel=$u$, ylabel=$$, xtick={1, 3}, xticklabels={$\alpha$, $\beta$}, xtick style={draw=none}, ytick style={draw=none}, yticklabels={},]        
             \addplot[smooth, domain=0.1:3.9] {(x - 1)^2*(x - 3)^2};
\end{axis}
\node at (5.5,3.5) {$W$};
\end{tikzpicture}
         
\caption{On the left: the typical profile of the potential $W_0$ together with its convex envelope; the region where the two do not coincide is highlighted in red. On the right: the potential $W$, obtained by subtracting a linear term with slope $W_0'(\alpha)$ from $W_0$.}
\label{fig:W}
\end{figure}
\end{center}

\subsection{Previous works} Denote by $\o \subset \R^N$ the container of the material, and assume that 
the system is described by a scalar valued phase (or order) parameter $u \colon \o \to \R$, which for instance, in the case of a mixture of two or more fluids, represents the density.
Stable equilibrium configurations are local minimizers of the Gibbs free energy.
Under \emph{isothermal conditions}, consider
\begin{equation}\label{eq:enW}
\int_\o W_0(u(x))\,dx,
\end{equation}
where the free energy density $W_0\colon \R \to [0,\infty)$ is taken to be non-convex in order to support a phase transitions.
If the material has two stable phases, the typical form of $W_0$ is depicted in Figure \ref{fig:W}.
In many situations, the physical interpretation of the phase parameter naturally imposes a constraint on the class of admissible functions for the minimization problem for \eqref{eq:enW}.
If $u$ represents a density, this often takes the form of a volume constraint, \emph{i.e.},
\begin{equation}\label{eq:mcW}
\int_\o u(x)\,dx = m,
\end{equation}
for some $m\in\R$.
For $W_0$ as in Figure \ref{fig:W}, let $(\alpha,\beta)$ be the interval where $W_0$ does not coincide with its convex envelope.
To be precise, $\alpha$ and $\beta$ are chosen to satisfy
\[
W_0(\beta) - W_0(\alpha) = W'_0(\alpha)(\beta - \alpha),
\quad\quad\quad
W'_0(\alpha) = W'_0(\beta).
\]
The numbers $\alpha, \beta,\mu$, where $\mu \coloneqq W'_0(\alpha)$, are called \emph{Maxwells parameters} (see \cite{Gur_Interfacial}). Notice that if 
\[
m \in \left(-\infty,\alpha \mathcal{L}^N(\o) \right) \cup \left(\beta\mathcal{L}^N(\o),\infty \right)
\]
then there is no phase transitions, in that solutions to the minimization problem for \eqref{eq:enW} subject to the constraint in \eqref{eq:mcW} are constant. Here $\mathcal{L}^N(\o)$ denotes the volume of the set $\o$. Therefore, we assume that
\[
m \in \left(\alpha \mathcal{L}^N(\o),\beta \mathcal{L}^N(\o) \right)
\]
and restrict our attention to admissible functions $u$ taking values in the interval $[\alpha,\beta]$.
Define
\[
W(u) \coloneqq W_0(u) - W'_0(\alpha)(u-\beta) - W_0(\beta) . 
\]
Notice that $W(\alpha) = W(\beta) = 0$, that $W > 0$ otherwise, and that, in view of the mass constraint \eqref{eq:mcW}, replacing $W_0$ with $W$ in (\ref{eq:enW}) changes the free energy only by a constant.

As previously remarked by several authors, an energy of the form \eqref{eq:enW} cannot properly describe the physics of phase transitions. Indeed, given \emph{any} region $A \subset \o$ with
\[
\mathcal{L}^N(A) = \frac{\beta - m}{\beta - \alpha}\mathcal{L}^N(\o),
\]
the phase variable which takes the value $\alpha$ in $A$ and $\beta$ in $\o\setminus A$  satisfies the mass constraint \eqref{eq:mcW} and is a minimizer of the energy \eqref{eq:enW}.
This is not what it is observed in experiments, where for stable configurations the two phases are separated by an interface with minimal surface area.
Therefore, in order to capture this behavior, a term that penalizes the creation of interfaces has to be added to the energy.
Indeed, in the van der Waals--Cahn--Hilliard theory of phase transitions the following functional is considered
\begin{equation}\label{eq:Wgrad}
\int_\o \left[W(u(x)) + \e^2|\nabla u(x)|^2\right]\,dx.
\end{equation}
One can justify heuristically the choice of the \emph{singular perturbation} in \eqref{eq:Wgrad} by considering the idealized situation where the transition between the two phases takes place in a layer  $\Sigma_{\e}$, representing the diffuse separating interface. This is assumed to be an $\e$-tubular neighborhood of an $(N-1)$-dimensional surface $\Sigma$. In this case
\begin{align}\label{eq:heuristic}
\int_\o \left[ W(u(x)) + \e^2|\nabla u(x)|^2 \right]\,dx & = \int_{\Sigma_\e} \left[ W(u(x)) + \e^2|\nabla u(x)|^2 \right]\,dx \nonumber \\
& \sim \e \hno(\Sigma) \left[ 1 + \e^2 \frac{|\beta-\alpha|^2}{\e^2} \right],
\end{align}
where the last estimate is obtained by assuming that both $u$ and $|\nabla u|$ are bounded.
Here with $\hno(\Sigma)$ we denote the surface area of $\Sigma$. Therefore, in order to have an energy of order $1$ we need to rescale the functional by a factor of $1/\e$. Hence we consider
\begin{equation}\label{eq:rescaled}
\mathcal{E}_\e(u)\coloneqq\int_\o \left[ \frac{1}{\e} W(u(x)) + \e|\nabla u(x)|^2 \right]\,dx.
\end{equation}

It was conjectured by Gurtin (see \cite{Gur}) that in the limit as $\e\to0$, minimizers $\{u_\e\}_{\e > 0}$ of \eqref{eq:rescaled} subject to the constraint \eqref{eq:mcW} converge to a piecewise constant function $u$ that partitions $\o$ into two regions separated by an interface with minimal surface area.
This conjecture was proved rigorously by Carr, Gurtin, and Slemrod for $N = 1$ (see \cite{CarGurSle}), and by Modica (see \cite{Mod_Gra}) and Sternberg (see \cite{Ste_Sing}) for $N \ge 2$ (see also \cite{ModMor_Limite, ModMor_Esempio}), thus showing that the sharp interface limit of the phase field model \eqref{eq:rescaled} provides the minimal interface criterion observed in experiments.
The mathematical framework used was that of $\Gamma$-convergence, a notion of convergence introduced by De Giorgi and Franzoni in \cite{DGFra}.
To be precise, it was proved that the energy of the sequence $\{u_\e\}_{\e > 0}$ converges, as $\e \to 0$, to
\begin{equation}\label{eq:limenergy}
\sigma \hno(\Sigma),
\end{equation}
where $\Sigma$ is an interface having minimal area, separating two regions whose volume is determined by the mass constraint \eqref{eq:mcW} (which is preserved in the limit), and
\[
\sigma \coloneqq 2 \int_{\alpha}^\beta \sqrt{W(t)}\,dt.
\]
Observe that the factor $\sigma$ represents the energy needed in order to have a phase transition, and it is independent of the position and of the orientation of the interface. The value of $\sigma$ can be also characterized as
\begin{equation}\label{eq:optprofscalar}
\inf\left\{
\int_{-\infty}^{\infty} \left[W(\gamma(t)) + |\gamma'(t)|^2 \right]\,dt : \gamma \in W^{1,\infty}((-\infty,\infty)), \lim_{t \to-\infty} \gamma(t) = \alpha, \lim_{t\to\infty}\gamma(t)= \beta \right\}.
\end{equation}
Functions achieving the minimum in \eqref{eq:optprofscalar} are called \emph{heteroclinic connections} between $\alpha$ and $\beta$.

Let us remark that solutions to the minimal area problem enjoy some regularity properties:
the interior regularity was studied by Gonzales, Massari, and Tamanini in \cite{GonMasTam}, the behavior at the boundary of $\o$ was investigated by Gr\"uter in \cite{Gru}, while the connectivity of the interface was the focus of the paper \cite{SteZum_Conn} by Sternberg and Zumbrun.\\

After the early works mentioned above, the mathematical study of phase transitions has flourished.
Since the literature on this problem and its variants is vast, here we limit ourselves at recalling only the main contributions that are close to the problem we consider in this paper: the static problem for first order phase transitions.

The case where the material has more than two stable phases requires vector-valued phase variables $u:\o\to\R^M$.
Indeed, even if one considers a potential $W \colon \R \to [0,\infty)$ having more than two wells, minimizers of \eqref{eq:Wgrad} will converge to piecewise constant functions taking only two values, which are selected by the mass constraint.
A key ingredient in the treatment of the vectorial case is the study of the relation between the value in \eqref{eq:optprofscalar} where the minimization problem is suitably adapted to the vectorial case, and the geodesic distance between two of the wells $\alpha_i,\alpha_j\in\R^M$ with respect to the metric induced by the degenerate conformal factor $2 \sqrt{W}$, namely
\begin{multline}\label{eq:distW}
d_W(\alpha_i,\alpha_j) \coloneqq \inf\Biggl\{\int_{-1}^1 2\sqrt{W(\gamma(t))}|\gamma'(t)|\,dt : \gamma \in W^{1,\infty}((-1,1);\R^M), \\ \gamma(-1)=\alpha_i, \gamma(1)=\alpha_j \Biggr\}.
\end{multline}
The importance of this relation was first observed by Sternberg in \cite{Ste_Sing} for the case of a potential $W$ vanishing on two disjoint closed simple curves in $\R^2$.
The case of a phase variable $u \colon \o \to \R^M$ and a potential supporting $k = 2$ stable phases was treated by Sternberg in \cite{Ste_Vect} when $M=2$ and by Fonseca and Tartar in \cite{FonTar} when $M\geq 2$, while the general case $k \ge 2$ was investigated by Baldo in \cite{Bal}.
I these works the limiting energy is shown to be of the form
\begin{equation}\label{eq:limenmmulti}
\sum_{i=1}^{k-1} \sum_{j=i+1}^k\hno(\partial^* \o_i \cap \partial^* \o_j) d_W(\alpha_i, \alpha_j),
\end{equation}
where $\alpha_1,\dots,\alpha_k\in\R^M$ denote the wells of the potential $W$, and $\o_i\subset\o$ is the region where the phase variable $u\in BV(\o;\R^M)$ takes the value $\alpha_i$.
Here $\partial^*\o^i$ denotes the essential boundary of the set of finite perimeter $\o^i$ (see Definition \ref{def:partialstar}).

A further generalization was studied by Barroso and Fonseca in \cite{BarFon}, where the authors considered singular perturbations of the form $h(x, \e \nabla u(x))$ and vector-valued phase variables. Moreover, the fully coupled singular perturbations, \emph{i.e.}, Gibbs free energies of the form
\begin{equation}\label{eq:coupled}
\frac{1}{\e} \int_\o f(x, u(x), \e \nabla u(x))\,dx,
\end{equation}
were the main focus of the work \cite{OweSte} by Owen and Sternberg in the scalar case and of \cite{FonPop} by Fonseca and Popovici for vector-valued phase variables. For functionals defined as in \eqref{eq:coupled}, the sharp interface limiting energy was shown to be of the form
\begin{equation}\label{eq:limenergyanis}
\sum_{i=1}^{k-1} \sum_{j=i+1}^k \int_{\partial^*\o^i\cap\partial^*\o^j} \sigma(x,\nu(x))\dhno(x).
\end{equation}
In this case,  if $x$ is in the interface separating $\o_i$ and $\o_j$ and $\nu\in\S^{N-1}$, the energy density $\sigma$ is given by the so called \emph{cell problem}
\[
\sigma(x,\nu) \coloneqq \inf_{s>0} \inf_v \left\{\frac{1}{s}\int_{Q_\nu} f(x,v(y),s \nabla v(y))\,dy \right\},
\]
where $Q_\nu\subset\R^N$ is a unit cube centered at the origin having two faces orthogonal to $\nu$, and the function $v$ ranges among all Lipschitz functions taking the value $\alpha_i$ on one of these two faces, $\alpha_j$ on the other one, and it is $1$-periodic in the directions orthogonal to every other face of the cube $Q_\nu$.

The boundary of the container $\o$ could enter into play either via an interaction energy, or by forcing the phase variable to assume a specific value (not necessarily corresponding to a stable phase).
The first case was studied by Modica in \cite{Mod_Bou} where he considered the energy
\[
\mathcal{E}_\e(u) +  \int_{\partial\o} \tau(\mathrm{Tr}\, u(x)) \,\dhno(x).
\]
Here $\mathcal{E}_\e$ is defined as in \eqref{eq:rescaled},  $\tau \colon \R \to [0,\infty)$ is a $1$-Lipschitz function, and  $\mathrm{Tr}\,u$ denotes the trace of $u$ on $\partial \o$.
The author showed that the sharp interface limit is given by (see \eqref{eq:limenergy})
\[
\sigma\hno(\Sigma) + \int_{\partial\o} \tau(\mathrm{Tr}\, u(x)) \,\dhno(x).
\]
Owen, Rubinstein, and Sternberg treated the case where admissible functions for the minimization problem for \eqref{eq:rescaled} are constrained to satisfy a Dirichlet boundary condition $u = g_\e$ on $\partial \o$ (see \cite{OweRubSte}). The limiting problem was shown to be
\[
\sigma\hno(\Sigma) + \int_{\partial \o} d_W(\mathrm{Tr}\,u(x), g(x)) \dhno(x),
\]
where $g_\e \to g$ in a suitable sense and the distance $d_W$ is the one induced by the degenerate metric as in \eqref{eq:distW}.

The case where the zero level set of $W$ has a more complicated topology was considered by several authors.
The particular situation in which the potential vanishes on two disjoint $C^1$ curves in $\R^2$ was considered by Sternberg in \cite{Ste_Sing}.
The case where the set $Z$ of zeros of $W$ is a generic compact set in $\R^M$ was studied by Ambrosio in \cite{Amb}, where by considering the canonical quotient space $F$ of $(\R^M, d_W)$, together with the canonical projection map $\pi \colon \R^M \to F$, the author was able to prove that the family of functionals in \eqref{eq:rescaled} $\Gamma$-converges to
\begin{equation}\label{eq:Ambrosio}
\int_{J_{\pi(u)}} d_W\left(\pi(u^+(x)), \pi(u^-(x))\right)\dhno(x).
\end{equation}
More recently, Lin, Pan, and Wang (see \cite{LinXinWan}) characterized the asymptotic behavior of sequences of minimizers satisfying a Dirichlet boundary condition in the specific case where $Z$ is the union of two smooth disjoint manifolds $N^+, N^- \subset \R^M$, under the assumption that
\begin{equation}\label{eq:potWhigh}
W(p) \coloneqq f\left(d^2(p, N^+\cup N^-)\right),
\end{equation}
where $d$ denotes the distance function, and $f$ is smooth and behaves linearly in a neighborhood of the origin. Their proofs rely on the fact that geodesics for $d_W$ are shown to be segments joining two points of minimal distance between $N^+$ and $N^-$.

There are physically relevant cases where the phase variable $u$ is not expected to possess derivatives; thus a different singular perturbation is needed.
An example is the continuum limit of the Ising spin system on a lattice, where $u$ represents the magnetization density. In this case the appropriate energy to consider is
\[
\frac{1}{\e}\int_\o W(u(x))\,dx + \frac{1}{\e} \int_\o \int_\o \frac{1}{\e^N} K\left(\frac{y-x}{\e}\right)|u(y)-u(x)|^2 \,dx\,dy,
\]
where $K$ is a ferromagnetic Kac potential, which is assumed to be nonnegative and integrable. This energy was studied by Alberti and Bellettini in \cite{AlbBel2, AlbBel1}, where they proved that the discrete nature of the problem does not affect the form of the limiting energy, which in turn was shown to still be an anisotropic perimeter functional as in \eqref{eq:limenergyanis}.
Notice that the integrability assumption on the potential $K$ excludes the classical seminorm for fractional Sobolev spaces $W^{s,p}(\o)$ for $0 < s < 1$ and $p > 1$.
The one dimensional case for $s=\frac{1}{2}$ and $p = 2$ was considered by Alberti, Bouchitt\'{e}, and Seppecher in \cite{AlbBouSep}, where the authors identified the $\Gamma$-limit of the family of functionals
\[
\lambda_\varepsilon\int_I W(u(x))\,dx + \int_I\int_I \left| \frac{u(y)-u(x)}{y-x} \right|^p \,dx\,dy.
\]
Here $I \subset \R$ is a bounded interval and $\varepsilon \lambda_\varepsilon \to l \in (0,\infty)$. The one dimensional case for $p > 2$ was considered by Garroni and Palatucci in \cite{GarPal}. The $\Gamma$-convergence for the nonlocal perimeter functional
\[
\int_\o W(u(x))\,dx + \frac{\varepsilon^{2s}}{2} \int_\o\int_\o \frac{|u(y)-u(x)|^2}{|y-x|^{N+2s}}\,dx\,dy
	+\varepsilon^{2s}\int_{\R^N\setminus\o} \int_\o \left| \frac{u(y)-u(x)}{y-x} \right|^2\,dx\,dy
\]
in the case $N \ge 2$ and $s \in (0,1)$ was studied by Savin and Valdinoci in \cite{SavVal}.

The Euler--Lagrange equation for minimizers of the functional \eqref{eq:rescaled} subject to the mass constraint \eqref{eq:mcW} was investigated by Luckhaus and Modica in \cite{LucMod} (see \cite{CicNagPis} for the anisotropic case). The authors considered the equation
\[
2\e \Delta u + \frac{1}{\e}W'(u) = \lambda_\e
\]
where $\lambda_\e \in \R$ is the Lagrange multiplier associated to the mass constraint, and proved that
\begin{equation}\label{eq:GT}
\lambda_\e \to \sigma H,
\end{equation}
as $\e \to 0$. Here $H$ denotes the mean curvature of the limiting interface. Formula \eqref{eq:GT} is known in the physics literature as the \emph{Gibbs-Thomson relation}.

The minimal area interface principle serves as a first selection criterion to choose which of the (infinitely many) minimizers of \eqref{eq:enW} is physically relevant. More refined information can be obtained by considering the $\Gamma$-convergence expansion (see \cite{AnzBal_Dev, BraTru}) of the energy \eqref{eq:Wgrad}:
\[
\mathcal{E}_\e = \mathcal{E}^{(0)} + \e \mathcal{E}^{(1)} + \e^2 \mathcal{E}^{(2)} + \dots,
\]
where each $\mathcal{E}^{(i)}$ is the $\Gamma$-limit of the family of functionals
\[
\mathcal{E}^{(i)}_\varepsilon=\frac{\mathcal{E}^{(i-1)}_\varepsilon - \inf \mathcal{E}^{(i-1)}}{\varepsilon}
\]
and $\mathcal{E}^{(0)}_\varepsilon\coloneqq\mathcal{E}_\varepsilon$.
The characterization of the functional $\mathcal{E}^{(1)}$ was carried out by \cite{AnzBal_Dev, AnzBalOrl, BelNayNov, CarGurSle, DMFonLeo, LeoMur, LeoMur_AC} in several cases of interest.

Variants of phase transitions models of the form \eqref{eq:rescaled} could also be used to investigate more intricate situations. For instance, the interaction between phase transitions and homogenization phenomena is described by the functional
\[
\int_\o \left[ \frac{1}{\e}W\left(\frac{x}{\e}, u(x)\right) + \e |\nabla u(x)|^2 \right]\,dx.
\]
Here the periodic structure of the material is modeled by the periodicity of the function $W$ in the first variable. The sharp interface model was derived by Braides and Zeppieri in \cite{BraZep} for the one dimensional case $N = 1$ and by Fonseca, Hagerty, Popovici, and the first author in \cite{CriFonHagPop} (see also \cite{ChoFonLinVen}) for $N > 1$. When the homogenization takes place at the level of the singular perturbation we refer to \cite{AnsBraChi1, AnsBraChi2, DirMarNov1, DirMarNov2}.\\

All the previous works are based on (variants of) model \eqref{eq:rescaled}, which describes phenomena where the system is assumed to be under \emph{isothermal} conditions. There are physically relevant situations however, where this is not the case. For instance, consider a homogeneous mixture of a binary system in thermal equilibrium. If we quench the system below a critical temperature, then we would expect phase separation. Since the quenching takes place over a finite amount of time, the assumption of isothermal conditions is not plausible.
In addition, there are situations where the phase separation process can be directed by using
an external thermal activation (see \cite{AltPav} and the references therein).
In all of these cases, the model \eqref{eq:rescaled} is not adequate to describe the physics of the phenomenon.
A system of evolution equations aimed at modelling phase transitions under nonisothermal conditions was proposed by Penrose and Fife in \cite{PenFif} and by Alt and Pawlow in \cite{AltPav}.
The free energy they considered reads as
\[
\int_\o \left[\frac{1}{\e} W(T(x), u(x)) + \e K(T(x))|\nabla u(x)|^2\right]\,dx,
\]
where $T \colon \o \to \R$ represents the temperature of the material (or any external field), and $K$ is a given positive function. Here the unknowns of the problem are both the temperature distribution $T$ and the phase parameter $u$. In particular, it could be the case where the wells of $W$ depend themselves on the temperature, and thus are not necessarily the same for all points $x \in \o$.
The dependence of $W$ on both of the unknowns poses analytical challenges.

In order to get some insight we assume the distribution of the temperature $T$ to be given a priori and $K$ to be constant. These simplifications allow us to consider a free energy of the form
\begin{equation}\label{eq:ourenergy}
\f_\e(u) \coloneqq \int_\o \left[\frac{1}{\e} W(x, u(x)) + \e|\nabla u(x)|^2 \right]\,dx,
\end{equation}
where the potential $W \colon \o \times \R^M \to [0, \infty)$ is such that $W(x, p) = 0$ if and only if $p \in \{z_1(x), \dots, z_k(x)\}$,
and the $z_i \colon \o \to \R^M$ are given functions representing the stable phases of the material at each point $x \in \o$.

The functional \eqref{eq:ourenergy} was considered by Ishige in \cite{Ish_W} (see also \cite{Ish_vec}) in the vectorial case, \emph{i.e.} $M > 1$, when $k = 2$ and $z_1$, $z_2$ are constants.
To the best of our knowledge, there are only two papers that considered the case where the functions $z_i$ are nonconstant: \cite{Ste_Sing} by Sternberg and \cite{Bou} by Bouchitt\'{e}.
They both treated the scalar case, \emph{i.e.\@} $M = 1$, with \emph{two moving} wells.
A specific kind of potential in two dimensions is considered in the former work, while fully coupled singular perturbations in general dimension are treated in the latter.
More precisely, in \cite{Bou} the author considered an energy of the form
\[
\mathcal{G}_\e(u) \coloneqq \frac{1}{\e} \int_\o f(x, u(x),\e Du(x))\,dx,
\]
where $f(x, u, 0) = 0$ if and only if $u \in \{z_1(x), z_2(x)\}$. The wells $z_1$ and $z_2$ are allowed to coincide in a subset of $\o$. The limiting functional was shown to be
\[
\mathcal{G}_0(u)\coloneqq\int_{J_u} h(x,\nu_u(x)) \dhno(x),
\]
for $u \in BV_{loc}(\o_0)$, where $\o_0 \coloneqq \{ x \in \o : \alpha(x) \neq \beta(x) \}$, and $\infty$ otherwise in $L^1(\o)$. Here, for $x \in \o$ and $\nu \in \S^{N-1}$, we define
\[
h(x, \nu_u) \coloneqq \lim_{r \to \infty} \inf_\gamma \left\{ \int_0^r f(x, \gamma(t), \gamma'(t)\nu)\,dt \right\},
\]
where the infimum is taken over all Lipschitz curves $\gamma$ connecting $z_1(x)$ and $z_2(x)$. The scalar nature of the problem allows to implement techniques that are purely one dimensional and that cannot be adapted to the vectorial case.\\

In this paper we consider for the first time the energy \eqref{eq:ourenergy} in the vectorial case, with $k \ge 2$, and for functions $z_i$ which are possibly nonconstant. In particular, we prove that any sequence $\{u_{\e_n}\}_{n \in \N} \subset H^1(\o; \R^M)$ such that
\[
\sup \left\{ \f_{\e_n}(u_{\e_n}) : n \in \N \right\} < \infty,
\]
where $\e_n \to 0$, converges (eventually extracting a subsequence) to a function $u \in BV(\o; \R^M)$ of the form
\[
u = \sum_{i=1}^k z_i \ca_{\o_i}.
\]
Here $\{\o_1, \dots, \o_k\}$ is a Caccioppoli partition of $\o$. Moreover, the limiting sharp interface energy is
\[
\sum_{i=1}^{k-1} \sum_{j = i+1}^k  \int_{\partial^* \o_i \cap \partial^* \o_j}d_W(x,z_i(x), z_j(x)) \dhno(x),
\]
where, for $p, q \in \R^M$, $d_W(x, p, q)$ is the geodesic distance induced by the degenerate conformal factor $2 \sqrt{W(x, \cdot)}$. Notice that if the wells $z_i$ are independent of $x$ we recover \eqref{eq:limenmmulti}. We refer to the next section for the precise statement of the results and for all the assumptions we require.

%%%%%%%%%%%%%%%%%%%%%%%%%%%%%%%%%%%%%%%%%%%%%%%%
%%%%%%%%%%%%%%%%%%%%%%%%%%%%%%%%%%%%%%%%%%%%%%%%
%%%%%%%%%%%%%%%%%%%%%%%%%%%%%%%%%%%%%%%%%%%%%%%%
%%%%%%%%%%%%%%%%%%%%%%%%%%%%%%%%%%%%%%%%%%%%%%%%

\subsection{Statement of the main results}\label{statementMR}
Let $\o \subset \R^N$, $N \ge 2$, be a bounded open set with Lipschitz continuous boundary. Throughout the paper we make the following assumptions on the potential $W$.
\begin{enumerate}[label=(H.\arabic*), ref=H.\arabic*]
\item \label{H1} $W \colon \overline{\o} \times \R^M \to [0, \infty)$ is locally Lipschitz continuous, \emph{i.e.\@}, Lipschitz continuous on every compact subset of $\overline{\o} \times \R^M$. Moreover, for every $x \in \overline{\o}$, $W(x, p) = 0$ if and only if $p \in \{z_1(x), \dots, z_k(x)\}$, where the functions $z_i \colon \overline{\o} \to \R^M$ are Lipschitz continuous;
\item \label{H2} There exists $\delta > 0$ such that
\[
\min\left\{|z_i(x) - z_j(x)| : x \in \overline{\o} \text{ and } i \neq j \right\} \ge \delta;
\]
\item \label{H3} There exists $r > 0$ such that if $p \in B(z_i(x), r)$ then
\[
W(x, p) = \alpha_i |p - z_i(x)|^2,
\]
where $\alpha_i > 0$, for all $i = 1, \dots, k$ and $x\in\o$;
\item \label{H4} There exist $R, S > 0$ such that $W(x, p) \ge S|p|$, for all $x \in \o$ and all $p$ with $|p| > R$.
\end{enumerate}

\begin{definition}
For $\e > 0$, let $\f_\e \colon L^1(\o; \R^M) \to [0, \infty]$ be the functional defined via
\[
\f_\e(u) \coloneqq
\left\{
\begin{array}{ll}
\displaystyle \int_\o \left[ \frac{1}{\e}W\left(x,u(x)\right) + \e|\nabla u(x)|^2 \right]\,dx & \text{ if } u \in H^1(\o; \R^M), \\
&\\
\infty & \text{ otherwise in } L^1(\o;\R^M).
\end{array}
\right.
\]
\end{definition}

In order to define the limiting functional, we need to introduce some notation.

\begin{definition}
\label{defdW}
For $p, q \in \R^M$ consider the class
\begin{equation}
\label{Apq}
\mathcal{A}(p, q) \coloneqq \left\{ \gamma \in W^{1,1}((-1,1); \R^M) : \gamma(-1) = p, \gamma(1) = q \right\}
\end{equation}
and let $d_W \colon \overline{\o} \times \R^M \times \R^M \to[0, \infty)$ be the function defined via
\begin{equation}\label{eq:infpbdw}
d_W(x,p,q) \coloneqq \inf \left\{\int_{-1}^1 2\sqrt{W(x, \gamma(t))}|\gamma'(t)|\,dt : \gamma \in \mathcal{A}(p,q) \right\}.
\end{equation}
\end{definition}

It is immediate to verify that for all $x \in\overline{\o}$, the function $d_W(x, \cdot, \cdot) \colon \R^M \times \R^M \to [0, \infty)$ defines a distance on $\R^M$. The existence of solutions to the minimization problem \eqref{eq:infpbdw}, referred to as \emph{minimizing geodesics} throughout the paper, is a classical problem which has been the subject of investigation of several studies. Since our proofs rely on a precise understanding of the dependence of minimizing geodesics on the variable $x$, our approach (see \Cref{dFprop}) requires more stringent assumptions on $W$ than the ones required by Zuniga and Sternberg in \cite[Theorem 2.5]{ZunSter} (see also \cite{MS18}), and is in spirit closer to the work of Sternberg \cite{Ste_Vect}.

We can now define our limiting functional. For all the relevant definitions we refer the reader to \Cref{sec:preliminaries} (see, in particular, \Cref{def:jump}).

\begin{definition}
\label{f0}
Set
\[
BV(\o; z_1, \dots, z_k) \coloneqq \left\{ u \in BV(\o; \R^M) : u(x) \in \{z_1(x), \dots, z_k(x)\} \text{ for } \mathcal{L}^N\text{-a.e.\@ } x \in \o \right\},
\]
and let $\f_0 \colon L^1(\o; \R^M) \to [0, \infty]$ be the functional defined via
\[
\f_0(u) \coloneqq
\left\{
\begin{array}{ll}
\displaystyle \int_{J_u} d_W(x, u^+(x), u^-(x))\,d\hno(x) & \text{ if } u \in BV(\o; z_1, \dots, z_k), \\
&\\
\infty & \text{ otherwise in } L^1(\o;\R^M).
\end{array}
\right.
\]
\end{definition}

Throughout the rest of the paper we fix $\{\e_n\}_{n \in \N}\subset (0,\infty)$ with $\e_n \to 0$ as $n \to \infty$, and we write $\f_n$ for $\f_{\e_n}$.
We are now in position to state our main result.

\begin{theorem}\label{thm:main}
Let $W$ be given as in \emph{(\ref{H1})}--\emph{(\ref{H4})} and let $\{u_n\}_{n \in \N} \subset H^1(\o; \R^M)$ be such that
\[
\sup \left\{ \f_n(u_n) : n \in \N \right\} < \infty.
\]
Then, eventually extracting a subsequence (not relabeled), we have that $u_n \to u$ in $L^1(\o; \R^M)$, where $u \in BV(\o; z_1, \dots, z_k)$ is such that $\f_0(u) < \infty$. Moreover, the sequence of functionals $\f_n$ $\Gamma$-converges with respect to the $L^1$-topology to $\f_0$. %$\f_n \stackrel{\Gamma-L^1}{\longrightarrow} \f_0$.
\end{theorem}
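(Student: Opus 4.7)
I would first deduce $L^1$-boundedness of $\{u_n\}$ from $\int_\o W(x, u_n)\,dx \le C\e_n$ together with the growth condition \eqref{H4}. Combined with Young's inequality, $\f_\e(u) \ge \int_\o 2\sqrt{W(x,u)}\,|\nabla u|\l$, the natural auxiliary quantity is the vector-valued map $\Psi(x, p) := (d_W(x, p, z_1(x)), \dots, d_W(x, p, z_k(x)))$. From \Cref{dFprop} (which quantifies the dependence of $d_W$ on $x$), one expects $\Psi$ to be locally Lipschitz with $|\nabla_p \Psi_i(x,p)| \le 2\sqrt{W(x,p)}$ and with uniformly controlled $x$-Lipschitz constant on bounded sets, so the chain rule yields a uniform BV bound for $x \mapsto \Psi(x, u_n(x))$. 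A subsequence therefore converges in $L^1$, and combined with $\int_\o W(x, u_n)\,dx \to 0$ and the uniform separation in \eqref{H2}, this forces $u_n \to u = \sum_{i=1}^k z_i \ca_{\o_i}$ in $L^1$ for some Caccioppoli partition $\{\o_i\}$ of $\o$. Controlling the perimeter of each $\o_i$ by the BV norm of the limiting $\Psi(\cdot, u)$ gives $u \in BV(\o; z_1, \dots, z_k)$ and $\f_0(u) < \infty$.

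\textbf{Liminf inequality.} The plan is to consider the Radon measures $\mu_n := \bigl[\tfrac{1}{\e_n} W(\cdot, u_n) + \e_n |\nabla u_n|^2\bigr]\l$, pass to a weak-$*$ limit $\mu_n \wtom \mu$ along a subsequence, and show that the Radon--Nikodym density of $\mu$ with respect to $\hno \restr J_u$ is bounded below by $d_W(x, u^+(x), u^-(x))$ for $\hno$-a.e.\ $x \in J_u$. I would do this by a standard blow-up argument at a typical jump point $x_0$ with normal $\nu_u(x_0)$: after rescaling $y \mapsto x_0 + r y$, the continuity of $z_i$ and of $d_W(\cdot, p, q)$ from \Cref{dFprop} allows one to replace $W(x,\cdot)$ by the frozen potential $W(x_0,\cdot)$ with an error that vanishes as $r \to 0$. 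This reduces the estimate to the constant-wells vectorial lower bound of Fonseca--Tartar and Baldo, which produces exactly $d_W(x_0, z_i(x_0), z_j(x_0))$.

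\textbf{Limsup inequality and main obstacle.} For the upper bound, a density argument reduces the problem to functions $u \in BV(\o; z_1, \dots, z_k)$ whose interfaces $\partial^* \o_i \cap \partial^* \o_j$ are polyhedral. Near a flat piece of interface with normal $\nu$ passing through $x_0$, I would construct $u_n$ by inserting, in an $O(\e_n)$-tubular neighborhood, a suitably reparametrized minimizing geodesic connecting $z_i(x_0)$ to $z_j(x_0)$ in the metric induced by $2\sqrt{W(x_0, \cdot)}$, smoothly gluing to $z_i$ and $z_j$ on either side; a cut-off handles the $\hno$-negligible neighborhood of the $(N-2)$-dimensional skeleton where several phases meet. \emph{The main obstacle} is precisely the $x$-dependence of these geodesics: the profile itself must vary with the base point along $\partial^* \o_i \cap \partial^* \o_j$, so one needs the continuous dependence on $x$ of minimizing geodesics provided by \Cref{dFprop} in order to assemble the local constructions into a globally valid competitor whose energy integrates to $\int_{\partial^*\o_i\cap\partial^*\o_j} d_W(x, z_i(x), z_j(x))\dhno(x)$ in the limit.
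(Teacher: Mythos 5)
Your overall architecture (compactness via auxiliary geodesic-distance functions, liminf via the Fonseca--M\"uller blow-up, limsup via polyhedral approximation plus a geodesic-profile construction) matches the paper's, and you correctly identify \Cref{dFprop} and \Cref{dWisLipsc} as the new technical input needed to handle moving wells. There are, however, two places where the proposal either glosses over a nontrivial step or relies on a property that \Cref{dFprop} does not actually provide.

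In the liminf inequality, ``replacing $W(x,\cdot)$ by the frozen potential $W(x_0,\cdot)$ with an error that vanishes as $r\to 0$'' is not immediate. After slicing and rescaling, the quantity to bound from below is of the form $\inf_\gamma \int_{-1}^1 2\sqrt{W((x',\rho_m s),\gamma(s))}\,|\gamma'(s)|\,ds$, which is \emph{not} a geodesic problem: the conformal factor depends on the integration variable $s$, so the functional is not reparametrization-invariant, and justifying the freezing requires a uniform Euclidean-length bound on almost-minimizers of this non-geometric functional (not just Lipschitz continuity of $d_W$ in $x$). The paper explicitly flags this alternative route in the strategy section and then takes a cleaner path: it replaces the integrand by the pointwise minorant $F_m(p)=\min\{2\sqrt{W(x,p)}:x\in\overline{R_m}\}$, which is a lower bound by construction, and then shows $d_{F_m}(z_{i_1}(x_0),z_{i_2}(x_0))\to d_W(x_0,z_{i_1}(x_0),z_{i_2}(x_0))$ using \Cref{dFprop} to get a uniform length bound on minimizers of the \emph{geometric} problem $d_{F_m}$, so that the locally uniform (Dini) convergence $F_m\to 2\sqrt{W(x_0,\cdot)}$ can be exploited. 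Your version is plausible but requires the length estimate for a different variational problem than the one addressed by \Cref{dFprop}, which you do not supply.

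In the limsup inequality the issue is more substantive: you invoke ``the continuous dependence on $x$ of minimizing geodesics provided by \Cref{dFprop}.'' That proposition gives only a uniform \emph{length bound} on some minimizing geodesic; it gives no continuous selection $x\mapsto\gamma_x$, and such a selection need not exist since minimizing geodesics are in general non-unique. The paper's construction is designed precisely to avoid needing one. It lays down an $(N-1)$-dimensional grid of side $r_n\sim\e_n^{1/6}$ on each flat piece of interface, anchors a single minimizing geodesic $\gamma^j$ to the cube center $\overline{y}_j$, glues adjacent cubes with a partition of unity $\{\varphi_n^j\}$, and matches the geodesic's endpoints $z_i(\overline{y}_j)$ to the actual wells $z_i(y,\pm)$ by linear interpolation in thin layers. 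The continuity that is actually used is the Lipschitz continuity of the \emph{scalar} map $x\mapsto d_W(x,z_i(x),z_j(x))$ from Corollary~\ref{dWisLipsc}, combined with explicit estimates showing that the gluing regions (lateral cut-off slabs and the interpolation layers) carry asymptotically vanishing energy. Without this discretization-and-cutoff scheme your argument has a genuine gap.

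A minor remark on compactness: for $M>1$, $H^1(\o;\R^M)$ is not closed under coordinate truncation, so to reduce to $L^\infty$-bounded competitors one must project onto a ball, after first modifying $W$ to an auxiliary $W_1$ that agrees with $W$ near the wells and is monotone at infinity (so the projection does not increase energy). Once this is done, the BV bound for $x\mapsto d_{W_1}(x,z_i(x),v_n(x))$ proceeds componentwise, essentially as in your vectorial map $\Psi$.
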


The proofs we present are robust and can be adapted to work also for several variants of the problem. In this paper we focus on two of these: the mass constrained problem and the case of Dirichlet boundary conditions. 

Fix $\mathcal{M} = (m_1,\dots,m_M)\in\R^M$ in such a way that
\[
\min_{1 \le i \le k} \int_\o z_i^j(x) \,dx \le m_j \le \max_{1 \le i \le k} \int_\o z_i^j(x) \,dx
\]
for every $j = 1, \dots, M$, where $z_i^j(x)$ denotes the $j^{th}$ component of $z_i(x)$.

\begin{theorem}\label{thm:massconstr}
Let $W$ be given as in \emph{(\ref{H1})}--\emph{(\ref{H4})} and let $\mathcal{M} \in \R^M$ be as above.
For $n\in\N$, let
\[
\f_n^{\mathcal{M}}(u)\coloneqq
\left\{
\begin{array}{ll}
\f_n(u) & \text{ if }u\in H^1(\o;\R^M) \text{ with } \int_\o u(x)\, dx=\mathcal{M},\\
&\\
\infty & \text{ otherwise in } L^1(\o;\R^M).
\end{array}
\right.
\]
Then the followings hold:
\begin{itemize}
\item[$(i)$] if $\{u_n\}_{n \in \N} \subset H^1(\o; \R^M)$ is such that
\[
\sup \{ \mathcal{F}^{\mathcal{M}}_n(u_n) : n \in \N \} < \infty,
\]
then, eventually extracting a subsequence (not relabeled), we have that $u_n \to u$ in $L^1(\o; \R^M)$, where $u \in BV(\o;z_1,\dots,z_k)$ is such that $\mathcal{F}^{\mathcal{M}}_0(u) < \infty$. Here the functional $\mathcal{F}^{\mathcal{M}}_0$ is defined via
\[
\f_0^{\mathcal{M}}(u) \coloneqq
\left\{
\begin{array}{ll}
\f_0(u) & \text{ if }u \in BV(\o; z_1, \dots, z_k) \text{ with } \int_\o u(x)\,dx=\mathcal{M},\\
&\\
\infty & \text{ otherwise in } L^1(\o; \R^M).
\end{array}
\right.
\]
\item[$(ii)$] The sequence of functionals $\mathcal{F}^{\mathcal{M}}_n$ $\Gamma$-converges with respect to the $L^1$-topology to $\mathcal{F}^{\mathcal{M}}_0$.
\end{itemize}
\end{theorem}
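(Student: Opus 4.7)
The plan is to reduce \Cref{thm:massconstr} to \Cref{thm:main} by tracking the global mass constraint through the approximation procedure. Since $\mathcal{F}_n^{\mathcal{M}} \geq \mathcal{F}_n$ pointwise, any sequence with $\sup_n \mathcal{F}_n^{\mathcal{M}}(u_n) < \infty$ has bounded unconstrained energy, so \Cref{thm:main} delivers a subsequence converging in $L^1$ to some $u \in BV(\o; z_1, \dots, z_k)$ with $\mathcal{F}_0(u) < \infty$. The mass constraint persists in the limit because $L^1$-convergence preserves integrals:
\[
\int_\o u\, dx = \lim_{n\to\infty} \int_\o u_n\, dx = \mathcal{M}.
\]
Hence $\mathcal{F}_0^{\mathcal{M}}(u) = \mathcal{F}_0(u) < \infty$, which proves $(i)$. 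The $\Gamma$-$\liminf$ half of $(ii)$ follows by the same reasoning: when $\liminf_n \mathcal{F}_n^{\mathcal{M}}(u_n) < \infty$, passing to a realizing subsequence forces $\int_\o u_n = \mathcal{M}$ eventually, hence $\int_\o u = \mathcal{M}$, and the bound reduces to the unconstrained $\Gamma$-$\liminf$ from \Cref{thm:main}.

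The substance of the proof is the $\Gamma$-$\limsup$. Given $u \in BV(\o; z_1, \dots, z_k)$ with associated Caccioppoli partition $\{\o_1, \dots, \o_k\}$, $\int_\o u\, dx = \mathcal{M}$, and $\mathcal{F}_0(u) < \infty$, let $\tilde u_n$ be a recovery sequence from \Cref{thm:main}, and define the mass defect $\xi_n := \mathcal{M} - \int_\o \tilde u_n\, dx$. Because the Modica--Mortola-type construction underlying \Cref{thm:main} confines $\tilde u_n - u$ to an $O(\e_n)$-tubular neighborhood of $J_u$ with uniformly bounded range, we have $\|\tilde u_n - u\|_{L^1(\o;\R^M)} = O(\e_n)$ and therefore $|\xi_n| = O(\e_n)$. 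Choose an index $i_0$ with $\mathcal{L}^N(\o_{i_0}) > 0$, a Lebesgue density-one point $x_0$ of $\o_{i_0}$, a ball $B \Subset \o$ centered at $x_0$, and a cutoff $\phi \in C_c^\infty(B; [0,\infty))$ with $\int_B \phi\, dx = 1$. The corrected sequence
\[
u_n := \tilde u_n + \xi_n \phi \in H^1(\o;\R^M)
\]
satisfies $\int_\o u_n\, dx = \mathcal{M}$ and $u_n \to u$ in $L^1$. Since $u_n = \tilde u_n$ outside $B$, it remains to show that the energy contributions on $B$ differ by $o(1)$. The gradient term is handled by expansion of $|\nabla u_n|^2$ together with the a priori bound $\|\nabla \tilde u_n\|_{L^2(B)} = O(\e_n^{-1/2})$ coming from $\sup_n \f_n(\tilde u_n) < \infty$, which yields a correction of size $O(\e_n^{1/2}|\xi_n|) + O(\e_n |\xi_n|^2) = O(\e_n^{3/2})$.

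The potential contribution is the delicate piece, and here assumption (\ref{H3}) is essential. On each $B \cap \o_j$, the identity $u = z_j$ combined with the $L^1$-convergence $\tilde u_n \to z_j$ ensures that both $\tilde u_n$ and $u_n$ lie in $B(z_j(x), r)$ except on a subset whose measure vanishes as $n \to \infty$; on this good set, the quadratic form of $W$ yields
\[
W(x, u_n) - W(x, \tilde u_n) = 2\alpha_j (\tilde u_n - z_j)\cdot \xi_n \phi + \alpha_j |\xi_n|^2 \phi^2,
\]
and integrating and dividing by $\e_n$ produces $o(|\xi_n|/\e_n) + O(|\xi_n|^2/\e_n) = o(1) + O(\e_n)$. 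The complementary small-measure set is absorbed through the local Lipschitz continuity of $W$. The main obstacle is precisely this matching of scales: the rate $|\xi_n| \lesssim \e_n$, inherited from the sharp-interface character of the unconstrained recovery sequence, combines with the quadratic well behavior (\ref{H3}) to turn the potential correction into the harmless $O(|\xi_n|^2/\e_n)$, rather than the prohibitive $O(|\xi_n|/\e_n) = O(1)$ that a generic Lipschitz potential would produce. Taken together, these estimates yield $\limsup_n \mathcal{F}_n^{\mathcal{M}}(u_n) \leq \mathcal{F}_0(u)$ and complete the proof.
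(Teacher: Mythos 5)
Your overall strategy is sound and the limsup construction is genuinely different from the paper's. The paper (following Ishige) corrects the mass defect with the \emph{spatially constant} function $\eta_n/\mathcal{L}^N(\o)$, which leaves the gradient term of the energy untouched and, crucially, makes the linear Taylor term $\nabla_p W(x,v_n(x))\cdot\eta_n$ vanish identically outside the small set $E_n = \{v_n \neq w_n\}$ (since $v_n$ equals one of the wells there, where $\nabla_p W = 0$ by \eqref{H3}); the remaining pieces are then bounded by $C|\eta_n|\mathcal{L}^N(E_n)/\e_n + C|\eta_n|^2/\e_n$. Your localized-bump correction $u_n = \tilde u_n + \xi_n\phi$ also works but forces you to control a gradient perturbation and a nontrivial linear term $\int (\tilde u_n - z_j)\cdot\xi_n\phi/\e_n$ rather than having the linear term vanish by design; both of you exploit the same essential point, namely that \eqref{H3} upgrades the naive Lipschitz bound $O(|\xi_n|/\e_n)$ to something summable. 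So this is a legitimate alternative route; the paper's choice is simply slicker.

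That said, your quantitative claim $\|\tilde u_n - u\|_{L^1} = O(\e_n)$ is wrong for this paper's construction. The transition layer in \Cref{lem:approxenergy} has width $\ell_n + 2\e_n$ with $\ell_n = \overline L\e_n^{5/8}$, and the paper's own estimate for the mass defect (in the proof of \Cref{thm:massconstr}) is $|\eta_n| \le C\e_n^{5/8}$, coming from $\|u - w_n\|_{L^1} \le \e_n$ together with $\mathcal{L}^N(\{v_n \neq w_n\}) \le C\e_n^{5/8}$. With the correct exponent your estimates still close, but not for the reason you state: for instance your decomposition of the potential cross term as ``$o(|\xi_n|/\e_n) = o(1)$'' implicitly uses $|\xi_n|/\e_n = O(1)$, which is false here; the term actually scales like $O(\e_n^{-3/8})\cdot O(\e_n^{5/8}) = O(\e_n^{1/4})$, which still vanishes. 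A second gap: you need the quantitative rate for the \emph{specific} recovery sequence constructed in \Cref{lem:approxenergy}, applied to the polyhedral approximations $w_n$ of \Cref{lem:approxregular}, not for an abstract recovery sequence from \Cref{thm:main} (whose statement gives no rate). The paper is careful to do the mass correction \emph{before} the diagonal argument, at the level where the rates are explicit; you should structure your argument the same way. Finally, the choice of cutoff: a density-one point of $\o_{i_0}$ does not by itself give a ball $B$ with $B\subset\o_{i_0}$, so either take $B$ inside the interior of one of the polyhedral pieces of $w_n$ (letting $B$ depend on $n$), or note that the contribution of $B\setminus\o_{i_0}$ is controlled by the same rates.
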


Using the results in \cite{KohSte}, we deduce the following corollary.

\begin{corollary}\label{cor:mass}
Under the assumptions of \Cref{thm:massconstr}, let $u_0 \in BV(\o;z_1,\dots,z_k)$ be an $L^1$-isolated local minimizer for $\f_0^{\mathcal{M}}$, namely there exists $\lambda>0$ such that
\[
\f_0^{\mathcal{M}}(u_0) < \f_0^{\mathcal{M}}(v)
\]
for all $v \in L^1(\o;\R^M)$ with $0< \| v - u_0 \|_{L^1(\o; \R^M)} < \lambda$.
Then there exists $\{u_n\}_{n\in\N}\subset H^1(\o;\R^M)$ where each $u_n$ is an $L^1$-local minimizer for $\f_n^{\mathcal{M}}$, such that $u_n\to u_0$ in $L^1(\o;\R^M)$.
\end{corollary}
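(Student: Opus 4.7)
The plan is to apply the classical Kohn--Sternberg localization strategy from \cite{KohSte}: replace the minimization of $\mathcal{F}_n^{\mathcal{M}}$ over all of $L^1(\o;\R^M)$ by a minimization over a small closed $L^1$-ball around the isolated local minimizer $u_0$, produce a minimizer $u_n$ for this restricted problem, and then use the equicoercivity and $\Gamma$-convergence of \Cref{thm:massconstr}, together with the strict isolatedness assumption, to show that $u_n \to u_0$ in $L^1$. Once this is established, each $u_n$ eventually lies strictly inside the ball, so the ball constraint becomes inactive and $u_n$ is an (unconstrained) $L^1$-local minimizer of $\mathcal{F}_n^{\mathcal{M}}$.

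More concretely, with $\lambda$ as in the statement I would set $\lambda' \coloneqq \lambda/2$ and consider, for each $n$, the constrained problem
\[
\min\bigl\{ \mathcal{F}_n^{\mathcal{M}}(v) : v\in L^1(\o;\R^M),\ \|v - u_0\|_{L^1(\o;\R^M)} \le \lambda' \bigr\}.
\]
The admissible class is non-empty because the $\Gamma$-$\limsup$ inequality of \Cref{thm:massconstr}(ii) supplies a recovery sequence $\tilde u_n$ with $\int_\o \tilde u_n\,dx = \mathcal{M}$, $\tilde u_n \to u_0$ in $L^1$, and $\mathcal{F}_n^{\mathcal{M}}(\tilde u_n) \to \mathcal{F}_0^{\mathcal{M}}(u_0)$; in particular $\tilde u_n$ eventually satisfies the ball constraint. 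Existence of a minimizer $u_n$ of the constrained problem for fixed $n$ then follows from the direct method applied in $H^1(\o;\R^M)$, using (\ref{H4}) together with the gradient term for coercivity, the compact embedding $H^1 \hookrightarrow L^1$ to pass to the limit in both the mass constraint and the $L^1$-ball constraint, and the standard lower semicontinuity of Modica--Mortola-type functionals with locally Lipschitz integrand. Since $\mathcal{F}_n^{\mathcal{M}}(u_n) \le \mathcal{F}_n^{\mathcal{M}}(\tilde u_n)$ is uniformly bounded, the compactness portion of \Cref{thm:massconstr}(i) yields, along a subsequence, a limit $u^* \in BV(\o;z_1,\dots,z_k)$ with $\int_\o u^*\,dx = \mathcal{M}$ and $\|u^* - u_0\|_{L^1} \le \lambda' < \lambda$. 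The $\Gamma$-$\liminf$ inequality then gives $\mathcal{F}_0^{\mathcal{M}}(u^*) \le \mathcal{F}_0^{\mathcal{M}}(u_0)$, and the isolatedness hypothesis forces $u^* = u_0$; since the limit is unique, the full sequence converges to $u_0$ in $L^1$. As soon as $\|u_n - u_0\|_{L^1} < \lambda'$, any $v$ with $\int_\o v\,dx = \mathcal{M}$ and $\|v - u_n\|_{L^1} < \lambda' - \|u_n - u_0\|_{L^1}$ is admissible for the constrained problem, so $\mathcal{F}_n^{\mathcal{M}}(u_n) \le \mathcal{F}_n^{\mathcal{M}}(v)$, which is exactly the desired local minimality.

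The main delicate point is arranging for the recovery sequence $\tilde u_n$ to satisfy the mass constraint $\int_\o \tilde u_n\,dx = \mathcal{M}$ \emph{exactly} rather than merely asymptotically, so that $\tilde u_n$ is admissible for the constrained problem and not just for $\mathcal{F}_n$. Everything else in the argument is a formal $\Gamma$-convergence plus equicoercivity plus isolatedness syllogism in the spirit of \cite{KohSte}, with no extra analytic input required beyond what is already developed in the proof of \Cref{thm:massconstr}. Since that proof already produces a mass-preserving recovery sequence, the present corollary is essentially an immediate consequence.
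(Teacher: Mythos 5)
Your proposal is correct and takes exactly the route the paper intends: the paper states the corollary with only the remark that it follows from \cite{KohSte} together with \Cref{thm:massconstr}, and your argument is a faithful (and complete) unpacking of that Kohn--Sternberg localization scheme, including the correct observation that the mass-preserving recovery sequence constructed in the proof of \Cref{thm:massconstr} is what makes the small-ball constrained problem nontrivial.
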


Next, we consider the case where a Dirichlet condition is imposed on the boundary of $\o$.

\begin{theorem}\label{thm:Dirichlet}
Let $W$ be given as in \emph{(\ref{H1})}--\emph{(\ref{H4})} and fix $g \in \Lip(\partial\o; \R^M)$ with
\[
\min\{ |z_i(x) - g(x)| : x\in \partial\o, i\in\{1,\dots,k\} \}\geq \delta.
\]
For $n \in \N$ define 
\[
\mathcal{F}^{D}_n(u)\coloneqq
\left\{
\begin{array}{ll}
\f_n(u) & \text{ if } u \in H^1(\o;\R^M)  \text{ with }  \mathrm{Tr}\,u = g  \text{ on } \partial\o,\\
& \\
\infty & \text{ otherwise in  }L^1(\o; \R^M).
\end{array}
\right.
\]
Then the followings hold:
\begin{itemize}
\item[$(i)$] if $\{u_n\}_{n\in\N} \subset H^1(\o; \R^M)$ is such that
\[
\sup \left\{ \mathcal{F}^{D}_n(u_n) : n \in \N \right\} < \infty,
\]
then, eventually extracting a subsequence (not relabeled), we have that $u_n \to u$ in $L^1(\o; \R^M)$, where $u \in BV(\o;z_1,\dots,z_k)$ is such that $\mathcal{F}^{D}_0(u) < \infty$. Here the functional $\mathcal{F}^{D}_0$ is defined via
\[
\mathcal{F}^{D}_0(u) \coloneqq \f_0(u) + \int_{\partial\o} d_W(x,\mathrm{Tr}\,u(x),g(x))\dhno(x).
\]
\item[$(ii)$] The sequence of functionals $\mathcal{F}^{D}_n$ $\Gamma$-converges with respect to the $L^1$-topology to $\f^D_0$.
\end{itemize}
\end{theorem}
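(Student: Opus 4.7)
The argument bootstraps from \Cref{thm:main}, treating the boundary contribution by slicing near $\partial\o$. For the compactness statement, the hypothesis $\sup_n \f^D_n(u_n) < \infty$ implies $\sup_n \f_n(u_n) < \infty$, so \Cref{thm:main} directly furnishes a subsequence $u_n \to u$ in $L^1$ with $u \in BV(\o; z_1, \dots, z_k)$ and $\f_0(u) < \infty$. Note that $\mathrm{Tr}\,u$ need not equal $g$, since phase transitions in $u_n$ can accumulate on $\partial\o$; the boundary integral in $\f_0^D$ is precisely what records this discrepancy.

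For the $\Gamma$-$\liminf$ inequality, I would fix $\rho > 0$, set $\o_\rho \coloneqq \{x \in \o : \dist(x, \partial\o) > \rho\}$, and decompose
\[
\f^D_n(u_n) = \f_n(u_n; \o_\rho) + \f_n(u_n; \o \setminus \overline{\o_\rho}).
\]
The first summand is handled by localizing \Cref{thm:main} on a Lipschitz exhaustion of $\o_\rho$, giving $\liminf_n \f_n(u_n; \o_\rho) \geq \f_0(u; \o_\rho) \nearrow \f_0(u)$ as $\rho \to 0$. For the second, introduce a tubular parametrization $\Phi \colon \partial\o \times (0, \rho) \to \o \setminus \overline{\o_\rho}$, apply Fubini, and use the pointwise inequality $\tfrac{1}{\e_n} W(x, u_n) + \e_n |\partial_s u_n|^2 \geq 2\sqrt{W(x, u_n)}|\partial_s u_n|$ slicewise. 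The slice integral is the length of $s \mapsto u_n(\Phi(\sigma, s))$ in the $s$-varying metric $2\sqrt{W(\Phi(\sigma, s), \cdot)}$; replacing this by the fixed metric $2\sqrt{W(\sigma, \cdot)}$ costs a Cauchy--Schwarz error that vanishes under a diagonal choice $\rho = \rho_n \to 0$ with $\rho_n^2/\e_n \to 0$. The resulting length lower bounds $d_W(\sigma, u_n(\Phi(\sigma, 0)), u_n(\Phi(\sigma, \rho_n)))$; trace-Fubini gives $u_n(\Phi(\sigma, 0)) = g(\sigma)$ for $\hno$-a.e.\@ $\sigma$, while the BV trace theorem applied to $u$, an averaging argument over $\rho$ in a small range, and the uniform continuity of $d_W$ furnished by the paper's preliminary geodesic analysis together yield $\liminf_n d_W(\sigma, g(\sigma), u_n(\Phi(\sigma, \rho_n))) \geq d_W(\sigma, g(\sigma), \mathrm{Tr}\,u(\sigma))$ for $\hno$-a.e.\@ $\sigma$. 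Integrating via Fatou and letting $\rho_n \to 0$ delivers the boundary integral.

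For the $\Gamma$-$\limsup$ inequality, I would take a recovery sequence $v_n$ for $u$ from \Cref{thm:main} and modify it in a boundary layer $\{\dist(\cdot, \partial\o) < \eta_n\}$ with $\eta_n \to 0$ and $\eta_n/\e_n \to \infty$. In an inner sublayer of thickness $O(\e_n)$ insert, on each slice $\sigma$, a rescaled near-geodesic for $d_W(\sigma, g(\sigma), z_{i(\sigma)}(\sigma))$ in the equipartition parametrization, so that $\tfrac{1}{\e_n}W + \e_n|\partial_s u_n|^2 = 2\sqrt{W}|\partial_s u_n|$ is asymptotically saturated; in a buffer sublayer set $u_n \equiv z_{i(\sigma)}(\sigma)$, which carries zero $W$-cost; then interpolate further in to $v_n$ at vanishing cost, exploiting that $v_n \to z_{i(\sigma)}(\sigma)$ on a.e.\@ slice near $\partial\o$. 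Outside the layer take $u_n = v_n$. The resulting sequence satisfies $u_n \in H^1(\o; \R^M)$, $\mathrm{Tr}\,u_n = g$, $u_n \to u$ in $L^1$, and $\limsup_n \f^D_n(u_n) \leq \f^D_0(u)$; standard density arguments (as required already in \Cref{thm:main}) reduce the construction to the case of $u$ with sufficiently regular interface.

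The main technical obstacle is the slicing step in the lower bound: the scaling $\rho_n^2/\e_n \to 0$ needed to suppress the Lipschitz error is coupled with the requirement that $u_n(\Phi(\sigma, \rho_n)) \to \mathrm{Tr}\,u(\sigma)$ for $\hno$-a.e.\@ $\sigma$, which does not follow automatically from $L^1$-convergence alone and must be secured by an averaging argument over a small window of $\rho$-values together with a diagonal extraction. The uniform continuity of $d_W$ in all its arguments supplied by the paper's preliminary geodesic analysis is essential for passing to the limit under the slice integral. The upper-bound construction, by contrast, is a routine adaptation of the equipartition ansatz already deployed in \Cref{thm:main}.
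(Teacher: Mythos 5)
Your route — slicing near $\partial\o$ through a tubular parametrization and treating the boundary contribution one-dimensionally — is genuinely different from the paper's, which instead extends the domain: take $\widetilde{\o}\supset\overline{\o}$ open and bounded with Lipschitz boundary, extend $g$ to a Lipschitz $\widetilde{g}$ on $\widetilde{\o}\setminus\o$, set $z_{k+1}\coloneqq\widetilde{g}$ as a new well there (extending $W$ and the $z_i$'s accordingly), define $\widetilde{u}\coloneqq u$ on $\o$ and $\widetilde{u}\coloneqq\widetilde{g}$ on $\widetilde{\o}\setminus\o$, and then run the interior argument of \Cref{thm:main} on $\widetilde{\o}$. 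The Dirichlet constraint on $u_n$ makes the extension-by-$\widetilde{g}$ an $H^1(\widetilde{\o})$ function, and the discrepancy between $\mathrm{Tr}\,u$ and $g$ becomes an interior jump of $\widetilde{u}$ along $\partial\o\subset\widetilde{\o}$; the boundary integral in $\f^D_0$ then drops out of the blow-up and recovery-sequence machinery already built, with essentially no new estimates. Your compactness step is the same; the $\Gamma$-inequalities are where the approaches diverge.

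Your slicing argument has two real gaps. The domain is only assumed to have a Lipschitz boundary, so the collar parametrization $\Phi\colon\partial\o\times(0,\rho)\to\o\setminus\overline{\o_\rho}$ on which your Fubini step rests does not exist: the inward normal is defined only $\hno$-a.e.\@ on $\partial\o$, the putative map $\sigma\mapsto\sigma-s\nu(\sigma)$ need not be injective, and it is certainly not a diffeomorphism. You would have to work in finitely many Lipschitz graph charts, where the slicing direction is oblique to the boundary and the coarea Jacobian becomes anisotropic — a substantial detour. Separately, the step you yourself flag as the main obstacle, namely securing $u_n(\Phi(\sigma,\rho_n))\to\mathrm{Tr}\,u(\sigma)$ for $\hno$-a.e.\@ $\sigma$ by ``averaging over a window of $\rho$-values together with a diagonal extraction,'' is not a bookkeeping matter: the averaging window must shrink slowly relative to the rate of $u_n\to u$ in $L^1$ (which you do not control), while $\rho_n$ must shrink fast enough that the Lipschitz-in-$x$ error in $\sqrt{W}$ vanishes, and it is not clear that a single diagonal can satisfy both constraints without additional structure. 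The paper's domain extension sidesteps both issues entirely, and is the right move here.
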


%%%%%%%%%%%%%%%%%%%%%%%%%%%%%%%%%%%%%%%%%%%%%%%%
%%%%%%%%%%%%%%%%%%%%%%%%%%%%%%%%%%%%%%%%%%%%%%%%
%%%%%%%%%%%%%%%%%%%%%%%%%%%%%%%%%%%%%%%%%%%%%%%%
%%%%%%%%%%%%%%%%%%%%%%%%%%%%%%%%%%%%%%%%%%%%%%%%

\subsection{Sketch of the strategy}

Despite the fact that the strategy we have to follow is clear, the path to the proof of the main result (\Cref{thm:main}) is studded with technical difficulties.

First of all, we comment on the compactness result.
For clarity of exposition, assume that $k = 2$, \emph{i.e.\@}, there are only two wells, namely $z_1, z_2$.
From the energy bound and Young's inequality we get
\[
\sup_{n \in \N} \int_\o 2\sqrt{W(x,u_n(x))} |\nabla u_n(x)| \,dx \leq \sup_{n\in\N} \f_n(u_n)  < \infty.
\]
In the case where $W$ is independent of $x$, and thus $z_1, z_2$ are constant, the proof originally proposed by Modica in \cite{Mod_Gra} (see also \cite{FonTar}) proceeds as follows: it can be checked that
\begin{equation}\label{eq:cmpt}
\sup_{n\in\N}|D(w\circ u_n)|(\o) \leq \sup_{n\in\N} \int_\o 2\sqrt{W(x,u_n(x))} |\nabla u_n(x)| \,dx,
\end{equation}
where $w(p)\coloneqq d_W(p,z_1)$, and therefore the $BV$-compactness implies that $w \circ u_n \to w \circ u$ in $L^1(\o)$, where $w\circ u \in BV(\o)$. From this, one can then deduce that $u \in BV(\o)$ and that it only takes the values $z_1, z_2$.
In our case, since $W$ depends on $x$, instead of \eqref{eq:cmpt} we get
\[
\sup_{n\in\N} \int_\o \left| \nabla_y g_n(x,x)  \right|\,dx \le \sup_{n\in\N} \int_\o 2\sqrt{W(x,u_n(x))} |\nabla u_n(x)| \,dx,
\]
where
\[
g_n(x,y) \coloneqq d_W \left(x,z_1(x),u_n(y)\right).
\]
Therefore, in order to apply $BV$-compactness for the sequence of functions $\{g_n\}_{n\in\N}$, we need a control on the other derivatives as well. Notice that this does \emph{not} come from the energy bound, and is achieved by showing that the function $x \mapsto d_{W}(x,p,q)$ is Lipschitz continuous for every $p,q \in \R^M$ (see \Cref{dWisLipsc}).
We prove this by first deriving a uniform upper bound on the Euclidean length of minimizing geodesics for our degenerate metric (see \Cref{dFprop}); we discuss this at the end of this section.

Here a note is a must. To the best of our knowledge, the strategy that we summarized above is \emph{the} way to get compactness for this kind of problems. Indeed, in every papers that treated the issue (see, for example, \cite{BarFon, FonPop, FonTar}), suitable assumptions are required in order to use the argument described above. 

We remark that in \cite{Bal} it was assumed that (for a potential $W$ independent of $x$)
\begin{equation}
\label{BaldoW}
W(p) \ge \sup_K W
\end{equation}
for every $p \not \in K$, where $K \coloneqq [k_1, k_2]^M$. Since $W$ is continuous, (\ref{BaldoW}) implies that $W$ is constant on $\partial K$, which is a rather restrictive assumption on $W$.
Moreover, since for $M > 1$ the space $H^1(\o; \R^M)$ is not closed under truncation, we instead replace (\ref{BaldoW}) with (\ref{H4}) and consider projections rather than truncations in order to reduce to a sequence $\{u_n\}_{n\in\N}$ bounded in $L^\infty(\o;\R^M)$ (see Step 2 in \Cref{prop:cpt}).

The strategy we use to prove the liminf inequality is the blow-up method introduced by Fonseca and M\"{u}ller in \cite{FonMul_BV}.
To summarize the argument it is not restrictive to assume that $\o = Q \subset \R^N$ is the unit cube with faces orthogonal to the coordinate axes and that $u \in BV(\o; \R^M)$ is defined via
\[
u(x) \coloneqq 
\left\{
\begin{array}{ll}
z_1(x) & \text{ if } x_N < 0,\\
z_2(x) & \text{ if } x_N \ge 0.
\end{array}
\right.
\]
Let $\{\rho_m\}_{m\in\N} \subset (0, 1)$ be such that $\rho_m \to 0$, and consider the rescaled cubes $Q_{\rho_m} \coloneqq \rho_m \overline{Q}$.
Let $\{u_n\}_{n \in \N}$ be a sequence of functions in $H^1(\o;\R^M)$ such that $u_n \to u$ in $L^1(\o;\R^M)$.
For the sake of the argument, assume in addition that $u_n(x) = z_1(x)$ if $x_N = -\rho_m/2$, and that $u_n(x) = z_2(x)$ if $x_N = \rho_m/2$. Our aim is to show that
\begin{equation}
\label{liminfsketch}
\lim_{m\to\infty} \liminf_{n\to\infty} \frac{1}{\rho_m^{N-1}} \int_{Q_{\rho_m}} \left[ \frac{1}{\e_n} W(x,u_n(x)) + \e_n|\nabla u_n(x)|^2
		\right]\,dx \ge d_W(0, z_1(0), z_2(0)).
\end{equation}
Since the map $x \mapsto W(x,p)$ is continuous, by an application of Young's inequality and Tonelli's theorem one expects to obtain
\begin{align*}
&\frac{1}{\rho_m^{N-1}} \int_{Q_{\rho_m}} \left[ \frac{1}{\e_n} W(x,u_n(x))
 	+ \e_n|\nabla u_n(x)|^2 \right]\,dx \\
&\hspace{3cm} \ge \frac{1}{\rho_m^{N-1}} \int_{Q_{\rho_m}} 2\sqrt{W(x,u_n(x))}|\nabla u_n(x)\cdot e_N| \,dx \\
&\hspace{3cm} \sim \int_{-1}^1 2
	\sqrt{W(0,u_n(0',t))}|\nabla u_n(0',t)\cdot e_N| \,dt \\
&\hspace{3cm} \ge d_W(0,z_1(0),z_2(0)),
\end{align*}
where in the previous to last line we used the notation $(0',t) = (0,\dots,0,t)\in\R^N$, for $t\in\R$. One possible way make this heuristics rigorous is the following:
\begin{align*}
&\frac{1}{\rho_m^{N-1}} \int_{Q_{\rho_m}} \left[ \frac{1}{\e_n} W(x,u_n(x))
 	+ \e_n|\nabla u_n(x)|^2 \right]\,dx \\
&\hspace{3cm}\geq\frac{2}{\rho_m^{N-1}}\int_{Q'_m}\int_{-1}^{1}
	\sqrt{W((x',\rho_m s),\widetilde{u}_n(x',s))}
    |\nabla \widetilde{u}_n(x', s)\cdot e_N| \,d s \,dx'
\end{align*}
where $Q'_{\rho_m} \coloneqq \{x' : (x',0) \in Q_{\rho_m}\}$ and $\widetilde{u}_n(x',s) \coloneqq u_n(x', \rho_m s)$.
To conclude, one would need to prove that the function
\begin{equation}
\label{nongeometric}
x'\mapsto \inf \left\{ \int_{-1}^1 2\sqrt{W((x',\rho_m s), \gamma(s))}|\gamma'(s)|\,d s : \gamma \in \mathcal{A}(z_1(0),z_2(0)) \right\}
\end{equation}
is continuous.
Notice that the minimization problem on the right-hand side of (\ref{nongeometric}) is significantly different from the geodesic problem (\ref{eq:infpbdw}), in that in (\ref{nongeometric}) the conformal factor depends also on the variable of integration $s$.
One way to prove the continuity is to show that curves solving that minimization problem have uniformly finite Euclidean length (or at least that there exists one such curve enjoying this property) and then exploit the Lipschitz continuity of $W$.
However, in the present work we choose to reason as follows. Define
\[
F_m(p) \coloneqq \min\left\{2\sqrt{W(x,p)} : x \in Q_{\rho_m}\right\}.
\]
Then one can show that
\[
\frac{1}{\rho_m^{N-1}} \int_{Q_{\rho_m}} \left[ \frac{1}{\e_n} W(x,u_n(x))
 	+ \e_n|\nabla u_n(x)|^2 \right]\,dx 
\ge d_{F_m}(z_1(0),z_2(0)),
\]
where
\begin{equation}\label{eq:dFm}
d_{F_m}(p,q) \coloneqq \inf \left\{\int_{-1}^1F_m(\gamma(t))|\gamma'(t)|\,dt : \gamma \in \mathcal{A}(p,q) \right\}.
\end{equation}
With this in hand, to conclude (see (\ref{liminfsketch})) it is enough to show that
\[
\lim_{m\to\infty}d_{F_m}(z_1(0),z_2(0))= d_W(0,z_1(0),z_2(0)).
\]
Notice that the function $F_m$ vanishes on the set
\[
Z= \bigcup_{i=1}^k z_i(Q_{r_m}).
\]
In view of (\ref{H2}), we can assume $m$ large enough so that the sets $z_i(Q_{r_m})$ are pairwise disjoint.
Let us remark that the advantage to work with (\ref{eq:dFm}) instead of (\ref{nongeometric}) is that the latter is a purely geometric problem, \emph{i.e.\@}, the functional that we aim at minimizing does not depend on the specific choice of the parametrization.
We are able to prove an \emph{explicit} upper bound on the Euclidean length of certain solutions to the minimization problem in \eqref{eq:dFm} (see \Cref{dFprop}).
Furthermore, since the only property of $Z$ that needed in the proof is that it is the union of the images of a compact convex set through the $z_i$'s, the argument also works for the case where $Z = \{ z_1(x),\dots, z_k(x)\}$ for some $x \in \o$.
The strategy we use is the following.
First of all, we show that the specific behavior of the potential in a neighborhood of the wells yields
\begin{equation}\label{eq:Fm}
F_m(z) = 2\sqrt{\alpha_i}d_i(z)
\end{equation}
if $z\in\R^M$ is sufficiently close to $Z$, where $d_i(z)$ denotes the distance between $z$ and $z_i(Q_{\rho_m})$.
Given $p, q \in \R^M$ we want to show that solutions to \eqref{eq:dFm} have uniformly finite Euclidean length.
We only discuss the case where  $p$ and $q$ belong to a neighborhood of $z_i(Q_{\rho_m})$ for some $i\in\{1,\dots,k\}$ since the general case will be obtained by using the upper bound for each $i\in\{1,\dots, k\}$ and (\ref{H4}) to get an upper bound of the length of geodesics outside of these neighborhoods.
We consider three cases:
\begin{itemize}
\item[$(i)$] If $p, q \in z_i(Q_{\rho_m})$, then a minimizing geodesic is simply given by the image through $z_i$ of a segment connecting two points in $z_i^{-1}(\{p\})$ and $z_i^{-1}(\{q\})$ respectively;
\item[$(ii)$] If $p \in z_i(Q_{\rho_m})$ and $q\not \in z_i(Q_{\rho_m})$, let us denote by $q'$ one projection of $q$ on $z_i(Q_{\rho_m})$.
Then the curves obtained by first connecting $q$ and $q'$ with a segment and then $q'$ and $p$ with a curve in $z_i(Q_{\rho_m})$ is a solution to the minimization problem in \eqref{eq:dFm}. The proof of this uses the co-area formula and that each curve connecting $p$ and $q$ must traverse every level set of $d_i$ lower than $d_i(q)$.
This latter fact follows by using \eqref{eq:Fm};
\item[$(iii)$] If $p, q \not\in z_i(Q_{\rho_m})$ we are able to prove the existence of a minimizing geodesic $\gamma \in \mathcal{A}(p,q)$ with the property that
\[
L(\gamma) \leq |p-p'|+|q-q'|+\Lip(z_i) \mathrm{diam}(Q_{\rho_m}).
\]
%then we are able to construct a curve $\widetilde{\gamma}\in\mathcal{A}(p,q)$ such that
%\[
%\int_{-1}^1F_m(\widetilde{\gamma}(t)) |\widetilde{\gamma}'(t)|\,dt \le
%\int_{-1}^1F_m(\gamma(t))|\gamma'(t)|\,dt.
%\]
Here $L(\gamma)$ is the Euclidean length of $\gamma$, and $p'$ and $q'$ denote projections on $z_i(Q_{\rho_m})$ of $p$ and $q$, respectively.
\end{itemize}

Next, we would like to comment on a hypothesis used to get the liminf inequality in the work \cite{Ish_W} by Ishige.
There the author considered potentials $W \colon \o \times \R^M \to [0, \infty)$ such that, for all $x \in \o$, $W(x,p) = 0$ if and only if $p \in\{ \alpha,\beta \}$, for \emph{fixed} $\alpha, \beta \in \R^M$ and satisfy the following property: for each $\lambda_1 > 0$ there exists $\lambda_2 > 0$ such that for all $p\in\R^M$ it holds
\begin{equation}\label{Ishige}
\left|\sqrt{W(x,p)} - \sqrt{W(y,p)}\right| \le \lambda_1 \sqrt{W(x,p)}
\end{equation}
whenever $|x-y| \le \lambda_2$.
As remarked in \cite{Ish_W}, \eqref{Ishige} is satisfied if, for example, $W(x,p) = h(x) U(p)$, with $h > 0$.
We notice that assumption \eqref{Ishige} does not hold even in the simple case of a single moving well. For this reason one cannot immediately adapt the proof in \cite{Ish_W} to our case. \\

The construction of the recovery sequence is carried out as follows. Thanks to Lemma \ref{lem:approxregular} we can assume
\[
u = \sum_{i=1}^k z_i\ca_{\o^i}
\]
where $\{\o_1,\dots,\o_k\}$ is a Caccioppoli partition of $\o$ and $\partial\o^i\cap\o$ is contained in a finite union of hyperplanes, for each $i=1,\dots,k$.
For the sake of exposition, we just discuss how to build the recovery sequence in a neighborhood of a connected component $\Sigma$ of $\partial \o^i \cap \o$ contained in an hyperplane.
Without loss of generality, we can assume that $\Sigma\subset \{x_N=0\}$.
The quantity we want to approximate is
\[
\int_\Sigma d_W(x,u^-(x),u^+(x)) \dhno(x).
\]
To fix the ideas, let us assume that $u^-(x)=z_1(x)$, and $u^+(x)=z_2(x)$ for all $x\in\Sigma$.
Consider a grid of $(N-1)$-dimensional cubes $Q'(\overline{y}_i,r_n)\subset\{ x_N=0 \}\sim\R^{N-1}$ of side length $r_n>0$ and centre
$\overline{y}_i\in\{ x_N=0 \}$.
Identify a point $x\in\R^N$ with the pair $(y,t)$ where $y\in\R^{N-1}$ and $t\in\R$.
Since the map $x\mapsto d_W(x, z_1(x), z_2(x))$ is continuous (see \Cref{dWisLipsc}), it is enough to approximate
\[
\sum_i d_W(\overline{y}_i,z_1(\overline{y}_i),z_2(\overline{y}_i))
	\hno(Q'(\overline{y}_i,r_n)).
\]
The advantage of considering this discretization is the following:
for each $(y,t)\in Q'(\overline{y}_i,r_n) \times (0,\tau_n)$, for some $\tau_n>0$ with $\tau_n\to0$ as $n\to\infty$, we can simply consider a suitable reparametrization of a geodesic $\gamma_i\in\mathcal{A}( z_1(\overline{y}_i),z_2(\overline{y}_i) )$ for $d_W$, instead of taking a different geodesic for each $x\in Q'(\overline{y}_i,r_n)$.
This comes at the cost of having to perform two gluing constructions in order for the function we define to have the required regularity $H^1(\o;\R^M)$. The first one is to use cut-off functions to transition between the geodesics considered in each adjacent cube. The second one is to match the value $z_i(\overline{y}_i)$ with $z_i(y,\tau_n)$. This will be done by using a linear interpolation. The technical difficulty is to show that the energy contribution of these gluing constructions is asymptotically negligible.

%%%%%%%%%%%%%%%%%%%%%%%%%%%%%%%%%%%%%%%%%%%%%%%%
%%%%%%%%%%%%%%%%%%%%%%%%%%%%%%%%%%%%%%%%%%%%%%%%
%%%%%%%%%%%%%%%%%%%%%%%%%%%%%%%%%%%%%%%%%%%%%%%%
%%%%%%%%%%%%%%%%%%%%%%%%%%%%%%%%%%%%%%%%%%%%%%%%

\subsection{A discussion on the assumptions}

It is immediate to verify that in view of (\ref{H1}), (\ref{H3}), and (\ref{H4}), there exists a positive number $\eta$ such that
\begin{equation}
\label{Hold4}
W(x,p) \ge \eta \quad \text{ for all } (x, p) \in \o \times \left( \R^M \setminus \bigcup_{i = 1}^kB(z_i(x),r/2)\right).
\end{equation}
Let us mention here that while assumption (\ref{H4}) is required in order to obtain compactness of sequences with uniformly bounded energies, it is possible to prove the $\Gamma$-convergence results of \Cref{statementMR} by assuming  (\ref{H1})--(\ref{H3}), and \eqref{Hold4} in place of  (\ref{H4}).
We refer the reader to \Cref{remH4} for more details.\\

Finally, we notice that \eqref{H2}, \eqref{H3}, and \eqref{Hold4} are only needed in oder to obtain the results of \Cref{sec:tech} (see \Cref{dFprop} and \Cref{dWisLipsc}). If the results of \Cref{sec:tech} could be obtained with weaker assumptions than \eqref{H2} and \eqref{H3}, then the statements and the proofs of the main results would require a few adjustments, as we explain below.
First of all, we notice that \eqref{H2}, \eqref{H3}, and \eqref{Hold4} imply that
\[
d_W(x, z_i(x), z_j(x)) \geq C > 0
\]
for all $i \neq j \in {1, \dots ,k}$ and $x \in \o$.
Therefore, the functional $\widetilde{\f_0} \colon L^1(\o;\R^M)\to[0,+\infty]$ defined as 
\[
\widetilde{\f_0}(u) \coloneqq
\left\{
\begin{array}{ll}
\displaystyle \int_{J_u} d_W(x, u^+(x), u^-(x))\,d\hno(x) & \text{ if } u(x) \in \{z_1(x), \dots, z_k(x)\} \text{ for } \mathcal{L}^N\text{-a.e.} x \in \o, \\
&\\
\infty & \text{ otherwise in } L^1(\o;\R^M),
\end{array}
\right.
\]
is finite only if $u\in BV(\o;z_1,\dots,z_k)$ with $\hno(J_u)<\infty$. In particular, it coincides with $\f_0$ (see \Cref{f0}).
Notice that $\widetilde{\f_0}$ is well defined since $J_u$ is countably $\hno$-rectifiable for all $u \in L^1_{\mathrm{loc}}(\o;\R^M)$ (see \cite{Del}).
On the other hand, if (\ref{H2}) does not hold, \emph{i.e.\@}, if
\[
\min \left\{ |z_i(x) - z_j(x)| : x \in \overline{\o}, i \neq j  \right\} = 0,
\]
then there could exist $u \in L^1(\o; \R^M)$ such that $\widetilde{\f_0}(u) < \infty$, but $u$ is \emph{not} of bounded variation, as the following remark shows.

\begin{remark}\label{rem:counterexample}
Take $N = M = k = 2$, $\o = (-1, 1)^2$, $W(x, p) \coloneqq |p - z_1(x)|^2 |p - z_2(x)|^2$, where
\[
z_1(x_1,x_2) \coloneqq (x_1,0),\quad\quad\quad
z_2(x_1,x_2) \coloneqq
\left\{
\begin{array}{ll}
(x_1,x_1^2) & \text{ if } x_1 \ge 0,\\
(x_1,0) & \text{ if } x_1 < 0.
\end{array}
\right.
\]
Notice that $W$ satisfies (\ref{H1}), (\ref{H3}), and (\ref{H4}), but not (\ref{H2}). Let $f(t) \coloneqq \sin \left(t^{-2}\right)$, and consider the function
\[
u(x_1,x_2)\coloneqq
\left\{
\begin{array}{ll}
z_1(x) & \text{ if } x_2<f(x_1),\\
z_2(x) & \text{ if } x_2\ge f(x_1).\\
\end{array}
\right.
\]
Fix $x = (x_1,x_2) \in \o$ with $x_1 > 0$ and let $\gamma \colon [-1, 1] \to \R^2$ be the curve given by 
\[
\gamma(t) \coloneqq \left(x_1,\frac{t+1}{2} x_1^2\right).
\]
As one can readily check, $\gamma \in \mathcal{A}(z_1(x),z_2(x))$ and
\[
d_W(x, z_1(x), z_2(x)) \le \int_{-1}^12\sqrt{W(x,\gamma(t))}|\gamma'(t)|\,dt \le x_1^6.
\]
Therefore, by means of a direct computation we see that 
\[
\widetilde{\f_0}(u) \le \int_0^1 x_1^6 \sqrt{1 + |f'(x_1)|^2}\,dx_1 < \infty,
\]
while on the other hand we have
\begin{align*}
\int_{J_u} |z_1(x) - z_2(x)|\,d\mathcal{H}^1(x) & = \int_0^1 x_1^2 \sqrt{1+ |f'(x_1)|^2} \,dx_1 \\
& \ge \int_0^1 x_1^2 |f'(x_1)| \,dx_1 = \int_1^{\infty}\frac{|\cos(t)|}{t}\,dt  \ge \sum_{n = 1}^{\infty}\int_{\frac{\pi}{4} + 2\pi n}^{{\frac{3\pi}{4} + 2\pi n}}\frac{1}{2t}\,dt = \infty.
\end{align*}
Consequently, $\widetilde{\f_0}(u) < \infty$, but $u$ is not of bounded variation. Moreover, notice that the jump set of the function $u$ is not the boundary of a partition of $\o$.
\end{remark}

\begin{theorem}\label{noH2H3}
Let $W$ be given as in \emph{(\ref{H1})} and \emph{(\ref{H4})}, and assume that the conclusions of \Cref{dFprop} and \Cref{dWisLipsc} hold true. Then the following hold:
\begin{itemize}
\item[$(i)$] if $\{u_n\}_{n \in \N} \subset H^1(\o; \R^M)$ is such that
\[
\sup \left \{\f_n(u_n) : n \in \N \right\} < \infty,
\]
then, eventually extracting a subsequence (which we do not relabel), we have that $u_n \to u$ in $L^1(\o;\R^M)$, where $u$ is such that $u(x) \in \{z_1(x), \dots, z_k(x)\}$ for $\mathcal{L}^N$-a.e.\@ $x \in \o$, and $\widetilde{\f_0}(u) < \infty$.
\item[$(ii)$] The sequence of functionals $\f_n$ $\Gamma$-converges with respect to the $L^1$-topology to $\widetilde{\f_0}$.
\end{itemize}
\end{theorem}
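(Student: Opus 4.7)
The plan is to re-examine the proof of \Cref{thm:main} and audit each step for its dependence on \eqref{H2} and \eqref{H3}; as remarked in the discussion preceding the statement, these assumptions are used only to establish \Cref{dFprop} and \Cref{dWisLipsc}. Once those conclusions are taken as hypotheses, the three ingredients (compactness, $\Gamma$-liminf, $\Gamma$-limsup) can be reproduced, and the only genuinely new point is that the limiting functional is $\widetilde{\f_0}$, which unlike $\f_0$ may be finite on functions that are not $BV$.

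\emph{Compactness.} First I would carry out the projection/truncation reduction in Step~2 of \Cref{prop:cpt}, which only uses \eqref{H1} and \eqref{H4}, to reduce to a uniformly $L^\infty$-bounded sequence. Then, for each $i\in\{1,\dots,k\}$, set $\psi_i(x,p) \coloneqq d_W(x,z_i(x),p)$. The assumed conclusion of \Cref{dWisLipsc} gives that $\psi_i$ is Lipschitz in $x$, while a standard variational argument yields $|\nabla_p\psi_i(x,p)| \le 2\sqrt{W(x,p)}$. Applying the chain rule together with Young's inequality then produces a uniform bound
\[
|D(\psi_i\circ u_n)|(\o) \le C\bigl(1+\f_n(u_n)\bigr),
\]
so that along a common subsequence all $\psi_i\circ u_n$ converge in $L^1(\o)$. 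Since the energy bound forces $\int_\o W(x,u_n)\,dx \to 0$, $u_n(x)$ concentrates a.e.\ on the finite set $\{z_1(x),\dots,z_k(x)\}$; the limits of the $k$ distance functions $\psi_i\circ u_n$ then determine which well is selected at a.e.\ $x$, giving an $L^1$-limit $u$ with $u(x)\in\{z_1(x),\dots,z_k(x)\}$ a.e. Finiteness of $\widetilde{\f_0}(u)$ follows from the $\Gamma$-liminf step.

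\emph{Liminf inequality.} The blow-up argument sketched in the strategy section goes through verbatim, since it only uses the Lipschitz continuity of $x\mapsto d_W(x,p,q)$ (our hypothesis) and the convergence $d_{F_m}(p,q)\to d_W(x_0,p,q)$, which in turn relies only on \Cref{dFprop} (again assumed). The resulting lower bound is $\widetilde{\f_0}(u)$; note that it does not matter whether $J_u$ is the boundary of a Caccioppoli partition, since $J_u$ is $\hno$-rectifiable for any $L^1_{\mathrm{loc}}$ function (as used in the definition of $\widetilde{\f_0}$).

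\emph{Recovery sequence.} This is where I expect the main obstacle: the construction of \Cref{thm:main} relies on approximating $u$ by functions of the form $\sum_i z_i\ca_{\o^i}$ with $\{\o^i\}$ a Caccioppoli partition whose interfaces are polyhedral, and in the present generality $u$ need not be $BV$ (cf.\ \Cref{rem:counterexample}). My plan is to interpose an approximation step: for $\delta>0$ small, identify any two wells $z_i(x), z_j(x)$ with $|z_i(x)-z_j(x)|\le\delta$ and let $u^\delta$ be a measurable selection of a representative, so that on the support of $u^\delta$ the remaining wells satisfy an analogue of \eqref{H2}. Using the Lipschitz continuity of $d_W$ one checks $u^\delta\to u$ in $L^1$ and, by dominated convergence on $J_u$, $\widetilde{\f_0}(u^\delta)\to\widetilde{\f_0}(u)$ as $\delta\to0^+$. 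For each fixed $\delta$, the construction of \Cref{thm:main} (geodesic reparametrization on a fine grid of cubes, followed by gluing via cut-offs and linear interpolation to match $z_i$ at height $\tau_n$) produces a sequence $\{u_n^\delta\}\subset H^1(\o;\R^M)$ with $\f_n(u_n^\delta)\to\widetilde{\f_0}(u^\delta)$. A standard diagonalization in $\delta$ and $n$, together with the metrizability of $\Gamma$-convergence on sublevel sets, then yields the desired recovery sequence for $u$.
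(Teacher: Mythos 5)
Your outline of the compactness step and the $\Gamma$-liminf is broadly aligned with the paper, modulo one point you pass over too quickly. Without \eqref{H2}, distinct indices $j$ may satisfy $z_j(x_0) = z_{i_1}(x_0)$, so the slicing estimate that in the paper's \Cref{prop:liminf} uses the global separation constant $\delta$ must be replaced by the local quantity $\delta_1(x_0) = \min\{|z_{i_1}(x_0)-z_j(x_0)| : z_j(x_0) \neq z_{i_1}(x_0)\}$, and the sums must be restricted to the index classes $\mathcal{J}_1,\mathcal{J}_2$ of wells coinciding at $x_0$. Saying the blow-up argument ``goes through verbatim'' hides this; it is a small fix, but it is a real one.

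The genuine gap is in the recovery sequence. Your plan is to merge wells that are $\delta$-close, pick a representative, call the result $u^\delta$, and assert both that \Cref{lem:approxregular}--\ref{lem:approxenergy} apply to $u^\delta$ and that $\widetilde{\f_0}(u^\delta)\to\widetilde{\f_0}(u)$ by dominated convergence on $J_u$. Neither assertion is justified. First, to apply \Cref{lem:approxregular} you need $u^\delta \in BV(\o;z_1,\dots,z_k)$, i.e.\ the merged partition must be Caccioppoli; but the sets $\{x : |z_i(x)-z_j(x)|\le\delta\}$ need not have finite perimeter for a generic $\delta$. Second, $J_{u^\delta}$ is \emph{not} contained in $J_u$: merging creates a new interface along $\{|z_i-z_j|=\delta\}$, so integrating only over $J_u$ does not see the extra energy this interface contributes, and dominated convergence on $J_u$ proves nothing about it. The paper's \Cref{lem:approxsofp} (adapted from Bouchitt\'e) repairs exactly these two problems by a co-area argument: since $t\mapsto\hno(\partial^*E^t_{ij})$ is in $L^1$ for $E^t_{ij}=\{0<|z_i-z_j|\le t\}$, one can pick levels $t_n\searrow 0$ for which each $\partial^*E^{t_n}_{ij}$ has finite $\hno$ measure (giving a genuine Caccioppoli partition), with the additional property $t_n\,\hno(\partial^*E^{t_n}_{ij})\to 0$; combined with $d_W(x,z_i(x),z_j(x))\le C\,|z_i(x)-z_j(x)|$, this controls the extra energy from the newly created interface and ensures $\widetilde{\f_0}(u_n)\to\widetilde{\f_0}(u)$. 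You would also need the nested definition of the merged cells $\o^i_n$ the paper uses, since ``identify any two wells with $|z_i-z_j|\le\delta$'' is ambiguous when three or more wells coalesce and the relation you describe is not transitive. Without the co-area mechanism and a precise merging rule, your $u^\delta$ is neither known to be $BV$ nor known to have $\widetilde{\f_0}(u^\delta)$ close to $\widetilde{\f_0}(u)$.
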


%%%%%%%%%%%%%%%%%%%%%%%%%%%%%%%%%%%%%%%%%%%%%%%%
%%%%%%%%%%%%%%%%%%%%%%%%%%%%%%%%%%%%%%%%%%%%%%%%
%%%%%%%%%%%%%%%%%%%%%%%%%%%%%%%%%%%%%%%%%%%%%%%%
%%%%%%%%%%%%%%%%%%%%%%%%%%%%%%%%%%%%%%%%%%%%%%%%

\subsection{Outline of the paper}

The paper is organized as follows.
In \Cref{sec:preliminaries} we introduce the notation and we recall the definitions of the mathematical objects we will need for our analysis.
The Lipschitz continuity of the function $x\mapsto d_W(x,z_i(x), z_j(x))$ is shown in \Cref{dWisLipsc}. The proof makes use of a result obtained in the first part of \Cref{sec:tech}, namely the fact that geodesics for $d_W$ (and also for more degenerate conformal factors) joining two points in a compact subset of $\R^M$ have uniformly bounded Euclidean lenght (see \Cref{dFprop}).
 The proof of \Cref{thm:main} is divided in three parts: in \Cref{sec:cmpt} we prove the compactness result, while \Cref{sec:liminf} and \Cref{sec:limsup} are devoted at obtaining the liminf and the limsup inequality, respectively.
Finally, in \Cref{sec:others} we discuss how to suitably modify the arguments we used to prove \Cref{thm:main} in order to obtain \Cref{thm:massconstr}, \Cref{thm:Dirichlet}, and \Cref{noH2H3}.

%%%%%%%%%%%%%%%%%%%%%%%%%%%%%%%%%%%%%%%%%%%%%%%%
%%%%%%%%%%%%%%%%%%%%%%%%%%%%%%%%%%%%%%%%%%%%%%%%
%%%%%%%%%%%%%%%%%%%%%%%%%%%%%%%%%%%%%%%%%%%%%%%%
%%%%%%%%%%%%%%%%%%%%%%%%%%%%%%%%%%%%%%%%%%%%%%%%

\section{Preliminaries}\label{sec:preliminaries}
For the convenience of the reader, in this section we collect a few definitions and tools used throughout the paper.

\subsection{Radon measures} 
Let $\mathcal{M}(\o)$ be the space of finite Radon measures on $\o$. We recall that in view of the Riesz representation theorem (see, for example, \cite[Theorem 1.200]{FL}), if we denote by $C_0(\o)$ the completion with respect to the $L^\infty$ norm of the space of continuous functions with compact support in $\o$, then the dual of $C_0(\o)$ can be identified with $\mathcal{M}(\o)$. The subset of $\mathcal{M}(\o)$ consisting of all finite nonnegative Radon measures on $\o$ will be denoted by $\mathcal{M}^+(\o)$. For the sake of brevity, the results of this section are stated in the form that will be used in the paper; for this reason we refer the reader to the monographs \cite{EG} and \cite{FL} for a more detailed treatment of these topics. 

\begin{definition}
We say that a sequence $\{\mu_n\}_{n\in\N}\subset\mathcal{M}^+(\o)$ \emph{weakly-$*$ converges} to
$\mu\in \mathcal{M}^+(\o)$, and we write $\mu_n\stackrel{*}{\rightharpoonup}\mu$, if
\[
\lim_{n\to\infty}\int_\o \varphi \,d\mu_n\to\int_\o \varphi \,d\mu
\]
for all $\varphi\in C_0(\o)$. 
\end{definition}

The first result of this section gives a simple criterion for weak-$*$ compactness of measures. For a proof see \cite[Proposition 1.202]{FL}.

\begin{theorem}\label{thm:wscmpt}
Let $\{\mu_n\}_{n \in \N} \subset \mathcal{M}^+(\o)$ be a sequence of finite nonnegative Radon measures such that
\[
\sup\left\{\mu_n(\o) : n \in \N \right\} < \infty.
\]
Then there exist a subsequence (not relabeled) and a measure $\mu \in \mathcal{M}^+(\o)$ such that $\mu_n \stackrel{*}{\rightharpoonup}\mu$.
\end{theorem}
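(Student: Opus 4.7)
The plan is to deduce this from the sequential Banach--Alaoglu theorem applied to the dual of $C_0(\o)$. First I would identify, via the Riesz representation theorem recalled above, each $\mu_n$ with the bounded linear functional $L_n \in C_0(\o)^*$ defined by $L_n(\varphi) \coloneqq \int_\o \varphi \,d\mu_n$. Because $\mu_n \in \mathcal{M}^+(\o)$, its dual norm coincides with the total mass $\mu_n(\o)$, so the standing hypothesis $\sup_n \mu_n(\o) < \infty$ translates into $\sup_n \|L_n\|_{C_0(\o)^*} < \infty$.

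Next I would invoke the separability of $C_0(\o)$: since $\o \subset \R^N$ is second countable, one obtains a countable dense subset of $C_c(\o)$ in the sup norm (for instance polynomials with rational coefficients on an exhausting sequence of compact subsets, multiplied by appropriate cut-offs), and this remains dense in the completion $C_0(\o)$. A bounded sequence in the dual of a separable Banach space admits a weak-$*$ convergent subsequence, so there exist a (not relabeled) subsequence and an element $L \in C_0(\o)^*$ with $L_n(\varphi) \to L(\varphi)$ for every $\varphi \in C_0(\o)$. A further application of Riesz representation gives a finite signed Radon measure $\mu \in \mathcal{M}(\o)$ satisfying $L(\varphi) = \int_\o \varphi \,d\mu$ for all $\varphi \in C_0(\o)$.

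It remains to check that $\mu \in \mathcal{M}^+(\o)$. Given any nonnegative $\varphi \in C_0(\o)$, passing to the limit in $\int_\o \varphi \,d\mu_n \ge 0$ yields $\int_\o \varphi \,d\mu \ge 0$; since this holds for every nonnegative test function, $\mu$ is nonnegative, and by construction $\mu_n \stackrel{*}{\rightharpoonup} \mu$. The only nonroutine point in the whole argument is the separability of $C_0(\o)$, which is what allows the use of the sequential form of Banach--Alaoglu; the rest of the proof is a direct duality computation, and the nonnegativity of the limit follows at once by testing against nonnegative $\varphi$.
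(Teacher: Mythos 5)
Your argument is correct and is precisely the standard proof of weak-$*$ sequential compactness for bounded sequences of nonnegative Radon measures; the paper itself does not give a proof but cites \cite[Proposition 1.202]{FL}, which proceeds along exactly these lines (Riesz representation, separability of $C_0(\o)$, sequential Banach--Alaoglu, and testing against nonnegative functions to conclude $\mu \ge 0$). No gaps.
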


The following lemma is a key ingredient in the proof of the liminf inequality (see \Cref{prop:liminf}). For a proof see \cite[Theorem 1.203(iii)]{FL}.

\begin{lemma}\label{lem:convmeas}
Let $\{\mu_n\}_{n \in \N} \subset \mathcal{M}^+(\o)$ be a sequence of finite nonnegative Radon measures such that $\mu_n\stackrel{*}{\rightharpoonup}\mu$.
Then
\[
\lim_{n \to \infty} \mu_n(A) = \mu(A)
\]
for every Borel set $A \subset \o$ with $\mu(\partial A) = 0$.
\end{lemma}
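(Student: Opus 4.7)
The plan is to derive the statement from two classical Portmanteau-type estimates for weak-$*$ convergence of finite nonnegative Radon measures, namely
(i) $\mu(U) \le \liminf_{n\to\infty} \mu_n(U)$ for every open set $U \subset \o$, and
(ii) $\limsup_{n\to\infty} \mu_n(K) \le \mu(K)$ for every compact set $K \subset \o$.
Both estimates are obtained by sandwiching the indicator of the set between two continuous cut-offs that belong to $C_0(\o)$, so that the weak-$*$ convergence hypothesis can be applied to them.

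To prove (i), I would approximate $U$ from inside by compact sets via inner regularity of $\mu$: given $\varepsilon > 0$, choose a compact $K \subset U$ with $\mu(K) > \mu(U) - \varepsilon$, and use Urysohn's lemma to produce $\varphi \in C_c(\o)$ with $\supp \varphi \subset U$, $0 \le \varphi \le 1$, and $\varphi \equiv 1$ on $K$. Then
\[
\liminf_{n\to\infty} \mu_n(U) \ge \liminf_{n\to\infty}\int_\o \varphi \,d\mu_n = \int_\o \varphi \,d\mu \ge \mu(K) > \mu(U) - \varepsilon,
\]
and letting $\varepsilon \to 0$ gives (i). Estimate (ii) is proven symmetrically: by outer regularity of the finite measure $\mu$, trap $K$ inside an open set $V$ with $\overline{V}$ compact in $\o$ and $\mu(V) < \mu(K) + \varepsilon$, then take an Urysohn cut-off $\varphi \in C_c(V)$ equal to $1$ on $K$ and run the same argument in the reverse direction.

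Once (i) and (ii) are in hand, the assumption $\mu(\partial A) = 0$ forces
\[
\mu(A^\circ) = \mu(A) = \mu(\overline{A}),
\]
so sandwiching $\mu_n(A^\circ) \le \mu_n(A) \le \mu_n(\overline{A})$ and applying (i) to the open set $A^\circ$ and (ii) to the compact set $\overline{A}$ yields
\[
\mu(A) = \mu(A^\circ) \le \liminf_{n\to\infty} \mu_n(A) \le \limsup_{n\to\infty} \mu_n(A) \le \mu(\overline{A}) = \mu(A),
\]
which collapses to the claimed limit.

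The only delicate point in the argument is guaranteeing that the Urysohn cut-offs lie in $C_0(\o)$, so that the defining property of weak-$*$ convergence applies; this is handled in (ii) by selecting the enclosing open set $V$ to be relatively compact in $\o$, which is possible thanks to the outer regularity of the finite Radon measure $\mu$. In the applications appearing later in the paper the Borel set $A$ will be relatively compact in $\o$ (typically a cube or a ball contained in $\o$), so $\overline{A}$ is automatically a compact subset of $\o$ and the argument goes through without additional modification.
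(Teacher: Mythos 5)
Your proof via the two Portmanteau estimates---lower semicontinuity of $\mu_n(\cdot)$ on open subsets of $\o$, upper semicontinuity on compact subsets---is the standard textbook argument and is correct as written. The paper itself does not prove the lemma but cites \cite[Theorem 1.203(iii)]{FL}, so there is no competing internal argument to compare against.

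You are right to be careful about the compactness of $\overline{A}$: under weak-$*$ convergence tested against $C_0(\o)$, the upper bound $\limsup_n \mu_n(F) \le \mu(F)$ holds for \emph{compact} $F \subset \o$ but can fail for sets that are merely relatively closed in $\o$, because mass can escape to $\partial\o$. In fact, as literally stated the lemma is false for a general Borel set $A \subset \o$: take $\o = (0,1)$, $\mu_n = \delta_{1/n}$, so $\mu_n \stackrel{*}{\rightharpoonup} 0$ in $\mathcal{M}^+(\o)$, and let $A = (0,1/2)$. Then the relative boundary of $A$ in $\o$ is $\{1/2\}$ and $\mu(\{1/2\}) = 0$, yet $\mu_n(A) = 1$ for $n \ge 3$ while $\mu(A) = 0$. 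The correct hypothesis is that $\overline{A}$ be a compact subset of $\o$ (equivalently, that $A$ be relatively compact in $\o$), which is exactly what your final paragraph identifies and what the paper tacitly assumes whenever it invokes the lemma (see \Cref{mubdry=0}, where the sets $E_\rho$ satisfy $\overline{E_\rho} \subset \o$). So your proof is complete for the statement as it is actually used, and your remark pinpoints precisely where the stated hypotheses need a minor strengthening.
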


\begin{remark}
\label{mubdry=0}
For our purposes, the condition $\mu(\partial A) = 0$ in \Cref{lem:convmeas} is not very restrictive. Indeed, fix $x \in \o$ and let $E$ be an open convex set that contains $x$. Consider the family $\{E_\rho\}_{\rho > 0}$ of rescaled copies of $E$, \emph{i.e.\@}, let $E^\rho \coloneqq x + \rho (E - x)$.
Since by assumption $\mu$ is a \emph{finite} Radon measure, it is immediate to verify that the set
\[
\{ \rho > 0 : \mu(\partial E_\rho) > 0 \}
\]
is at most countable.
Indeed, take $\overline{\rho} > 0$ such that $E_{\overline{\rho}} \subset \o$.
Then, for each $m \in \N$, consider the set
\[
A_m \coloneqq \left\{ \rho\in(0,\overline{\rho}) : \frac{1}{m+1} < \mu(\partial E_\rho) \leq \frac{1}{m} \right\}.
\]
Then
\[
\sum_{m = 1}^{\infty} \frac{1}{m+1}\mathcal{L}^0(A_m) \leq \mu(E_{\overline{\rho}}) < \infty,
\]
yielding that each $A_m$ is at most finite (and not all of them can be non-empty). In particular, if $\mu_n\stackrel{*}{\rightharpoonup}\mu$, the argument above shows that
\[
\lim_{n \to \infty}\mu_n(E_{\rho}) = \mu(E_{\rho})
\]
for all but at most countably many values of $\rho < \overline{\rho}$.
\end{remark}

In an approximation result needed for the limsup inequality (see \Cref{prop:limsup}) we will also make use of another notion of convergence for measures.

\begin{definition}\label{def:Cb}
Let $\{\mu_n\}_{n \in \N} \subset \mathcal{M}^+(\o)$ be a sequence of finite nonnegative Radon measures. We say that $\mu_n$ converges in $(C_b(\o))'$ to $\mu \in \mathcal{M}^+(\o)$ if
\[
\lim_{n\to\infty}\int_\o \varphi \,d\mu_n \to \int_\o \varphi \,d\mu
\]
for all $\varphi\in C_b(\o)$. 
Here $C_b(\o)$ denotes the space of continuous bounded functions on $\o$.
\end{definition}

Since $C_0(\o)\subset C_b(\o)$, if $\mu_n$ converges in $(C_b(\o))'$ to $\mu$, then $\mu_n\stackrel{*}{\rightharpoonup}\mu$.
The opposite is true if, in addition, we know that the limit measure does not charge the boundary of $\o$, as shown in the next result (for a proof see \cite[Proposition 1.206]{FL}).

\begin{lemma}
Let $\{\mu_n\}_{n \in \N} \subset \mathcal{M}^+(\o)$ be a sequence of finite nonnegative Radon measures such that $\mu_n\stackrel{*}{\rightharpoonup}\mu$ for some $\mu \in \mathcal{M}^+(\o)$, and assume that
\[
\lim_{n\to\infty}\mu_n(\o)=\mu(\o).
\]
Then $\mu_n$ converges in $(C_b(\o))'$ to $\mu$.
\end{lemma}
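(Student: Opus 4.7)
The plan is to leverage the hypothesis $\mu_n(\o) \to \mu(\o)$ to upgrade the weak-$*$ convergence on $C_0(\o)$ to convergence on the larger class $C_b(\o)$ via a standard tightness/cutoff argument. First I would fix $\varphi \in C_b(\o)$ and $\varepsilon > 0$, and exploit the fact that $\mu$ is a finite Radon measure on $\o$ (hence inner regular) to select a compact set $K \subset \o$ such that $\mu(\o \setminus K) < \varepsilon$. By Urysohn's lemma I can then pick $\psi \in C_c(\o)$ with $0 \le \psi \le 1$ and $\psi \equiv 1$ on $K$.

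Next I would decompose
\[
\int_\o \varphi \,d\mu_n - \int_\o \varphi \,d\mu = \int_\o \varphi \psi \,d(\mu_n - \mu) + \int_\o \varphi(1-\psi)\,d\mu_n - \int_\o \varphi(1-\psi)\,d\mu.
\]
Since $\psi$ has compact support in $\o$ and $\varphi$ is bounded, the product $\varphi\psi$ belongs to $C_c(\o) \subset C_0(\o)$, so the weak-$*$ convergence $\mu_n \stackrel{*}{\rightharpoonup} \mu$ forces the first term to vanish as $n \to \infty$. For the remaining two terms I would estimate them by $\|\varphi\|_\infty \int_\o (1-\psi)\,d\mu_n$ and $\|\varphi\|_\infty \int_\o (1-\psi)\,d\mu$, respectively; the latter is bounded by $\|\varphi\|_\infty\, \mu(\o\setminus K) < \|\varphi\|_\infty \varepsilon$ by the choice of $K$.

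The crux is handling $\int_\o (1-\psi)\,d\mu_n$, and this is precisely where the mass assumption enters. Writing
\[
\int_\o (1-\psi)\,d\mu_n = \mu_n(\o) - \int_\o \psi \,d\mu_n,
\]
the hypothesis $\mu_n(\o) \to \mu(\o)$ together with $\psi \in C_c(\o)$ and the weak-$*$ convergence yields
\[
\lim_{n\to\infty}\int_\o (1-\psi)\,d\mu_n = \mu(\o) - \int_\o \psi \,d\mu = \int_\o (1-\psi)\,d\mu \le \mu(\o\setminus K) < \varepsilon.
\]
Combining these estimates, for all sufficiently large $n$ we obtain $\left|\int_\o \varphi\,d\mu_n - \int_\o \varphi\,d\mu\right| \le 3\|\varphi\|_\infty \varepsilon$, and sending $\varepsilon \to 0$ concludes the argument. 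I do not anticipate any serious obstacle here: the main (minor) point to be careful about is that $C_c(\o)$ sits inside $C_0(\o)$ as defined in the paper, so one can legitimately test the weak-$*$ convergence against the cutoff $\psi$ and the product $\varphi\psi$.
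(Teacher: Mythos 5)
Your argument is correct. The paper does not give its own proof of this lemma---it cites \cite[Proposition 1.206]{FL}---so there is nothing internal to compare against, but your cutoff/tightness argument is exactly the standard proof of this fact: inner regularity supplies the compact set $K$ with $\mu(\o\setminus K)<\varepsilon$, Urysohn produces the cutoff $\psi\in C_c(\o)$, the term $\int_\o\varphi\psi\,d(\mu_n-\mu)$ is handled by weak-$*$ convergence since $\varphi\psi\in C_c(\o)\subset C_0(\o)$, and the mass hypothesis $\mu_n(\o)\to\mu(\o)$ is used precisely (and only) to control $\int_\o(1-\psi)\,d\mu_n$, which would otherwise be uncontrolled because $1-\psi\notin C_0(\o)$. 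The bookkeeping at the end (the $3\|\varphi\|_\infty\varepsilon$ bound for large $n$) is fine once one allows $\int_\o(1-\psi)\,d\mu_n<2\varepsilon$ eventually. No gaps.
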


We conclude this list of results on Radon measures with a well-known theorem from measure theory. The result presented below allows to recover the absolutely continuous part of a measure with respect to another via a differentiation process. For a proof we refer the reader to \cite[Theorem 1.153 and Remark 1.154]{FL}.

\begin{theorem}[Besicovitch derivation theorem]\label{thm:Bes}
Let $\mu,\nu \in \mathcal{M}^+(\o)$, and write $\nu=\nu^{ac}+\nu^s$, where $\nu^{ac}\ll\mu$, and $\nu^s\perp\mu$.
Let $C\subset\R^N$ be an open convex set that contains the origin.
Then there exists a Borel set $S\subset\o$ with $\mu(S)=0$ such that
\[
\frac{d\nu^{ac}}{d\mu}(x) = \lim_{\rho\to0} \frac{\nu(x+\rho C)}{\mu(x+\rho C)} \in[0,\infty),
\]
and 
\[
\lim_{\rho\to0} \frac{\nu^s(x+\rho C)}{\mu(x +\rho C)}=0,
\]
for all $x\in\o\setminus S$.
\end{theorem}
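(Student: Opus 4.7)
The plan is to prove this classical differentiation theorem by the standard Besicovitch strategy. As a preliminary reduction I would replace $C$ by $C \cap B(0,R)$ for some large $R$; this does not affect any of the limits at fixed $x$ as $\rho \to 0^+$ and ensures $C$ is bounded. By the Lebesgue--Radon--Nikodym decomposition, write $\nu^{ac} = f\mu$ for a nonnegative Borel measurable $f \in L^1_{\mathrm{loc}}(\mu)$, and fix a Borel set $N \subset \o$ with $\mu(N)=0$ on which $\nu^s$ concentrates.

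The technical core consists of two weak-type maximal estimates: for every nonnegative finite Borel measure $\lambda$ on $\o$, every Borel set $E \subset \o$, and every $t>0$,
\[
\mu\bigl(\{x\in E : \overline{D}_\lambda(x) > t\}\bigr) \leq \frac{C_N}{t}\,\lambda(E),\qquad
\lambda\bigl(\{x\in E : \underline{D}_\lambda(x) < t\}\bigr) \leq C_N\, t\, \mu(E),
\]
where
\[
\overline{D}_\lambda(x) \coloneqq \limsup_{\rho\to 0^+}\frac{\lambda(x+\rho C)}{\mu(x+\rho C)},\qquad \underline{D}_\lambda(x) \coloneqq \liminf_{\rho\to 0^+}\frac{\lambda(x+\rho C)}{\mu(x+\rho C)}
\]
(with ratios having vanishing denominator set to $+\infty$). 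These are obtained by applying a Besicovitch--Morse type covering theorem to fine covers of the relevant superlevel sets by translates $x+\rho C$, extracting a pairwise disjoint (or at most boundedly overlapping) subfamily on which the defining inequality holds. Applied to $\lambda = \nu$ and to the sets $E_{\alpha,\beta} \coloneqq \{\underline{D}_\nu < \alpha < \beta < \overline{D}_\nu\}$ for all rational $0 \leq \alpha < \beta$, the estimates give $\beta\mu(E_{\alpha,\beta}) \leq \alpha\mu(E_{\alpha,\beta})$, hence $\mu(E_{\alpha,\beta}) = 0$; taking a countable union shows $\lim_{\rho\to 0^+}\nu(x+\rho C)/\mu(x+\rho C)$ exists in $[0,+\infty]$ for $\mu$-a.e.\@ $x \in \o$.

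To identify the limit with $f(x)$ and show that the singular part vanishes, I would first apply the first maximal estimate to $\lambda = \nu^s$ on $E = \o\setminus N$: for every $t>0$ the set $\{\overline{D}_{\nu^s} > t\}\cap(\o\setminus N)$ is $\mu$-null, so $\overline{D}_{\nu^s} = 0$ at $\mu$-a.e.\@ $x\in\o\setminus N$, and since $\mu(N)=0$ this establishes the second assertion. For the first, a standard approximation argument approximates $f$ in $L^1(\mu)$ by step functions $g_n$; the two maximal estimates applied to the positive and negative parts of $\nu^{ac} - g_n\mu$, combined with the trivial identity $\lim_{\rho\to 0^+}g_n(x+\rho C)\mu / \mu(x+\rho C) = g_n(x)$ valid $\mu$-a.e.\@ when $g_n$ is simple, yield $\overline{D}_{\nu^{ac}} = \underline{D}_{\nu^{ac}} = f$ $\mu$-a.e.\@, and hence $\lim \nu(x+\rho C)/\mu(x+\rho C) = f(x) = d\nu^{ac}/d\mu(x)$ for $\mu$-a.e.\@ $x$. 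The set $S$ of the statement is the countable union of the $\mu$-null exceptional sets arising along the way. The main obstacle is the covering lemma itself: for a general bounded open convex set $C$ (neither a Euclidean ball nor centrally symmetric in general), one cannot directly invoke the Vitali covering theorem and must instead rely on a Besicovitch--Morse type result, whose proof adapts the ball case by exploiting the bounded eccentricity of $C$ relative to inscribed and circumscribed balls about the origin. Once that covering result is available, the remaining assembly is routine.
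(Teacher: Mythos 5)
The paper does not actually prove this statement: it is quoted as classical background with a citation to \cite[Theorem 1.153 and Remark 1.154]{FL}, and your outline is the standard Besicovitch argument from that literature (covering theorem for translates of a convex body, weak-type comparison estimates, slicing by $E_{\alpha,\beta}$, and separate treatment of the singular and absolutely continuous parts). The overall architecture is right, but as written the argument does not close, for three concrete reasons.

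First, the opening reduction is false: if $C$ is unbounded, then $x+\rho(C\cap B(0,R))$ differs from $x+\rho C$ by a set that does not shrink as $\rho\to 0^+$, and the theorem itself fails for unbounded $C$ (take $C$ an open half-space containing the origin: $x+\rho C$ decreases to a closed half-space, and the ratio converges to a quotient of half-space measures, not to $f(x)$). Boundedness of $C$ is a genuine hypothesis --- implicit in the paper, where $C$ is always a cube or a rectangle --- and must be assumed, not "reduced to". Second, the constants $C_N$ in your two maximal inequalities are incompatible with the conclusion $\beta\,\mu(E_{\alpha,\beta})\le\alpha\,\mu(E_{\alpha,\beta})$ you draw from them: with the constants present you only get $\mu(E_{\alpha,\beta})\le C_N^2(\alpha/\beta)\,\mu(E_{\alpha,\beta})$, which is vacuous unless $\beta>C_N^2\alpha$. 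What the argument needs are the constant-free comparison lemmas ($A\subset\{\overline{D}_\lambda\ge t\}$ implies $\lambda(A)\ge t\,\mu(A)$, and its dual), and these come from the Vitali-type corollary of the Besicovitch--Morse covering theorem --- extraction of a countable \emph{disjoint} subfamily of a fine cover that covers $\mu$-almost all of the set --- not from a single boundedly overlapping extraction. Third, the "trivial identity" $\lim_{\rho\to0^+}\int_{x+\rho C}g_n\,d\mu/\mu(x+\rho C)=g_n(x)$ for simple $g_n$ is not trivial: it is the density theorem $\mu(A\cap(x+\rho C))/\mu(x+\rho C)\to\ca_A(x)$ for $\mu$-a.e.\@ $x$, which must itself be deduced from the same comparison lemmas (alternatively, one can skip the simple-function approximation altogether by showing $\{f<s\}\cap\{\overline{D}\nu^{ac}>t\}$ is $\mu$-null for all rationals $s<t$). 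Each of these is repairable along standard lines, but they are exactly the points where the sketch, taken literally, breaks.
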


\subsection{Functions of bounded variation} 
We start by recalling basic definitions and well known properties of functions of bounded variation and sets of finite perimeter. We refer the reader to \cite{AFP} for more details.

\begin{definition}
Let $u \in L^1(\o;\R^M)$.
We say that $u$ is a function \emph{of bounded variation} if its distributional derivative $Du$ is a finite matrix-valued Radon measure on $\o$.
In particular,
\[
|Du|(\o) = \sup\left\{ \sum_{i = 1}^M \int_\o u^i(x) \mathrm{div}\varphi^i(x)\, dx :
	\varphi \in C^\infty_c(\o;\R^{M\times N}), \|\varphi\|_{L^\infty}\leq 1 \right\}.
\]
In this case we write $u \in BV(\o; \R^M)$.
\end{definition}

\begin{definition}\label{def:jump}
Let $u \in L^1(\o;\R^M)$. We define $J_u \subset \o$, the \emph{jump set} of $u$, as the set of points $x \in \o$ such that there exist distinct vectors $a, b \in \R^M$ and a direction $\nu \in \S^{N - 1}$ for which
\[
\lim_{\rho \to 0^+}\frac{1}{\rho^N}\int_{B^+(x,\nu, \rho)} |u(y) - a|\,dy
= \lim_{\rho \to 0^+}\frac{1}{\rho^N}\int_{B^-(x,\nu, \rho)} |u(y) - b|\,dy = 0,
\]
where
\[
B^+(x,\nu, \rho) \coloneqq \nu_+(x)\cap B(x,\rho),
\quad\quad\quad
B^-(x,\nu, \rho)\coloneqq\nu_-(x)\cap B(x,\rho),
\]
and
\[
\nu_+(x) \coloneqq \{y \in \R^N: (y-x) \cdot \nu \ge 0 \}, \quad\quad\quad
\nu_-(x) \coloneqq \{y \in \R^N : (y-x) \cdot \nu \le 0 \}.
\]
If $x\in J_u$, we denote the triple $(a, b,\nu)$ as $(u^+(x), u^-(x), \nu_u(x))$. Notice that $(u^+(x), u^-(x),\nu_u(x))$ is unique up to replacing $\nu_u(x)$ with $-\nu_u(x)$ and interchanging $u^+(x)$ and $u^-(x)$.
\end{definition}

\begin{remark}
Notice that using cubes instead of balls in \Cref{def:jump} yields an analogous characterization of the jump set. In order to keep the notation as simple as possible, in the proof of the liminf inequality (see \Cref{prop:liminf}) it will be convenient to consider cubes with two faces orthogonal to the vector $\nu_u$.
\end{remark}

The next result concerns the structure of the jump set and the decomposition of the distributional derivative of a function of bounded variation. For a proof see \cite[Theorem 3.78]{AFP}.

\begin{theorem}[Federer--Vol'pert]
The jump set $J_u$ of a function $u\in BV(\o;\R^M)$ is countably $\hno$-rectifiable, \emph{i.e.\@}, there exist Lipschitz continuous functions $f_i \colon \R^{N-1} \to \R^N$ such that
\[
\hno \left(J_u \setminus \bigcup_{i=1}^{\infty} f_i(K_i)\right) = 0,
\]
where each $K_i$ is a compact subset of $\R^{N-1}$. Moreover, 
\[
Du = \nabla u \mathcal{L}^N + (u^+ - u^-) \otimes \nu_u \hno\restr J_u + D^c u,
\]
where $D^c u$ denotes the so called \emph{Cantor part} of the distributional derivative.
\end{theorem}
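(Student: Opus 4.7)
The statement is a classical structure theorem in geometric measure theory, so my plan follows the approach of Ambrosio--Fusco--Pallara \cite{AFP}, splitting the argument into countable $\hno$-rectifiability of $J_u$ and the decomposition of $Du$.

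For the rectifiability, I would first reduce to the scalar case $M=1$. In that setting the coarea formula for $BV$ functions yields $Du=\int_\R D\ca_{\{u>t\}}\,dt$, so that for $\mathcal{L}^1$-almost every $t$ the superlevel set $E_t \coloneqq \{u>t\}$ has finite perimeter in $\o$. De Giorgi's structure theorem applied to each such $E_t$ guarantees that the reduced boundary $\partial^* E_t$ is countably $\hno$-rectifiable, with a measure theoretic outer normal $\nu_{E_t}$ defined $\hno$-a.e. on $\partial^* E_t$. A standard argument based on the characterization of approximate continuity then shows that, up to an $\hno$-negligible set, $J_u$ is contained in the union over $t$ in a countable dense set (for example, rationals between $u^-(x)$ and $u^+(x)$) of the reduced boundaries $\partial^* E_t$, from which countable $\hno$-rectifiability follows. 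For $M\geq 2$ I would apply this componentwise and use the fact that a countable union of countably $\hno$-rectifiable sets is still countably $\hno$-rectifiable.

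For the decomposition, start with the Lebesgue--Radon--Nikodym splitting $Du = Du^{ac} + Du^s$ with respect to $\mathcal{L}^N$. An approximate-differentiability argument identifies the density of $Du^{ac}$ with the approximate gradient $\nabla u$, yielding the first summand. The singular part $Du^s$ is concentrated on the Borel set of points where $u$ fails to be approximately continuous, which contains $J_u$. Having established that $J_u$ is countably $\hno$-rectifiable, I would apply the Besicovitch derivation theorem (\Cref{thm:Bes}) with reference measure $\hno\restr J_u$ and a suitable convex window: at $\hno$-a.e.\ $x\in J_u$, the half-ball definition of $J_u$ forces the blow-up
\[
\lim_{\rho\to 0}\frac{Du(B(x,\rho))}{\hno(J_u\cap B(x,\rho))} = (u^+(x)-u^-(x))\otimes\nu_u(x),
\]
so that $Du^s\restr J_u = (u^+-u^-)\otimes\nu_u\,\hno\restr J_u$. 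Setting $D^c u \coloneqq Du^s\restr(\o\setminus J_u)$ and collecting the three pieces gives the stated decomposition.

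The main obstacle I anticipate is the identification (up to $\hno$-negligible sets) of $J_u$ with the union of reduced boundaries of superlevel sets, since the half-ball blow-up behavior of $u$ at a jump point must be matched carefully to the blow-up of the characteristic functions $\ca_{E_t}$ for an appropriate range of parameters $t$. Passing from componentwise slicing to a unified rectifiable representation in the vectorial case also requires some bookkeeping, but ultimately rests on standard stability properties of rectifiable sets. In the author's treatment this theorem is simply quoted from \cite{AFP}.
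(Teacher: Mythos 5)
The paper does not prove this result; it is quoted directly from \cite[Theorem 3.78]{AFP}, as you correctly note, so there is no ``paper's proof'' to compare against. Your sketch is a reasonable outline of the standard Ambrosio--Fusco--Pallara development: coarea plus De Giorgi's structure theorem for the rectifiability, Calder\'on--Zygmund approximate differentiability for the absolutely continuous density, and a blow-up/derivation argument at jump points for the jump part. Two remarks, one of which is a genuine error.

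First, the assertion that ``the singular part $Du^s$ is concentrated on the Borel set of points where $u$ fails to be approximately continuous'' is false. The Cantor--Vitali function $f$ on $(0,1)$ is continuous, so its approximate discontinuity set $S_f$ is empty, yet $Df = D^c f = Df^s$ is a nonzero measure concentrated on the Cantor set; the singular part need not see $S_u$ at all. In your argument this claim is purely decorative (the next step derives $Du^s\restr J_u$ directly from the Besicovitch blow-up, without appealing to concentration), so the overall plan survives, but the sentence should be removed or replaced with the correct statement, which is the reverse inclusion: $J_u\subset S_u$ and $|Du\restr J_u|\ll\hno\restr J_u$.

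Second, defining $D^c u\coloneqq Du^s\restr(\o\setminus J_u)$ does produce \emph{a} three-term decomposition, but it does not by itself identify this leftover term with ``the Cantor part'' in the accepted sense, namely $D^c u = Du^s\restr(\o\setminus S_u)$, which is singular with respect to both $\mathcal{L}^N$ and $\hno\restr J_u$ and vanishes on every Borel set that is $\sigma$-finite with respect to $\hno$. These two definitions agree only because of the nontrivial lemma $|Du|(S_u\setminus J_u)=0$ (\cite[Lemma 3.76]{AFP}), which your plan never invokes. Finally, as a technical point, $\hno\restr J_u$ need not be a finite Radon measure, so before applying the Besicovitch derivation theorem in the form quoted in the paper (\Cref{thm:Bes}) you should exhaust $J_u$ by Borel pieces of finite $\hno$-measure, which is possible precisely because $J_u$ is countably $\hno$-rectifiable.
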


%%%%%%%%%%%%%%%%%%%%%%%%%%%%%%%%%%%%%%%%%%%%%%%%
%%%%%%%%%%%%%%%%%%%%%%%%%%%%%%%%%%%%%%%%%%%%%%%%

We now focus on the special class of functions of bounded variations which consists of characteristic functions of sets.

\begin{definition}
Let $E\subset\o$.
We say that $E$ has \emph{finite perimeter} in $\o$ if its characteristic function $\ca_E \colon \o \to \{0,1\}$, defined as
\[
\ca_E(x)\coloneqq
\left\{
\begin{array}{ll}
1 & \text{ if } x\in E,\\
0 & \text{ otherwise},
\end{array}
\right.
\]
 is of bounded variation in $\o$.
\end{definition}

For sets of finite perimeter, we have two notions of boundary coming from measure theory.

\begin{definition}
Let $E\subset\R^N$ be a set of finite perimeter in $\o$.
We call \emph{reduced boundary} of $E$, denoted with $\mathcal{F} E$, the set of points 
$x \in \mathrm{supp}|D \ca_E|\cap\o$ for which the limit
\[
\nu_E(x) \coloneqq \lim_{\rho\to0}\frac{D\ca_E(B(x,\rho))}{|D\ca_E|(B(x,\rho))}
\]
exists in $\R^N$ and is such that $|\nu_E(x)|=1$.
\end{definition}

\begin{definition}\label{def:partialstar}
Let $E\subset \R^N$ be an $\mathcal{L}^N$-measurable set.
For $t\in[0,1]$ we define
\[
E^t \coloneqq \left\{   x\in \R^N : \lim_{\rho\to0}\frac{\mathcal{L}^N(E\cap B(x,\rho))}{\mathcal{L}^N(B(x,\rho))} = t \right\},
\]
the set of points of density $t$ for $E$.
The set $\partial^* E\coloneqq\R^N\setminus (E^1\cup E^0)$ is called the \emph{essential boundary} of $E$.
\end{definition}

The relation between these two notions of measure theoretic boundary is specified in the following theorem (see \cite[Theorem 3.61]{AFP}).

\begin{theorem}[Federer]\label{thm:Fed}
Let $E\subset\R^N$ be a set of finite perimeter in $\o$.
Then
\[
\mathcal{F} E \subset \partial^{1/2} E \subset \partial^* E,
\]
and
\[
\hno\left(\o\setminus (E^0\cup\mathcal{F}E\cup E^1) \right)=0.
\]
\end{theorem}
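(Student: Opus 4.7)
The plan is to split the proof into the chain of inclusions $\mathcal{F}E \subset \partial^{1/2}E \subset \partial^*E$ and the measure-theoretic equality $\hno(\o \setminus (E^0 \cup \mathcal{F}E \cup E^1)) = 0$. The inclusion $\partial^{1/2}E \subset \partial^*E$ is immediate from \Cref{def:partialstar}, since a point of density exactly $1/2$ is neither of density $0$ nor of density $1$. The remaining inclusion $\mathcal{F}E \subset \partial^{1/2}E$ relies on De Giorgi's blow-up analysis at reduced boundary points, while the measure-theoretic equality requires a relative isoperimetric density estimate combined with a Radon measure comparison argument.

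For $\mathcal{F}E \subset \partial^{1/2}E$, I would fix $x \in \mathcal{F}E$ and consider the rescaled sets $E_\rho \coloneqq (E-x)/\rho$. Using the definition of the reduced boundary together with the scaling of the perimeter measure, one first shows that the measures $|D\ca_{E_\rho}|$ are locally equibounded in $\R^N$. By $BV$-compactness and a careful analysis of the rescaled approximate normals $\nu_{E_\rho}$, every $L^1_{\mathrm{loc}}$-subsequential limit $F$ of $\{E_\rho\}$ has constant inner normal equal to $\nu_E(x)$. The classical rigidity result for sets of finite perimeter with constant normal then forces $F$ to coincide (up to Lebesgue-negligible sets) with the half-space $H^- \coloneqq \{y \in \R^N : y \cdot \nu_E(x) \le 0\}$. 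Since this limit is independent of the subsequence, one obtains
\[
\lim_{\rho \to 0^+} \frac{\mathcal{L}^N(E \cap B(x,\rho))}{\mathcal{L}^N(B(x,\rho))} = \frac{1}{2},
\]
which is precisely $x \in \partial^{1/2}E$.

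For the measure-theoretic statement it suffices to show $\hno(\partial^*E \setminus \mathcal{F}E) = 0$. The crucial step is to establish, at every $x \in \partial^*E$, a lower density bound of the form
\[
\liminf_{\rho \to 0^+} \frac{|D\ca_E|(B(x,\rho))}{\rho^{N-1}} \ge c,
\]
with $c>0$ a dimensional constant. This follows from the relative isoperimetric inequality applied on balls $B(x,\rho)$, together with the fact that for $x \in \partial^*E$ neither $\mathcal{L}^N(E\cap B(x,\rho))/\mathcal{L}^N(B(x,\rho))$ nor $\mathcal{L}^N(B(x,\rho)\setminus E)/\mathcal{L}^N(B(x,\rho))$ can tend to $0$ along every sequence of radii. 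Once the lower bound is available, a standard comparison between Hausdorff measures and Radon measures with positive lower density (see e.g.\@ \cite[Theorem 2.56]{AFP}) yields $\hno \restr \partial^*E \le C\,|D\ca_E|$ on Borel sets. Since by De Giorgi's structure theorem $|D\ca_E|$ is concentrated on $\mathcal{F}E$, one concludes $\hno(\partial^*E \setminus \mathcal{F}E) = 0$, and combining this with the already proved inclusion $\mathcal{F}E \subset \partial^{1/2}E \subset \partial^*E$ gives the second claim.

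The main obstacle is the blow-up analysis at reduced boundary points: one must upgrade the rather weak defining property of $\mathcal{F}E$, which is only a convergence in average of $D\ca_E / |D\ca_E|$ to a unit vector, into a genuine pointwise $L^1_{\mathrm{loc}}$-convergence of the rescaled sets to a half-space, and then invoke the non-trivial rigidity statement that the only sets of finite perimeter whose measure-theoretic normal is constant are translates of half-spaces. Once this geometric information is in place, the remaining arguments are routine applications of the isoperimetric inequality and classical measure-comparison tools.
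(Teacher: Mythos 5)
The paper does not prove this result; it simply cites it as \cite[Theorem 3.61]{AFP}, so there is no in-paper argument to compare against. Your sketch is essentially the standard proof of Federer's theorem as given in \cite{AFP} (or in Evans--Gariepy or Maggi): the triviality of $\partial^{1/2}E\subset\partial^*E$, De Giorgi's blow-up theorem for $\mathcal{F}E\subset\partial^{1/2}E$, and the relative isoperimetric inequality combined with a density-comparison theorem to deduce $\hno(\partial^*E\setminus\mathcal{F}E)=0$. The decomposition of the claim and the key lemmas you invoke are the right ones.

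One technical slip worth flagging: you state the density estimate at $x\in\partial^*E$ as a $\liminf$ bound,
\[
\liminf_{\rho\to 0^+}\frac{|D\ca_E|(B(x,\rho))}{\rho^{N-1}}\ge c,
\]
but the argument you then describe --- that the density ratio cannot tend to $0$ and cannot tend to $1$, so by continuity in $\rho$ there is a sequence $\rho_m\to 0$ along which $\mathcal{L}^N(E\cap B_{\rho_m})/\mathcal{L}^N(B_{\rho_m})$ stays in a fixed interval $[\delta,1-\delta]$, and hence the isoperimetric inequality forces $|D\ca_E|(B_{\rho_m})\ge c\,\rho_m^{N-1}$ --- only yields a $\limsup$ lower bound. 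This is, however, exactly what is needed: the comparison theorem \cite[Theorem 2.56]{AFP} requires $\Theta^*_{N-1}(|D\ca_E|,x)\ge c$ (an upper-density bound), not a lower-density one, and the stronger $\liminf$ estimate does not follow from this argument for points of $\partial^*E\setminus\mathcal{F}E$. Replacing ``$\liminf$'' with ``$\limsup$'' makes the sketch correct.
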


In a similar fashion as the Federer-Vol'pert theorem, the reduced boundary enjoys some structure properties (see \cite[Theorem 3.59]{AFP}).

\begin{theorem}[De Giorgi]\label{thm:DG}
Let $E \subset \R^N$ be a set of finite perimeter in $\o$ and for all $x_0 \in \mathcal{F}E$ and $\rho > 0$ let $E_\rho \coloneqq \frac{E-x_0}{\rho}$. Then $\mathcal{F}E$ is countably $\hno$-rectifiable and 
\[
\ca_{E_\rho} \to \ca_{H}
\]
locally in $L^1(\o)$ as $\rho\to0$, where $H \coloneqq \{ x\in\R^N : x\cdot \nu_E(x_0)\geq 0 \}$. Moreover, 
\[
\lim_{\rho\to0}\frac{\hno(\mathcal{F}E\cap Q(x_0,\rho))}{\rho^{N-1}} = 1.
\]
\end{theorem}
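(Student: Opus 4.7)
The plan is to reduce everything to a blow-up analysis centered at a fixed point $x_0 \in \mathcal{F}E$. First I would consider the rescaled sets $E_\rho \coloneqq (E - x_0)/\rho$, and note that a change of variables gives the scaling identity $|D\ca_{E_\rho}|(B_R) = \rho^{-(N-1)}|D\ca_E|(B(x_0, R\rho))$ for any $R$ with $B(x_0, R\rho) \subset \o$. The very fact that $x_0 \in \mathrm{supp}|D\ca_E|$ with a well-defined direction $\nu_E(x_0)$ of unit length implies a lower and upper density bound of the form $C_1 R^{N-1} \le |D\ca_E|(B(x_0, R\rho))/\rho^{N-1} \le C_2 R^{N-1}$ on appropriate scales, via the relative isoperimetric inequality. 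Together with the uniform $L^\infty$ bound on the $\ca_{E_\rho}$, the standard $BV$ compactness theorem yields a subsequence $\rho_k \to 0$ along which $\ca_{E_{\rho_k}} \to \ca_{E_\infty}$ in $L^1_{\mathrm{loc}}(\R^N)$ for some set $E_\infty$ of locally finite perimeter.

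The key step is identifying $E_\infty$ as the half-space $H$. By the definition of the reduced boundary, the vector-valued measures $D\ca_{E_\rho}/|D\ca_{E_\rho}|$ concentrate their mass in the direction $\nu_E(x_0)$ as $\rho \to 0$; passing to the weak-$*$ limit and using lower semicontinuity of the total variation, one obtains $D\ca_{E_\infty} = \nu_E(x_0)\,|D\ca_{E_\infty}|$, so the distributional derivative of $\ca_{E_\infty}$ is constantly directed. A slicing argument along $\nu_E(x_0)^\perp$ then shows that the one-dimensional sections of $E_\infty$ are (up to null sets) half-lines pointing in the direction $-\nu_E(x_0)$, forcing $E_\infty$ to be a half-space with inner normal $-\nu_E(x_0)$, translated by some $a \in \R$. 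Finally, the density condition at $x_0$ determined by $\nu_E(x_0)$ (which yields $\mathcal{L}^N(E \cap B(x_0,\rho))/\mathcal{L}^N(B(x_0,\rho)) \to 1/2$) forces $a = 0$, so $E_\infty = H$. Since the limit does not depend on the extracted subsequence, the full convergence $\ca_{E_\rho} \to \ca_H$ holds.

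Once the blow-up is identified at every $x_0 \in \mathcal{F}E$, countable $\hno$-rectifiability follows from a Whitney-type decomposition: the uniform density bounds on $|D\ca_E|$ on $\mathcal{F}E$ together with the existence of an approximate tangent hyperplane at every point place $\mathcal{F}E$ in the scope of classical rectifiability criteria (the relevant statement is essentially Marstrand's / Preiss' theorem on sets with tangent planes, combined with the structure theorem for $BV$ functions). This gives Lipschitz graphs covering $\hno$-almost all of $\mathcal{F}E$. For the density formula, I would exploit the identity $|D\ca_E| = \hno \restr \mathcal{F}E$, which is itself a consequence of the rectifiability together with the fact that the blow-up limit of $|D\ca_E|/\rho^{N-1}$ at $x_0$ is precisely $\hno \restr (\nu_E(x_0)^\perp)$. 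Applying the Besicovitch differentiation theorem (see \Cref{thm:Bes}) with cubes $Q(x_0, \rho)$ having two faces orthogonal to $\nu_E(x_0)$ and using the upper bound $\hno(H \cap \partial Q(x_0,\rho)) = 0$ for $\mathcal{L}^1$-almost every $\rho$, the scaling identity gives
\[
\frac{\hno(\mathcal{F}E \cap Q(x_0, \rho))}{\rho^{N-1}} \to \hno(H \cap Q(0,1) \cap \nu_E(x_0)^\perp) = 1.
\]

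The most delicate step, and the one I expect to be the main obstacle, is the rigidity argument that forces $E_\infty$ to be a half-space. A priori, the subsequential limit is only known to be a set of locally finite perimeter with constantly-directed distributional derivative; deducing from this that it is (up to translation) a half-space requires either the slicing argument sketched above or a careful use of the coarea formula, and it is precisely here that the scale-invariance of the blow-up procedure must be combined with the directional information encoded in $\nu_E(x_0)$.
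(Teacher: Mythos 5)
This statement is quoted by the paper as a classical background result (\cite[Theorem 3.59]{AFP}); the paper supplies no proof of its own, so there is nothing internal to compare against. Your sketch follows the standard De Giorgi blow-up strategy, and the overall architecture is sound, but one step as written is circular and one step invokes heavier machinery than is needed.

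The circularity is in the identification of the blow-up limit. You argue that $E_\infty = \{y : y\cdot\nu_E(x_0) \ge a\}$ and then pin down $a = 0$ by appealing to the density $\mathcal{L}^N(E\cap B(x_0,\rho))/\mathcal{L}^N(B(x_0,\rho)) \to 1/2$. But the definition of $\mathcal{F}E$ used in the paper (and in \cite{AFP}) only asserts that $x_0 \in \mathrm{supp}|D\ca_E|$ and that the averaged normal $\nu_E(x_0)$ exists with unit length; the $1/2$-density statement is the inclusion $\mathcal{F}E \subset \partial^{1/2}E$, which is a \emph{consequence} of the blow-up theorem (it is the content of Federer's theorem, \Cref{thm:Fed}), not an input to it. The standard way to close this gap without circularity is to observe that $0 \in \mathrm{supp}|D\ca_{E_\rho}|$ for every $\rho$, and that the lower perimeter-density bound (from the relative isoperimetric inequality, which you already invoke) is scale-invariant and passes to the limit, giving $0 \in \mathrm{supp}|D\ca_{E_\infty}|$. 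Since the support of $D\ca_{E_\infty}$ for a half-space $\{y\cdot\nu \ge a\}$ is the hyperplane $\{y\cdot\nu = a\}$, this forces $a = 0$ directly.

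On rectifiability, invoking Marstrand/Preiss (rectifiability from tangent measures or densities) would indeed yield the conclusion, but it is a substantially deeper theorem than De Giorgi's and postdates it. The classical argument is more elementary: once the blow-up at every $x_0 \in \mathcal{F}E$ is a half-space with normal $\nu_E(x_0)$, one covers $\mathcal{F}E$ (up to $\hno$-null sets) by countably many pieces on which $\nu_E$ is nearly constant and on which a two-sided cone condition holds; each such piece is then a Lipschitz graph by the standard cone criterion. This is what underlies the treatment in \cite{AFP}, and it avoids any appeal to tangent-measure rigidity results. Your reduction of the density formula to Besicovitch differentiation and the identity $|D\ca_E| = \hno\restr\mathcal{F}E$ is fine once that identity is established; note however that the identity itself is part of what must be proved, and it comes out of the same covering argument used for rectifiability, so it should be presented in that order rather than cited as an independent black box.
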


Finally, we define the notion of Caccioppoli partitions. This will be useful in the approximation results in order to get the limsup inequality (see \Cref{sec:limsup}).

\begin{definition}
\label{CP}
A partition $\{\o^i\}_{i \in \N}$ of $\o$ is called a \emph{Caccioppoli partition} if each $\o^i$ is a set of finite perimeter in $\o$.
\end{definition}

\begin{remark}
\label{CPR}
Notice that for every $u \in BV(\o; z_1, \dots, z_k)$ (see \Cref{f0}) there exists a Caccioppoli partition $\{\o^1,\dots,\o^k\}$ of $\o$ such that $u(x) = z_i(x)$ for $\mathcal{L}^N$-a.e.\@ $x \in \o^i$, for all $i \in \{1,\dots,k\}$, and
\[
\hno\restr J_u = \sum_{i=1}^{k-1} \sum_{j=i+1}^k \hno\restr(\partial^*\o^i\cap\partial^*\o^j ).
\]
Moreover, using \cite[Theorem 3.84]{AFP}, it is possible to show that the distributional derivative of each $u\in BV(\o;z_1,\dots,z_k)$ has no Cantor part.
\end{remark}

%%%%%%%%%%%%%%%%%%%%%%%%%%%%%%%%%%%%%%%%%%%%%%%%
%%%%%%%%%%%%%%%%%%%%%%%%%%%%%%%%%%%%%%%%%%%%%%%%

\subsection{\texorpdfstring{$\Gamma$}{Gamma}-convergence}
We now recall the basic definition and some properties of $\Gamma$-convergence that will be used throughout the paper (for a reference see \cite{B,DM}).

\begin{definition}
Let $(X, d)$ be a metric space. We say that a sequence of functions $F_n \colon X \to \R\cup\{\infty\}$ $\Gamma$-converges to $F \colon X \to \R\cup\{\infty\}$, and we write $F_n \stackrel{\Gamma-d}{\longrightarrow} F$, if the following hold:
\begin{itemize}
\item[$(i)$] for every $x \in X$ and every sequence $\{x_n\}_{n \in \N}$ of elements of $X$ such that $x_n \to x$ we have
\[
F(x) \le \liminf_{n \to \infty} F_n(x_n);
\]
\item[$(ii)$] for every $x \in X$ there exists a sequence $\{x_n\}_{n\in\N}$ of elements of $X$ such that $x_n \to x$ and
\[
\limsup_{n \to \infty} F_n(x_n)\le F(x).
\]
\end{itemize} 
A sequence $\{x_n\}_{n\in\N}$ as in $(ii)$ is called a \emph{recovery sequence} for $x$.
\end{definition}

We recall that the definition of $\Gamma$-convergence is primarily motivated by seeking minimal conditions which guarantee the convergence of minima and minimizers for a family of functionals (see, for example, \cite[Corollary 7.20]{DM}). This is specified in the following theorem.

\begin{theorem}\label{thm:convmin}
Let $(X,d)$ be a metric space, let $F_n, F \colon X \to \R\cup\{\infty\}$ and assume that $F_n \stackrel{\Gamma-d}{\longrightarrow} F$. For each $n \in \N$, let $x_n \in X$ be a minimizer of $F_n$ on $X$. Then every cluster point of $\{x_n\}_{n \in \N}$ is a minimizer of $F$ and
\[
\lim_{n \to \infty} F_n(x_n) = \min \{F(x) : x \in X\}.
\]
\end{theorem}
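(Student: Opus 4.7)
The plan is to extract the whole statement from the two defining properties of $\Gamma$-convergence by combining the liminf inequality applied to $\{x_n\}$ with recovery sequences for arbitrary test points, and then leveraging minimality.

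First I would pick a cluster point $x \in X$ of the sequence of minimizers $\{x_n\}_{n \in \N}$ and extract a subsequence $\{x_{n_k}\}_{k \in \N}$ with $x_{n_k} \to x$ in $(X,d)$. Property $(i)$ of $\Gamma$-convergence, applied along this subsequence, immediately gives
\[
F(x) \le \liminf_{k \to \infty} F_{n_k}(x_{n_k}).
\]
To show that $x$ is in fact a minimizer of $F$, I would fix an arbitrary $y \in X$ and, using property $(ii)$, select a recovery sequence $\{y_n\}_{n \in \N} \subset X$ with $y_n \to y$ and $\limsup_{n \to \infty} F_n(y_n) \le F(y)$. The minimality of each $x_n$ for $F_n$ then yields $F_{n_k}(x_{n_k}) \le F_{n_k}(y_{n_k})$, and chaining inequalities gives
\[
F(x) \le \liminf_{k \to \infty} F_{n_k}(x_{n_k}) \le \limsup_{n \to \infty} F_n(y_n) \le F(y).
\]
Since $y \in X$ was arbitrary, this proves $F(x) = \inf_{X} F$, hence that $x$ is a minimizer of $F$ and that the infimum is attained (so the minimum in the statement is well defined).

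For the convergence of the minimum values, I would apply the same recovery-sequence argument, but now choosing $y = x$ (any cluster point works, since it minimizes $F$). This gives
\[
\limsup_{n \to \infty} F_n(x_n) \le \limsup_{n \to \infty} F_n(y_n) \le F(x) = \min_{X} F.
\]
For the matching lower bound, I would take any subsequence $\{x_{n_j}\}_{j\in\N}$ realizing $\liminf_n F_n(x_n)$; the existence of a cluster point allows passing to a further subsequence converging to some $x^* \in X$, and the liminf inequality together with the first part gives
\[
\min_X F \le F(x^*) \le \liminf_{j \to \infty} F_{n_j}(x_{n_j}) = \liminf_{n \to \infty} F_n(x_n).
\]
Combining the two estimates yields $\lim_{n \to \infty} F_n(x_n) = \min_X F$.

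The only subtle point is that the second conclusion implicitly relies on $\{x_n\}$ admitting at least one convergent subsequence; in the present paper this compactness is provided by the companion compactness statements (see \Cref{thm:main}, \Cref{thm:massconstr}, and \Cref{thm:Dirichlet}), which guarantee precompactness in the $L^1$ topology for sequences with uniformly bounded energy. No further ingredient is needed — everything else is a direct and essentially mechanical combination of $(i)$, $(ii)$, and the minimality of $x_n$.
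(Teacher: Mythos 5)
Your argument is the standard one, and the paper itself does not supply a proof (it simply cites \cite[Corollary 7.20]{DM}), so there is no paper argument to compare against; the strategy of combining the liminf inequality along a convergent subsequence with recovery sequences and the minimality of $x_n$ is exactly the textbook proof. The chain $F(x) \le \liminf_k F_{n_k}(x_{n_k}) \le \limsup_n F_n(y_n) \le F(y)$ is correct, as is the upper bound $\limsup_n F_n(x_n) \le \min\{F(y) : y \in X\}$.

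There is, however, a genuine gap in the lower-bound step. You pick a subsequence $\{x_{n_j}\}$ realizing $\liminf_n F_n(x_n)$ and then assert that "the existence of a cluster point allows passing to a further subsequence converging to some $x^*$." That inference does not hold: a single cluster point of the full sequence $\{x_n\}$ does not furnish a cluster point for an arbitrary subsequence (take $x_n = 0$ for $n$ even and $x_n = n$ for $n$ odd: the even subsequence gives a cluster point, the odd one has none). What is actually needed is precompactness of $\{x_n\}$, \emph{i.e.}, that \emph{every} subsequence admits a convergent sub-subsequence --- the notion you correctly name only in your final remark. And this is not pedantry: without some such equi-coercivity the second conclusion is simply false. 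With $X = \R$ and $F_n(x) = (x-n)^2 + 1/n$, one has $F_n \stackrel{\Gamma}{\longrightarrow} F \equiv \infty$, yet the minimizers $x_n = n$ give $F_n(x_n) = 1/n \to 0 \neq \min_X F$. So the theorem as stated in the paper is loose and implicitly requires the equi-coercivity hypothesis present in Dal Maso's Corollary 7.20, which is exactly what the compactness statements of \Cref{thm:main}, \Cref{thm:massconstr}, and \Cref{thm:Dirichlet} supply in this paper's applications. Replacing "the existence of a cluster point" by "precompactness of $\{x_n\}$" in the body of your argument makes the proof correct.
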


%%%%%%%%%%%%%%%%%%%%%%%%%%%%%%%%%%%%%%%%%%%%%%%%
%%%%%%%%%%%%%%%%%%%%%%%%%%%%%%%%%%%%%%%%%%%%%%%%
%%%%%%%%%%%%%%%%%%%%%%%%%%%%%%%%%%%%%%%%%%%%%%%%
%%%%%%%%%%%%%%%%%%%%%%%%%%%%%%%%%%%%%%%%%%%%%%%%

\section{Existence of minimizing geodesics}\label{sec:tech}
The purpose of this section is to collect some preliminary results concerning the existence of minimizing geodesics for possibly degenerate metrics with conformal factor $F$. To be precise, given a continuous nonnegative function $F$ and $p, q \in \R^M$, we study the minimization problem
\begin{equation}
\label{dF}
d_F(p,q) \coloneqq \inf \left\{\int_{-1}^1F(\gamma(t))|\gamma'(t)|\,dt : \gamma \in \mathcal{A}(p,q)\right\},
\end{equation}
where the class of admissible parametrizations $\mathcal{A}(p,q)$ is given as in \Cref{defdW}. Notice that the value of the integral on the right-hand side of (\ref{dF}) is a purely geometric quantity, \emph{i.e.}, it is independent of the choice of the parametrization. Throughout the rest of the paper, we refer to any function $\gamma \in \mathcal{A}(p,q)$ for which the infimum on the right-hand side of (\ref{dF}) is achieved as a \emph{minimizing geodesic}. Moreover, we use the phrase \emph{sequence of almost minimizing geodesics} to denote a sequence in $\mathcal{A}(p,q)$ for which the infimum is achieved in the limit.

Let us remark that the existence of minimizing geodesics for (\ref{dF}) has been previously investigated by many authors. We mention here the work of Zuniga and Sternberg \cite{ZunSter}, where existence of solutions to the minimization problem is shown under very general assumptions on the conformal factor $F$. Of particular interest for our analysis is the special case where the conformal factor is given by
\begin{equation}
\label{F=W}
F(z) \coloneqq 2 \sqrt{W(x,z)}.
\end{equation}
Indeed, we observe that for a fixed value of $x \in \o$, if $F$ is given as above, the distance function $d_F$ defined in (\ref{dF}) is identically equal to the function $d_W(x, \cdot, \cdot)$, introduced in \Cref{defdW}. 

As the proofs of our main results rely on a precise understanding of minimizing geodesics for (\ref{dF}), and in particular on their dependence on the variable $x$ when $F$ is chosen as in (\ref{F=W}), compared to \cite{ZunSter} we require more stringent assumptions on the behavior of the potential $W$ near the wells (see (\ref{H3})). In turn, our approach is in spirit closer to that of Sternberg \cite{Ste_Vect}, where the author considered a singular perturbation of the conformal factor which renders the associated Riemannian metric conformal to the Euclidean metric and proceeded to prove a uniform bound with respect to the perturbation parameter. Our method, on the other hand, consists of proving a uniform bound on the Euclidean length of a sequence of almost minimizing geodesics.
For technical reasons, we will need to consider conformal factors of the form
\[
F(z) \coloneqq \inf \left\{2\sqrt{W(x,z)} : x \in \mathcal{R} \right\},
\]
where $\mathcal{R} \subset \o$.

In the following, given a function $\gamma \in W^{1,1}(\mathcal{I};\R^M)$, where $\mathcal{I} \subset \R$ is an open interval, we work with its representative in $AC(\overline{\mathcal{I}};\R^M)$ and denote its Euclidean length by $L(\gamma)$, \emph{i.e.}, 
\begin{equation}
\label{lengthfunctional}
L(\gamma) \coloneqq \int_{\mathcal{I}}|\gamma'(t)|\,dt.
\end{equation}
For any two points $p,q \in \R^M$, we let $\ell_{p,q}$ be a parametrization in $\mathcal{A}(p,q)$ of the line segment that joins $p$ to $q$. To be precise, for $t \in [-1,1]$ we let
\begin{equation}
\label{lpq}
\ell_{p,q}(t) \coloneqq \frac{1 - t}{2}p + \frac{1 + t}{2}q.
\end{equation}

We begin by presenting a compactness criterion for almost minimizing geodesics. The result states that the existence of a sequence of almost minimizing geodesic for (\ref{dF}) with a uniform bound on the Euclidean length of each element in the sequence implies the existence of a minimizing geodesic which enjoys the same bound. The proof is adapted from the classical result on the existence of shortest paths, \emph{i.e.\@}, minimizers of the length functional (\ref{lengthfunctional}) (see, for example, \cite[Theorem 5.38]{L}).

\begin{lemma}
\label{L<k}
Given a continuous function $F \colon \R^M \to [0,\infty)$ and $p,q \in \R^M$, let $d_F(p,q)$ be given as in $(\ref{dF})$, $\{\gamma_n\}_{n \in \N} \subset \mathcal{A}(p,q)$ be a sequence of almost minimizing geodesics, \emph{i.e.}, 
\[
d_F(p,q) = \lim_{n \to \infty}\int_{-1}^1F(\gamma_n(t))|\gamma_n'(t)|\,dt,
\] 
and furthermore assume that $L(\gamma_n) \le \Lambda$ for some positive constant $\Lambda$ independent of $n$. Then there exists $\gamma \in \mathcal{A}(p,q)$ with $L(\gamma) \le \Lambda$ such that 
\[
d_F(p,q) = \int_{-1}^1F(\gamma(t))|\gamma'(t)|\,dt.
\]
\end{lemma}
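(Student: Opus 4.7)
The plan is to exploit the reparametrization invariance of the integral defining $d_F(p,q)$. Since $L(\gamma_n) \leq \Lambda$, for each $n$ with $L(\gamma_n) > 0$ I would reparametrize $\gamma_n$ by a constant multiple of arc length, so that the new curve $\tilde\gamma_n \in \mathcal{A}(p,q)$ satisfies $|\tilde\gamma_n'(t)| \equiv L(\gamma_n)/2 \leq \Lambda/2$ for a.e.\ $t \in (-1,1)$. Because the integrand $F(\gamma)|\gamma'|$ is geometric, $\int_{-1}^1 F(\tilde\gamma_n)|\tilde\gamma_n'|\,dt = \int_{-1}^1 F(\gamma_n)|\gamma_n'|\,dt$, so $\{\tilde\gamma_n\}$ is still an almost minimizing sequence. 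The case $L(\gamma_n) = 0$ forces $p = q$ and is trivial.

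The reparametrized sequence is equi-Lipschitz and uniformly bounded (each curve starts at $p$ and has length at most $\Lambda$), so by the Arzel\`a--Ascoli theorem a subsequence, not relabeled, converges uniformly on $[-1,1]$ to some $\gamma \in C([-1,1]; \R^M)$ with $\gamma(-1) = p$ and $\gamma(1) = q$. The uniform $W^{1,\infty}$ bound lets me extract a further subsequence so that $\tilde\gamma_n' \wtom \gamma'$ in $L^\infty$; the lower semicontinuity of the $L^1$ norm under this convergence then yields
\[
L(\gamma) \leq \liminf_{n\to\infty} L(\tilde\gamma_n) \leq \Lambda,
\]
which is the length bound demanded in the statement. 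In particular $\gamma \in \mathcal{A}(p,q)$.

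To conclude that $\gamma$ achieves the infimum, I would prove the lower semicontinuity
\[
\int_{-1}^1 F(\gamma(t))|\gamma'(t)|\,dt \leq \liminf_{n\to\infty} \int_{-1}^1 F(\tilde\gamma_n(t))|\tilde\gamma_n'(t)|\,dt.
\]
All curves take values in a common compact subset of $\R^M$ (the closed $\Lambda/2$-neighborhood of $p$), so by continuity of $F$ the sequence $F \circ \tilde\gamma_n$ converges uniformly to $F \circ \gamma$ on $[-1,1]$. Splitting
\[
\int_{-1}^1 F(\tilde\gamma_n)|\tilde\gamma_n'|\,dt = \int_{-1}^1 F(\gamma)|\tilde\gamma_n'|\,dt + \int_{-1}^1 \bigl(F(\tilde\gamma_n) - F(\gamma)\bigr)|\tilde\gamma_n'|\,dt,
\]
the second term vanishes in the limit since $\{|\tilde\gamma_n'|\}$ is bounded in $L^1$. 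For the first term, I treat $F(\gamma(\cdot)) \geq 0$ as a fixed $L^\infty$ weight; the functional $\psi \mapsto \int F(\gamma)|\psi|\,dt$ is convex and continuous on $L^1$, hence weakly lower semicontinuous, and the weak-$*$ convergence $\tilde\gamma_n' \wtom \gamma'$ gives the desired bound.

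The main technical point I expect to require care is the last decoupling step: the dependence on $\gamma_n$ inside $F$ is nonlinear, while the dependence on $\gamma_n'$ is only weakly compact. Splitting off the uniformly convergent factor $F \circ \tilde\gamma_n$ is precisely what allows the convex lower semicontinuity for the linear-in-$\gamma'$ part to be applied cleanly. With that in hand, $\gamma$ is a minimizing geodesic with $L(\gamma) \leq \Lambda$, as claimed.
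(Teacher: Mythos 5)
Your proof is correct and follows the same overall structure as the paper's: reparametrize by constant speed, apply Arzel\`a--Ascoli, use lower semicontinuity of the length functional, and then prove lower semicontinuity of the energy. The one place where you diverge from the paper is the final lower-semicontinuity step. The paper exploits the fact that the reparametrized curves have \emph{constant} speed $|w_n'(t)| = L(\gamma_n)/2$ (not merely a bounded speed): it pulls this constant out of the integral, so that only the uniform convergence of $F\circ w_n \to F\circ \gamma$ is needed, together with the pointwise Lipschitz bound $|\gamma'|\le \lambda/2$ inherited from the uniform limit. No weak-$*$ compactness of the derivatives enters at all. Your version instead extracts a weak-$*$ convergent subsequence $\tilde\gamma_n' \wtom \gamma'$ in $L^\infty$, splits off the uniformly convergent factor $F\circ\tilde\gamma_n$, and invokes convexity plus weak lower semicontinuity of $\psi\mapsto\int F(\gamma)|\psi|\,dt$. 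Both arguments are valid; yours is the more general technique (it would also apply if the speeds were merely bounded rather than constant), while the paper's is a bit more elementary, as it never leaves the realm of uniform convergence. One small point worth making explicit in your write-up: weak-$*$ convergence of $\tilde\gamma_n'$ in $L^\infty((-1,1);\R^M)$ does imply weak convergence in $L^1$ here because the interval is bounded, so the $L^\infty$ test functions lie inside $L^1$; that is what lets you use weak lower semicontinuity of the convex $L^1$-functional.
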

\begin{proof}
Notice that if $p = q$ then there is nothing to do. Thus, we can assume without loss of generality that $\gamma_n \colon [-1,1] \to \R^M$ is a parametric representation of a continuous simple rectifiable curve. In turn, it can be parametrized by arclength, \emph{i.e.}, there exists a function $\varphi_n \colon [0,L(\gamma_n)] \to [-1,1]$ with the property that 
\[
v_n(s) \coloneqq \gamma_n(\varphi_n(s))
\]
is Lipschitz continuous, and in particular $|v_n'(s)| = 1$ for $\mathcal{L}^1$-a.e.\@ $s \in (0,L(\gamma_n))$. Eventually extracting a subsequence (which we do not relabel), we can assume that $L(\gamma_n) \to \lambda$ for some $\lambda > 0$. Let $\psi_n \colon [-1,1] \to [0, L(\gamma_n)]$ be defined via 
\[
\psi_n(t) \coloneqq \frac{(t + 1)L(\gamma_n)}{2},
\]
and set $w_n(t) \coloneqq v_n(\psi_n(t))$. Notice that the functions $w_n \in \mathcal{A}(p,q)$ and satisfy 
\begin{equation}
\label{|wn'|}
|w_n'(t)| = \frac{L(\gamma_n)}{2} \le \frac{\lambda + 1}{2}
\end{equation}
for $\mathcal{L}^1$-a.e.\@ $t \in (-1,1)$ and every $n$ sufficiently large. Consequently, we are in a position to apply the Ascoli-Arzel\'a theorem to find a function $\gamma \colon [-1,1] \to \R^M$ and a further subsequence (not relabeled) such that $w_n \to \gamma$ uniformly. Furthermore, since $L(\cdot)$ is lower semicontinuous with respect to pointwise convergence, we also get
\[
L(\gamma) \le \liminf_{n \to \infty}L(\gamma_n) = \lim_{n \to \infty}L(\gamma_n) = \lambda \le \Lambda.
\]
Finally, in view of (\ref{|wn'|}), we notice that for every $s,t \in (-1,1)$ we have
\[
|\gamma(s) - \gamma(t)| = \lim_{n \to \infty}|w_n(s) - w_n(t)|\le \lim_{n \to \infty}\frac{L(\gamma_n)}{2}|s - t| = \frac{\lambda}{2}|s - t|,
\]
and thus
\begin{align*}
\int_{-1}^1F(\gamma(t))|\gamma'(t)|\,dt \le \frac{\lambda}{2} \int_{-1}^1F(\gamma(t))\,dt & = \left(\lim_{n \to \infty}\frac{L(\gamma_n)}{2}\right)\left( \lim_{n \to \infty}\int_{-1}^1F(w_n(t))\,dt \right)\\
& = \lim_{n \to \infty} \int_{-1}^1F(w_n(t))|w_n'(t)|\,dt \\
& = \lim_{n \to \infty} \int_{-1}^1F(\gamma_n(t))|\gamma_n'(t)|\,dt = d_F(p,q).
\end{align*}
This concludes the proof.
\end{proof}

With \Cref{L<k} in hand, we can turn our attention back to the minimization problem (\ref{dF}).

\begin{proposition}
\label{dFprop}
Let $W$ be given as in \emph{(\ref{H1})--(\ref{H3})}, and assume that \eqref{Hold4} holds for some $\eta > 0$.
Let $\mathcal{R}$ be a convex compact subset of $\o$, and denote by $z_i(\mathcal{R})$ the set of points $z \in \R^M$ such that $z = z_i(x)$ for some $x \in \mathcal{R}$. Assume that 
\begin{equation}
\label{z_iRfar}
\mathcal{N}_{\delta/2}(z_i(\mathcal{R})) \cap \mathcal{N}_{\delta/2}(z_j(\mathcal{R})) = \emptyset
\end{equation}
whenever $i \neq j$, where 
\[
\mathcal{N}_{\rho}(z_i(\mathcal{R})) \coloneqq \left\{z \in \R^M : |z - z_i(x)| \le \rho \ \text{ for some } x \in \mathcal{R}\right\},
\]
define
\[
F(z) \coloneqq \min\left\{2\sqrt{W(x,z)} : x \in \mathcal{R}\right\},
\]
and let $d_F \colon \R^M \times \R^M \to [0,\infty)$ be given as in $(\ref{dF})$. Then for every $p,q \in \R^M$ there exists a minimizing geodesic $\gamma \in \mathcal{A}(p,q)$ for $d_F(p,q)$ such that 
\begin{equation}
\label{LS}
L(\gamma) \le k\sigma + \diam(\mathcal{R})\sum_{i = 1}^k\Lip(z_i) + \frac{d_F(p,q) + 1}{\Sigma_\sigma(\mathcal{R})},
\end{equation}
where, for $\eta$ as in $(\ref{Hold4})$,
\begin{equation}
\label{sigma}
\sigma \coloneqq \frac{1}{2} \min\left\{r, \delta, \frac{\min_i \alpha_i}{\max_i \alpha_i}\delta, \sqrt{\frac{\eta}{\max_i\alpha_i}}\right\}
\end{equation}
and 
\begin{equation}
\label{Sigma}
\Sigma_{\sigma}(\mathcal{R}) \coloneqq \min\left\{F(z) : z \notin \bigcup_{i = 1}^k\mathcal{N}_{\sigma/2}(z_i(\mathcal{R}))\right\}.\end{equation}
\end{proposition}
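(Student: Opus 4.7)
My strategy is to apply \Cref{L<k}: starting from an arbitrary sequence $\{\tilde\gamma_n\}\subset\mathcal{A}(p,q)$ of almost minimizing geodesics for $d_F(p,q)$, I will modify each $\tilde\gamma_n$ into a curve $\gamma_n\in\mathcal{A}(p,q)$ whose $F$-length does not exceed the original one but whose Euclidean length is uniformly bounded by the right-hand side of \eqref{LS}, so that \Cref{L<k} furnishes the required minimizing geodesic. As a preliminary step, I would combine \eqref{H3} with the separation \eqref{z_iRfar} and the bound $\sigma\le\min\{r,\delta\}/2$ to derive the explicit formula
\[
F(z)=2\sqrt{\alpha_i}\,\dist(z,z_i(\mathcal{R}))\eqqcolon 2\sqrt{\alpha_i}\,d_i(z)\quad\text{for every } z\in\mathcal{N}_\sigma(z_i(\mathcal{R})),
\]
and verify, using the remaining conditions in \eqref{sigma} together with \eqref{Hold4}, that $\Sigma_\sigma(\mathcal{R})$ is strictly positive.

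The modification has two parts. For the bulk contribution, set $B_n\coloneqq\{t\in(-1,1):\tilde\gamma_n(t)\notin\bigcup_i\mathcal{N}_{\sigma/2}(z_i(\mathcal{R}))\}$ and use the definition \eqref{Sigma} to write
\[
\int_{B_n}|\tilde\gamma_n'(t)|\,dt\le\frac{1}{\Sigma_\sigma(\mathcal{R})}\int_{B_n}F(\tilde\gamma_n(t))|\tilde\gamma_n'(t)|\,dt\le\frac{d_F(p,q)+1}{\Sigma_\sigma(\mathcal{R})}
\]
for all $n$ sufficiently large, which produces the last term in \eqref{LS}. For the contribution inside the well neighborhoods, I would replace each maximal subarc of $\tilde\gamma_n$ contained in $\mathcal{N}_{\sigma/2}(z_i(\mathcal{R}))$ by a shortcut of the form \emph{radial segment $+$ curve in $z_i(\mathcal{R})$ $+$ radial segment}, exactly along the lines of cases (i)--(iii) in the sketch of strategy. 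A radial segment from an endpoint $z$ to its projection $z'\in z_i(\mathcal{R})$ has Euclidean length $d_i(z)\le\sigma/2$ and, since $d_i$ decreases linearly along it, $F$-length $\sqrt{\alpha_i}d_i(z)^2$; the middle piece has $F$-length zero and Euclidean length at most $\Lip(z_i)\diam(\mathcal{R})$. Applying the coarea formula to $d_i\circ\tilde\gamma_n$ on the subarc and using $F=2\sqrt{\alpha_i}d_i$ provides a matching lower bound for the $F$-length of the original, so the replacement does not increase $d_F$. Finally, iteratively merging any repeated visits to the same $\mathcal{N}_{\sigma/2}(z_i(\mathcal{R}))$ (two shortcuts through $z_i(\mathcal{R})$ can be concatenated into one, strictly decreasing the Euclidean length at no $F$-cost) reduces the number of distinct visits to at most $k$, and summing the per-well contributions yields the total $k\sigma+\diam(\mathcal{R})\sum_i\Lip(z_i)$.

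The main technical obstacle is the coarea-based comparison between the original subarc and its shortcut in case (iii) (both endpoints lying in $\mathcal{N}_{\sigma/2}(z_i(\mathcal{R}))\setminus z_i(\mathcal{R})$): one must handle the fact that $d_i\circ\tilde\gamma_n$ may oscillate several times on the subarc, which requires recasting the lower bound on $\int F(\tilde\gamma_n)|\tilde\gamma_n'|$ in terms of the total variation of $d_i\circ\tilde\gamma_n$ and then invoking the elementary identity $\int_0^h 2\sqrt{\alpha_i}s\,ds=\sqrt{\alpha_i}h^2$. A secondary difficulty is ensuring that the various replacements, together with the unaltered bulk portions, concatenate into an admissible $W^{1,1}$ parametrization in $\mathcal{A}(p,q)$; this is a routine but lengthy reparametrization argument. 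Once these steps are in place, the uniform Euclidean bound is established and \Cref{L<k} produces the desired minimizing geodesic.
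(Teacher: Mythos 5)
Your high-level strategy coincides with the paper's: modify a sequence of almost minimizing geodesics so that the Euclidean length of each modified curve is uniformly bounded by the right-hand side of \eqref{LS}, and then invoke \Cref{L<k}. The preliminary steps (the identity $F = 2\sqrt{\alpha_i}\,d_i$ on $\mathcal{N}_\sigma(z_i(\mathcal{R}))$, the bulk estimate via $\Sigma_\sigma(\mathcal{R})$, the treatment of cases (i) and (ii) via the co-area formula, and the reduction to at most $k$ replacements) are sound and essentially identical to what the paper does in its Steps 1--3 and 5.

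The genuine gap is exactly in the spot you flag as the main technical obstacle, case (iii), and the fix you propose does not close it. Suppose the maximal subarc in $\mathcal{N}_{\sigma/2}(z_i(\mathcal{R}))$ connects $p$ to $q$ without ever touching $z_i(\mathcal{R})$, so that $c\coloneqq\min_t d_i(\gamma_n(t))>0$. The co-area bound you invoke,
\[
\int F(\gamma_n)|\gamma_n'|\,dt \ \ge\ 2\sqrt{\alpha_i}\int d_i(\gamma_n)\Bigl|\tfrac{d}{dt}d_i(\gamma_n)\Bigr|\,dt \ =\ 2\sqrt{\alpha_i}\int_0^\infty y\,\mathcal{H}^0\bigl(\{t : d_i(\gamma_n(t))=y\}\bigr)\,dy,
\]
never sees levels $y<c$, and even after accounting for oscillations (which only increase the count) the best it yields is $\sqrt{\alpha_i}\bigl(d_i(p)^2+d_i(q)^2-2c^2\bigr)$. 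This falls short of the $F$-length $\sqrt{\alpha_i}\bigl(d_i(p)^2+d_i(q)^2\bigr)$ of your shortcut by $2\sqrt{\alpha_i}c^2$. So the claim that "the replacement does not increase $d_F$" is unproven, and with it the claim that the modified sequence is still almost minimizing; recasting in terms of the total variation of $d_i\circ\gamma_n$ and using $\int_0^h 2\sqrt{\alpha_i}s\,ds=\sqrt{\alpha_i}h^2$ gives the same weighted total-variation bound and therefore does not help.

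What the paper does in Step 4, Substep 2 is genuinely different and uses information that the co-area bound discards, namely the full Euclidean length $L=L(\gamma_n)$ of the subarc. Reparametrizing by arclength to $v_n\colon[0,L]\to\R^M$, the map $s\mapsto d_i(v_n(s))$ is $1$-Lipschitz with $d_i(v_n(0))=d_i(p)$, $d_i(v_n(L))=d_i(q)$, and $d_i(v_n)\ge c$. The auxiliary curve $W_n$ is built so that $d_i(W_n(s))\le d_i(v_n(s))$ pointwise (it realizes the slowest possible descent compatible with the Lipschitz and endpoint constraints), hence $F(W_n)\le F(v_n)$ on $\mathcal{N}_\sigma(z_i(\mathcal{R}))$, and then $\int_0^L F(W_n)$ is computed directly: the two radial pieces give $\sqrt{\alpha_i}(d_i(p)^2-c^2)+\sqrt{\alpha_i}(d_i(q)^2-c^2)$ and the "lateral" middle piece, of length $a_n+2c$ where $a_n=L-d_i(p)-d_i(q)\ge 0$, pays $F$-cost at least $\sqrt{\alpha_i}c$ per unit length, contributing at least $\sqrt{\alpha_i}c(a_n+2c)$. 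Summing recovers $\sqrt{\alpha_i}(d_i(p)^2+d_i(q)^2)+\sqrt{\alpha_i}c\,a_n$ and in particular fills in (and exceeds) the $2\sqrt{\alpha_i}c^2$ deficit. Without incorporating this length-based estimate, and separately dispatching the case $L(\gamma_n)\le d_i(p)+d_i(q)+\Lip(z_i)\diam(\mathcal{R})$ where no modification is needed, your case (iii) replacement step remains unjustified.
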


\begin{proof} In the following we let $d_i(z)$ denote the distance from a point $z \in \R^M$ to the set $z_i(\mathcal{R})$. We recall that the functions $d_i \colon \R^M \to [0,\infty)$ are Lipschitz continuous with Lipschitz constant at most 1. We divide the proof into several steps.
\newline
\textbf{Step 1:} We begin by showing that for $\sigma$ as in (\ref{sigma}), if $z \in \mathcal{N}_{\sigma}(z_i(\mathcal{R}))$ then 
\begin{equation}
\label{F=dist}
F(z) = 2\sqrt{\alpha_i}d_i(z).
\end{equation}
To this end, for $z \in \mathcal{N}_{\sigma}(z_i(\mathcal{R}))$ define $A \coloneqq \{x \in \mathcal{R} : |z - z_i(x)| \le r\}$ and $B \coloneqq \{y \in \mathcal{R} : |z - z_i(y)| > r\}$. We claim that 
\begin{equation}
\label{supAinfB}
\sup\left\{W(x,z) : x \in A\right\} \le \inf \left\{W(y,z) : y \in B\right\}.
\end{equation}
Indeed, for every $x \in A$ we have
\begin{equation}
\label{Wsigma1}
W(x,z) = \alpha_i|z - z_i(x)|^2 \le \alpha_i \sigma^2,
\end{equation}
while for $y \in B$, by (\ref{z_iRfar}) we obtain 
\begin{equation}
\label{Wsigma2}
W(y,z) = \alpha_j|z - z_j(y)| \ge \alpha_jd_j(z)^2 \ge \alpha_j\frac{\delta^2}{4},
\end{equation}
provided that $|z - z_j(y)| \le r$ for some $j \neq i$, and $W(y,z) \ge \eta$ otherwise, where $\eta > 0$ is the constant introduced in (\ref{Hold4}). In turn, inequality (\ref{supAinfB}) follows from (\ref{sigma}), (\ref{Wsigma1}), and (\ref{Wsigma2}). Notice in particular that (\ref{supAinfB}) implies that
\[
F(z) = \min\left\{2\sqrt{W(x,z)} : x \in A\right\} = \min\left\{2 \sqrt{\alpha_i}|z - z_i(x)| : x \in A\right\},
\]
and (\ref{F=dist}) readily follows.
\newline
\textbf{Step 2:} In this step we show that if $p \in \mathcal{N}_{\sigma}(z_i(\mathcal{R}))$, and $q \in z_i(\mathcal{R})$ realizes the distance, \emph{i.e.}, $d_i(p) = |p - q|$, then the line segment that joins $p$ and $q$ is a minimizing geodesic for $d_{F}(p,q)$. To see this, let $\gamma \in \mathcal{A}(p,q)$ and notice that the map $t \mapsto d_i(\gamma(t))$ is continuous, $d_i(\gamma(-1)) = |p - q|$, and $d_i(\gamma(1)) = 0$. Thus, by the mean value theorem, for every $y \in (0,|p - q|)$ there exists $t \in (-1,1)$ such that $d_i(\gamma(t)) = y$. We recall that the composite function $d_i \circ \gamma$ belongs to the space $W^{1,1}((-1,1))$ and that 
\begin{equation}
\label{CRineq}
\left|\frac{d}{dt}d_i(\gamma(t))\right| \le |\gamma'(t)|
\end{equation}
for $\mathcal{L}^1$-a.e.\@ $t \in (-1,1)$. 
Indeed, as remarked above, the function $d_i$ is Lipschitz continuous with Lipschitz constant at most 1, and thus by Rademacher's theorem it is differentiable for $\mathcal{L}^1$-a.e.\@ $t \in (-1,1)$.
Moreover, the composite function $d_i \circ \gamma \colon (-1,1) \to \R$ is absolutely continuous and therefore it is also differentiable for $\mathcal{L}^1$-a.e.\@ $t \in (-1,1)$.
If we now let $t \in (-1,1)$ be a point where both $d_i$ and $d_i \circ \gamma$ are differentiable we see that
\begin{align*}
\left|\frac{d}{dt}d_i(\gamma(t))\right| & = \left|\lim_{|h| \to 0}\frac{d_i(\gamma(t + h)) - d_i(\gamma(t))}{|h|}\right| \\
& = \lim_{|h| \to 0}\frac{|d_i(\gamma(t + h)) - d_i(\gamma(t))|}{|h|} \le \limsup_{|h| \to 0}\frac{|\gamma(t + h) - \gamma(t)|}{|h|} = |\gamma'(t)|,
\end{align*}
which gives \eqref{CRineq}.
By (\ref{F=dist}), (\ref{CRineq}), and by an application of the co-area formula (see \cite[Theorem 3.2.6]{AFP}), we get
\begin{align*}
\int_{-1}^1F(\gamma(t))|\gamma'(t)|\,dt & \ge 2\sqrt{\alpha_i} \int_{\{0 < d_i(\gamma(t)) < |p - q|\}}d_i(\gamma(t))|\gamma'(t)|\,dt \\
& \ge 2\sqrt{\alpha_i} \int_{\{0 < d_i(\gamma(t)) < |p - q|\}}d_i(\gamma(t))\left|\frac{d}{dt}d_i(\gamma(t))\right|\,dt \\
& = 2\sqrt{\alpha_i}\int_0^{|p - q|}y\mathcal{H}^0(\{t : d_i(\gamma(t)) = y\})\,d y \\
& \ge 2\sqrt{\alpha_i}\int_0^{|p - q|}y\,d y = \sqrt{\alpha_i}|p - q|^2.
\end{align*}
On the other hand, if we let $\ell_{p,q}$ be given as in (\ref{lpq}), as one can readily check, we have that $d_i(\ell_{p,q}(t)) = |\ell_{p,q}(t) - q|$ for every $t \in [-1,1]$ and $\mathcal{H}^0(\{t : |\ell_{p,q}(t) - q| = y\}) = 1$ for every $y \in (0, |p - q|)$. In turn, by the co-area formula we conclude that 
\begin{align}
\label{linetoz}
\int_{-1}^1F(\ell_{p,q}(t))|\ell_{p,q}'(t)|\,dt  & = 2\sqrt{\alpha_i}\int_{-1}^1d_i(\ell_{p,q}(t))|\ell_{p,q}'(t)|\,dt \notag \\
& = 2\sqrt{\alpha_i}\int_{-1}^1|\ell_{p,q}(t) - q||\ell_{p,q}'(t)|\,dt \notag \\
& = 2\sqrt{\alpha_i}\int_0^{|p - q|}y\mathcal{H}^0(\{t : |\ell_{p,q}(t) - q| = y\})\,d y \notag \\
& = \sqrt{\alpha_i}|p - q|^2.
\end{align}
This proves our claim.
\newline
\textbf{Step 3:} If $p,q \in z_i(\mathcal{R})$ then there are $x_1,x_2 \in \mathcal{R}$ such that $z_i(x_1) = p$ and $z_i(x_2) = q$. Let 
\[
\psi(t) \coloneqq \frac{1 - t}{2}x_1 + \frac{1 + t}{2}x_2,
\]
and notice that $\psi(t) \in \mathcal{R}$ for every $t \in [-1,1]$ since $\mathcal{R}$ is convex by assumption. Let $\gamma \colon [-1,1] \to \R^M$ be defined via $\gamma(t) \coloneqq z_i(\psi(t))$. Then $\gamma \in \mathcal{A}(p,q)$ is a minimizing geodesic for $d_F(p,q)$, and furthermore
\[
L(\gamma) \le \Lip(z_i)\diam(\mathcal{R}).
\]
Consequently, we see that if $p \in \mathcal{N}_{\sigma}(z_i(\mathcal{R}))$ and $q \in z_i(\mathcal{R})$ then any parametrization in $\mathcal{A}(p,q)$ of the line segment from $p$ to a closest point on $z_i(\mathcal{R})$, namely $p'$, together with any curve with support contained in $z_i(\mathcal{R})$ that connects $p'$ to $q$ gives a minimizing geodesic for $d_{F}(p,q)$.
\newline
\textbf{Step 4:} This step is concerned with the proof of the more delicate case where $p$ and $q$ are distinct points in $\mathcal{N}_{\sigma/2}(z_i(\mathcal{R}))$, neither of which lies on $z_i(\mathcal{R})$. Throughout the step, we assume without loss of generality that $d_i(p) \ge d_i(q)$. Let $\{\gamma_n\}_{n \in \N} \subset \mathcal{A}(p,q)$ be a sequence of almost minimizing geodesics for $d_F(p,q)$ and observe that it is possible to assume that 
\begin{equation}
\label{stay}
\gamma_n(t) \in \mathcal{N}_{\sigma}(z_i(\mathcal{R}))
\end{equation}
for every $(n,t) \in \N \times [-1,1]$. Indeed, if this is not the case then we can find two disjoint subintervals of $(-1,1)$, namely $\mathcal{I}_1 \coloneqq (s_1,t_1)$ and $\mathcal{I}_2 \coloneqq (s_2,t_2)$, such that 
\[
2d_i(\gamma_n(s_1)) = 2d_i(\gamma_n(t_2)) = d_i(\gamma_n(t_1)) = d_i(\gamma_n(s_2)) = \sigma
\]
and 
\[
\frac{\sigma}{2} \le d_i(\gamma_n(t)) \le \sigma
\] 
for all $t \in \mathcal{I}_1 \cup \mathcal{I}_2$. In turn, we have
\begin{equation}
\label{staysigma}
\int_{-1}^1F(\gamma_n(t))|\gamma_n'(t)|\,dt \ge 2\sqrt{\alpha_i} \int_{\mathcal{I}_1 \cup \mathcal{I}_2}d_i(\gamma(t))|\gamma'(t)|\,dt \ge \sqrt{\alpha_i}\sigma^2.
\end{equation}
On the other hand, let $p',q' \in z_i(\mathcal{R})$ be such that $d_i(p) = |p - p'|$ and $d_i(q) = |q - q'|$, and for $x_1, x_2 \in \mathcal{R}$ such that $p' = z_i(x_1)$ and $q' = z_i(x_2)$ define 
\begin{equation}
\label{competitorgeo}
\gamma(t) \coloneqq
\left\{
\begin{array}{ll}
\displaystyle (1 - 2t - 2)p + (2t + 2)p' & \text{ if } -1 < t < -1/2,\\
& \\
\displaystyle z_i\left(\left(\frac{1}{2} - t\right)x_1 + \left(t + \frac{1}{2}\right)x_2\right) & \text{ if } -\frac{1}{2} \le t < \frac{1}{2}, \\
& \\
\displaystyle (2 - 2t)q' + (2t - 1)q & \text{ if } \frac{1}{2} \le t < 1.
\end{array}
\right.
\end{equation}
Then $\gamma \in \mathcal{A}(p,q)$ is a parametric representation of the curve whose support constitutes of the line segments that join $p$ to $p'$ and $q'$ to $q$, together with an arc in $z_i(\mathcal{R})$ that connects $p'$ and $q'$. By means of a direct computation we see that 
\begin{equation}
\label{compG}
\int_{-1}^1F(\gamma(t))|\gamma'(t)|\,dt = \sqrt{\alpha_i}\left(d_i(p)^2 + d_i(q)^2\right).
\end{equation}
Comparing (\ref{staysigma}) and (\ref{compG}) shows that we can replace every $\gamma_n$ for which (\ref{stay}) does not hold with the function $\gamma$ defined in (\ref{competitorgeo}) and thus obtain a sequence of almost minimizing geodesics with the desired properties.

Next, we claim that there exists a minimizing geodesic for $d_F(p,q)$ with Euclidean length bounded from above by 
\begin{equation}
\label{EL4}
d_i(p) + d_i(q) + \Lip(z_i)\diam(\mathcal{R}).
\end{equation} 
In view of \Cref{L<k}, if the sequence $\{\gamma_n\}_{n \in \N}$ admits a subsequence $\{\gamma_{n_k}\}_{k \in \N}$ such that $L(\gamma_{n_k}) \le d_i(p) + d_i(q) + \Lip(z_i)\diam(\mathcal{R})$ then there is nothing to do. Notice also that if there exists a subsequence $\{\gamma_{n_j}\}_{j \in \N}$ with the property that
\[
\min\left\{d_i(\gamma_{n_j}(t) : t \in [-1,1])\right\} = d_i(\gamma_{n_j}(t_j)) = 0,
\]
then an application of the co-area formula yields
\begin{align*}
\int_{-1}^{t_j}F(\gamma_{n_j}(t))|\gamma_{n_j}'(t)|\,dt & \ge 2\sqrt{\alpha_i} \int_{-1}^{t_j}d_i(\gamma_{n_j}(t))\left|\frac{d}{dt}d(\gamma_{n_j}(t))\right|\,dt \\
% & = 2\sqrt{\alpha_i}\left(\int_{-1}^{t_j}d_i(\gamma_{n_j}(t))\left|\frac{d}{dt}d(\gamma_{n_j}(t))\right|\,dt + \int_{t_j}^1d_i(\gamma_{n_j}(t))\left|\frac{d}{dt}d(\gamma_{n_j}(t))\right|\,dt\right) \\
& = 2\sqrt{\alpha_i}\int_0^{d_i(p)}y\mathcal{H}^0(\{t \in (-1,t_j) : d_i(\gamma_{n_j}(t) = y)\})\,d y \ge \sqrt{\alpha_i}d_i(p)^2,
\end{align*}
and with similar computations in the interval $(t_j,1)$ we arrive at
\[
\int_{-1}^1F(\gamma_{n_j}(t))|\gamma_{n_j}'(t)|\,dt \ge \sqrt{\alpha_i}\left(d_i(p)^2 + d_i(q)^2\right) = \int_{-1}^1F(\gamma(t))|\gamma'(t)|\,dt
\]
where $\gamma$ is the function defined in (\ref{competitorgeo}) and the last equality follows from (\ref{compG}). In turn, $\gamma$ is a minimizing geodesic and the claim would follow in this case. Thus, throughout the rest of the step we assume that 
\begin{equation}
\label{min>0}
\min\left\{d_i(\gamma_n(t)) : t \in [-1,1]\right\} = d_i(\gamma_n(t_n)) > 0
\end{equation}
for every $n \in \N$. In particular, eventually passing to a subsequence (which we do not relabel), we can assume without loss of generality that $\gamma_n \colon [-1,1] \to \R^M$ is a parametric representation of a continuous simple rectifiable curve. Hence, it can be parametrized by arclength, \emph{i.e.}, there exists a function $\varphi_n \colon [0,L(\gamma_n)] \to [-1,1]$ with the property that 
\begin{equation}
\label{geoarc}
v_n(s) \coloneqq \gamma_n(\varphi_n(s))
\end{equation}
is Lipschitz continuous, and in particular $|v_n'(s)| = 1$ for $\mathcal{L}^1$-a.e.\@ $s \in (0,L(\gamma_n))$, where 
\begin{equation}
\label{Lgn}
L(\gamma_n) = d_i(p) + d_i(q) + a_n, \qquad a_n \ge 0.
\end{equation}
Our aim is to show that if (\ref{min>0}) and (\ref{Lgn}) hold, then the function $\gamma$ in (\ref{competitorgeo}) is a minimizing geodesic for $d_F(p,q)$. We prove this claim in two substeps. 
\newline
\emph{Substep 1:} If $d_i(p) = d_i(\gamma_n(t_n))$, then $d_i(p) = d_i(q)$, and so
\begin{align}
\label{geosub1}
\int_{-1}^1F(\gamma_n(t))|\gamma_n'(t)|\,dt & \ge 2\sqrt{\alpha_i}d_i(p)\int_{-1}^1|\gamma_n'(t)|\,dt = 4\sqrt{\alpha_i}d_i(p)^2 + 2a_n\sqrt{\alpha_i}d_i(p),
\end{align}
where in the last equality we have used (\ref{Lgn}). In this case the claim readily follows by comparing (\ref{compG}) and (\ref{geosub1}).
\newline
\emph{Substep 2:} If $d_i(p) > d_i(\gamma_n(t_n))$, let $p' \in z_i(\mathcal{R})$ be given as above, let $p_n$ be the point on the line segment that joins $p$ to $p'$ with the property that $d_i(p_n) = d_i(\gamma_n(t_n))$, and define
\begin{equation}
\label{Qndef}
\mathcal{Q}_n \coloneqq B\left(p_n, \frac{d_i(p_n)}{2}\right) \cap \left\{z \in \R^M : \frac{d_i(p_n)}{2} \le d_i(z) \le d_i(p_n)\right\}.
\end{equation}
Let $w_n \colon [0, a_n + 2d_i(p_n)] \to \R^M$ be a parametrization by arclength of a simple closed arc of length $a_n + 2d_i(p_n)$ with the following properties: 
\begin{equation}
\label{di/2}
w_n(0) = w_n(a_n + 2d_i(p_n)) = p_n, \qquad w_n(t) \in \mathcal{Q}_n \text{ for all } t \in (0,a_n + 2d_i(p_n)).
\end{equation}
Let $q_n$ be the point on the line segment that joins $p$ and $p'$ with the property that $d_i(q_n) = d_i(q)$ and define $f_n \colon [0, d_i(p) - d_i(p_n)] \to \R^M$ and $g_n \colon [d_i(p) + d_i(p_n) + a_n, L(\gamma_n)] \to \R^M$ via
\begin{align*}
f_n(t) & \coloneqq \left(1 - \frac{t}{d_i(p) - d_i(p_n)}\right)p + \frac{tp_n}{d_i(p) - d_i(p_n)}, \\
g_n(t) & \coloneqq \left(1 + \frac{d_i(p) + d_i(p_n) + a_n - t}{d_i(q) - d_i(p_n)}\right)p_n + \frac{t - (d_i(p) + d_i(p_n) + a_n)}{d_i(q) - d_i(p_n)}q_n.
\end{align*}
Notice that if $d_i(q) = d_i(\gamma_n(t_n))$ then $q_n = p_n$ and the interval of definition of $g_n$ trivializes to a single point. Finally (see \Cref{compimg}), we set
\begin{equation}
\label{not-a-comp}
W_n(t) \coloneqq
\left\{
\begin{array}{ll}
f_n(t) & \text{ if } 0 \le t < d_i(p) - d_i(p_n),\\
& \\
w_n(t + d_i(p_n) - d_i(p)) & \text{ if } d_i(p) - d_i(p_n) \le t < d_i(p) + d_i(p_n) + a_n, \\
& \\
g_n(t) & \text{ if } d_i(p) + d_i(p_n) + a_n \le t \le L(\gamma_n).
\end{array}
\right.
\end{equation}

\begin{figure}[t]
\centering
\begin{tikzpicture}[blend group=normal, scale=1]
% \draw[help lines, step=0.1] (0,0) grid (12,8);
% draw axes
\draw [thick] plot [smooth, tension=0.5] coordinates {(0.0,0.5) (0.3,0.7) (1.1,5.5) (1.6,6.3)};
\node [left] at (1.3, 6) {$z_i(\mathcal{R})$};
\draw [fill] (2.5, 1.7) circle [radius=0.03];
\node [right] at (2.5, 1.7) {$p$};
\draw [fill] (2.5, 4.4) circle [radius=0.03];
\node [right] at (2.5,4.4) {$q$};
\draw plot [smooth, tension=0.9] coordinates {(2.5, 1.7) (2.1, 1.2) (2.3,1.9) (1.7, 2.1) (2.6, 2.9) (2.5, 3.4) (2.1, 3.8) (3.1, 4) (2.2,4.2) (2.5, 4.4)};
\draw [fill] (1.69, 2.105) circle [radius=0.03];
\node [below] at (1.49, 2.005) {$\gamma_n(t_n)$};

%%%%%%%%%%%%%%%%%%%%%%%%%%%%%%%%%%%%%%%%%
\draw [thick] plot [smooth, tension=0.5] coordinates {(4.0,0.5) (4.3,0.7) (5.1,5.5) (5.6,6.3)};
\node [left] at (5.3, 6) {$z_i(\mathcal{R})$};
\draw [fill] (6.5, 1.7) circle [radius=0.03];
\node [right] at (6.5, 1.7) {$p$};
\draw [fill] (6.5, 4.4) circle [radius=0.03];
\node [right] at (6.5,4.4) {$q$};
\draw [very thin, dashed] plot [smooth, tension=0.9] coordinates {(6.5, 1.7) (6.1, 1.2) (6.3,1.9) (5.7, 2.1) (6.6, 2.9) (6.5, 3.4) (6.1, 3.8) (7.1, 4) (6.2,4.2) (6.5, 4.4)};
\draw[thin, dotted, variable=\t,domain=0.14:0.76,samples=5] plot ({4.3 + 1.062+\t)},{6.428*\t});
\draw[thin, dotted, variable=\t,domain=-1.0:0.95,samples=5] plot ({4.5 + 1.062+\t)},{1.85 -0.1555*\t});
\draw[thin, variable=\t,domain=0.1:0.95,samples=5] plot ({4.5 + 1.062+\t)},{1.85 -0.1555*\t});
% \draw [domain=81:261, samples=100] plot ({5.648 + 0.567*cos(\x)}, {1.837 + 0.567*sin(\x)});
% \draw [red, domain=81:261, samples=9] plot ({5.648 + 0.54*cos(\x)}, {1.837 + 0.54*sin(\x)});
% \draw [red, domain=81:261, samples=5] plot ({5.648 + 0.14*cos(\x)}, {1.837 + 0.14*sin(\x)});
\draw [thin, red] plot [smooth, tension=0.2] coordinates {(5.648, 1.837) (5.565, 1.31) (5.59, 1.7) (5.37, 1.38) (5.548, 1.737) (5.22, 1.52) (5.52, 1.77) (5.13, 1.71) (5.5, 1.81) (5.12, 1.92) (5.5, 1.864) (5.18, 2.12) (5.525, 1.91) (5.32, 2.27) (5.56, 1.955) (5.52, 2.36) (5.62, 1.97) (5.732, 2.374) (5.648, 1.837)};
\draw [thin, blue] (5.648, 1.837) -- (6.088, 1.769); 
\draw[thin, dotted, variable=\t,domain=0.14:0.76,samples=5] plot ({4.75 + 1.062+\t)},{6.428*\t});
\draw [fill, blue] (6.088, 1.769) circle [radius=0.03];
\node [above, blue] at (6.383, 1.869) {$q_n$};
\draw [fill] (5.648, 1.837) circle [radius=0.03];
\node [below] at (5.82, 1.738) {$p_n$};

%%%%%%%%%%%%%%%%%%%%%%%%%%%%%%%%%%%%%%%%%
\draw [thick] plot [smooth, tension=0.5] coordinates {(8.0,0.5) (8.3,0.7) (9.1,5.5) (9.6,6.3)};
\node [left] at (9.3, 6) {$z_i(\mathcal{R})$};
\draw[thin, variable=\t,domain=-1.05:0.95,samples=5] plot ({8.5 + 1.062+\t)},{1.85 -0.1555*\t});
\draw[thin, variable=\t,domain=-0.624:0.95,samples=5] plot ({8.5 + 1.062+\t)},{4.55 -0.1555*\t});
\draw [fill] (10.5, 1.7) circle [radius=0.03];
\node [right] at (10.5, 1.7) {$p$};
\draw [fill] (10.5, 4.4) circle [radius=0.03];
\node [right] at (10.5,4.4) {$q$};
\draw [thin, dotted] plot [smooth, tension=0.9] coordinates {(10.5, 1.7) (10.1, 1.2) (10.3,1.9) (9.7, 2.1) (10.6, 2.9) (10.5, 3.4) (10.1, 3.8) (11.1, 4) (10.2,4.2) (10.5, 4.4)};

\node at (2,0) {$\gamma_n$};
\node at (6,0) {$W_n$};
\node at (10,0) {$\gamma$};

\end{tikzpicture}
\caption{From left to right, the figure depicts the curves parametrized by: an element of the sequence of almost minimizing geodesics, the function $W_n$ constructed to estimate the energy of $\gamma_n$, and the competitor $\gamma$.} \label{compimg}
\end{figure}
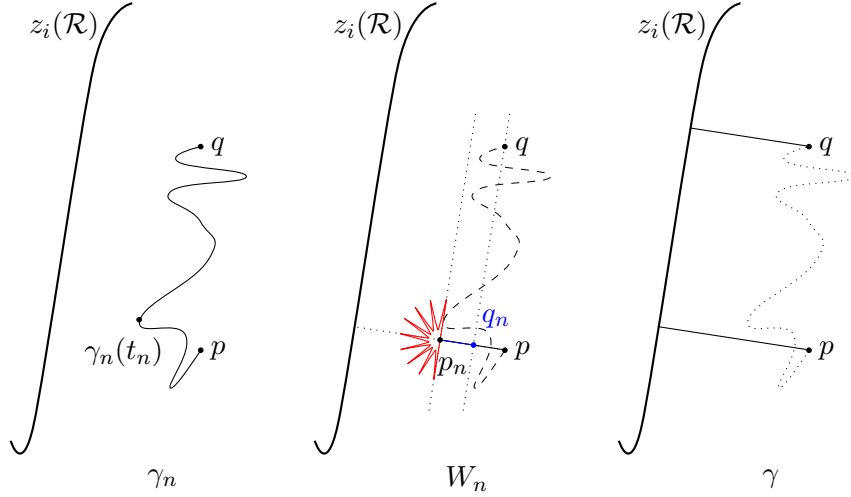

As one can readily check, when restricted to the interval $(0, d_i(p) + d_i(p_n) + a_n)$ the function $W_n$ gives a parametrization by arclength of a simple arc of length $d_i(p) + d_i(p_n) + a_n$. Moreover, if $d_i(p) + d_i(p_n) + a_n < L(\gamma_n)$, then $W_n$ restricted to the interval $(d_i(p) + d_i(p_n) + a_n, L(\gamma_n))$ gives a parametrization by arclength of a segment of length 
\[
L(\gamma_n) - d_i(p) - d_i(p_n) - a_n = d_i(q) - d_i(p_n).
\]
We claim that 
\begin{equation}
\label{diW<}
d_i(W_n(t)) \le d_i(v_n(t))
\end{equation}
for every $t \in [0,L(\gamma_n)]$, where $v_n$ is the reparametrization of $\gamma_n$ introduced in (\ref{geoarc}).
To prove claim, we argue by contradiction by assuming first that there exists $\overline{t} \in (0,d_i(p) - d_i(p_n))$ for which (\ref{diW<}) does not hold, so that by (\ref{not-a-comp})
\begin{equation}
\label{fntbar}
d_i(p) - d_i(v_n(\bar{t})) > d_i(p) - d_i(W_n(\bar{t})) = |p - f_n(\bar{t})| = \frac{\bar{t}|p - p_n|}{d_i(p) - d_i(p_n)} = \bar{t}.
\end{equation}
Recalling that $d_i$ is Lipschitz continuous with Lipschitz constant at most 1, and in view of (\ref{geoarc}), we see that
\begin{equation}
\label{1lip1lip}
d_i(p) - d_i(v_n(\bar{t})) \le |v_n(0) - v_n(\bar{t})| \le \bar{t}.
\end{equation}
Combining (\ref{fntbar}) and (\ref{1lip1lip}) we arrive at a contradiction. Notice that for $t \in (d_i(p) - d_i(p_n), d_i(p) + d_i(p_n) + a_n)$ inequality (\ref{diW<}) is satisfied in view of (\ref{Qndef}), (\ref{di/2}), and (\ref{not-a-comp}), while for all the remaining values of $t$ we can argue as above. Hence the claim is proved and therefore
\begin{equation}
\label{F<F}
F(W_n(t)) = 2\sqrt{\alpha_i}d_i(W_n(t)) \le 2\sqrt{\alpha_i}d_i(v_n(t)) = F(v_n(t))
\end{equation}
for every $t \in (0, L(\gamma_n))$.  Moreover, in view of (\ref{di/2}) and by means of a direct computation which uses the co-area formula, we see that
\begin{align*}
& \int_0^{d_i(p) - d_i(p_n)}F(W_n(t))\,dt = \sqrt{\alpha_i}\left(d_i(p)^2 - d_i(p_n)^2\right), \\
& \int_{d_i(p) - d_i(p_n)}^{d_i(p) + d_i(p_n) + a_n}F(W_n(t))\,dt \ge \sqrt{\alpha_i}d_i(p_n)(a_n + 2d_i(p_n)), \\
& \int_{d_i(p) + d_i(p_n) + a_n}^{L(\gamma_n)}F(W_n(t))\,dt = \sqrt{\alpha_i}\left(d_i(q)^2 - d_i(p_n)^2\right).
\end{align*}
In particular, combining the inequalities above with (\ref{compG}) and (\ref{F<F}) we obtain
\begin{align*}
\int_{-1}^1F(\gamma_n(t))|\gamma_n'(t)|\,dt = \int_0^{L(\gamma_n)}F(v_n(t))\,dt & \ge \int_0^{L(\gamma_n)}F(W_n(t))\,dt \\
& \ge \sqrt{\alpha_i}\left(d_i(p)^2 + d_i(q)^2\right) + \sqrt{\alpha_i}d_i(p_n)a_n \\
& \ge \int_{-1}^1F(\gamma(t))|\gamma'(t)|\,dt + \sqrt{\alpha_i}d_i(p_n)a_n \\
& \ge \int_{-1}^1F(\gamma(t))|\gamma'(t)|\,dt.
\end{align*}
Letting $n \to \infty$ in the previous inequality shows that $\gamma$ is a minimizing geodesic, thus proving our claim.
\newline
\textbf{Step 5:} Finally, we show the existence of a minimizing geodesic for any two distinct points $p,q \in \R^M$. Given $v \in \mathcal{A}(p,q)$ such that 
\[
\int_{-1}^1F(v(t))|v'(t)|\,dt \le d_F(p,q) + 1,
\]
let
\[
s_1 \coloneqq \min\left\{\inf\left\{ t \in (-1,1) : v(t) \in \bigcup_{i = 1}^k \mathcal{N}_{\sigma/2}(z_i(\mathcal{R}))\right\}, 1\right\}.
\]
If $s_1 < 1$, let $i_1$ be such that $v(s_1) \in \overline{\mathcal{N}_{\sigma/2}(z_{i_1}(\mathcal{R}))}$, and define
\[
t_1 \coloneqq \max\left\{\sup \left\{t \in (-1,1) : v(t) \in \mathcal{N}_{\sigma/2}(z_{i_1}(\mathcal{R}))\right\},s_1\right\}.
\]
Notice that if $s_1 < 1$, then $s_1$ and $t_1$ denote the first and last instance for which the support of the curve parametrized by $v$ can be found in the $\sigma/2$-neighborhood of $z_{i_1}(\mathcal{R})$, respectively. Similarly, for $j > 1$, if $t_{j - 1} < 1$, we define $s_j$, $i_j$, and $t_j$ inductively as follows:
\[
s_j \coloneqq \min\left\{\inf \left\{t \in (t_{j - 1}, 1) : v(t) \in \bigcup_{i \neq i_1, \dots, i_{j-1}}\mathcal{N}_{\sigma/2}(z_i(\mathcal{R}))\right\}, 1\right\},
\]
if $s_j < 1$ then the index $i_j$ is such that $v(s_j) \in \overline{\mathcal{N}_{\sigma/2}(z_{i_j}(\mathcal{R}))}$, and
\[
t_j \coloneqq \max\left\{\sup \left\{ t \in (t_{j - 1}, 1) : v(t) \in \mathcal{N}_{\sigma/2}(z_{i_j}(\mathcal{R}))\right\}, s_j\right\}.
\]
For every $j \in \{1, \dots, k\}$, let $v_j \colon [s_j, t_j] \to \R^M$ be the reparametrization of the minimizing geodesic which connects $v(s_j)$ to $v(t_j)$ found as in the previous steps, and let $V \colon [-1,1] \to \R^M$ be defined via 
\[
V(t) \coloneqq 
\left\{
\begin{array}{ll}
v_j(t) & \text{ if } t \in [s_j, t_j],\\
& \\
V(t) & \text{ otherwise}.
\end{array}
\right.
\] 
Then $V \in \mathcal{A}(p,q)$,
\[
\int_{-1}^1F(V(t))|V'(t)|\,dt \le \int_{-1}^1F(v(t))|v'(t)|\,dt,
\]
and furthermore, we see from (\ref{EL4}) that
\begin{align*}
L(V) & \le \sum_{j = 1}^k \int_{s_j}^{t_j}|v_j'(t)|\,dt + \int_{(-1,1) \setminus \cup_j(s_j,t_j)}|v'(t)|\,dt \\
& \le \sum_{j \in \mathcal{J}}\left(d_j(v(s_j)) + d_j(v(t_j) + \Lip(z_i)\diam(\mathcal{R})\right) + \frac{1}{\Sigma_\sigma(\mathcal{R})}\int_{-1}^1F(v(t))|v'(t)|\,dt \\
& \le  k\sigma + \diam(\mathcal{R})\sum_{i = 1}^k\Lip(z_i) + \frac{d_F(p,q) + 1}{\Sigma_\sigma(\mathcal{R})},
\end{align*}
where $\mathcal{J}$ denotes the set of indices for which $s_j \neq t_j$ and $\Sigma_\sigma(\mathcal{R})$ is defined as in (\ref{Sigma}).

Let $\{\gamma_n\}_{n \in \N} \subset \mathcal{A}(p,q)$ be a sequence of almost minimizing geodesics for $d_F(p,q)$. Then, for every $n$ sufficiently large we can find a function $V_n \in \mathcal{A}(p,q)$ such that
\[
L(V_n) \le k\sigma + \diam(\mathcal{R})\sum_{i = 1}^k\Lip(z_i) + \frac{d_F(p,q) + 1}{\Sigma_\sigma(\mathcal{R})}.
\] 
Thus, we are in a position to apply \Cref{L<k}. This concludes the the proof.
\end{proof}

\begin{remark}
\label{RemR}
In view of (\ref{H2}), condition (\ref{z_iRfar}) is satisfied if, for example, $\diam(\mathcal{R})$ is sufficiently small. Moreover, the convexity assumption on $\mathcal{R}$ can be easily relaxed by requiring that for any two points $x_1,x_2 \in \mathcal{R}$ there exists a path in $\mathcal{R}$ with finite Euclidean length from one to the other. One must then change the constant on the right-hand side of (\ref{LS}) accordingly.
\end{remark}

\begin{remark}
\label{UB}
Notice that the right-hand side of (\ref{LS}) depends continuously on $p$ and $q$. In particular, given $\lambda > 0$, set 
\begin{equation}
\label{S'}
\mathcal{W}(\lambda) \coloneqq \max\left\{2\sqrt{W(x,z)} : (x, z) \in \overline{\o} \times \overline{B(0,\lambda)}\right\}
\end{equation}
and observe that for every $p,q \in B(0,\lambda)$, if $\ell_{p,q}$ is defined as in (\ref{lpq}), we have
\[
d_F(p,q) \le \int_{-1}^1F(\ell_{p,q})|\ell_{p,q}'(t)|\,dt \le 2\lambda \mathcal{W}(\lambda).
\]
Consequently, if we let
\[
\Lambda(\lambda, \mathcal{R}) \coloneqq k\sigma + \diam(\o)\sum_{i = 1}^k\Lip(z_i) + \frac{2\lambda \mathcal{W}(\lambda) + 1}{\Sigma_\sigma(\mathcal{R})}, 
\]
then for every $p,q \in B(0,\lambda)$, \Cref{dFprop} yields the existence of a minimizing geodesic $\gamma \in \mathcal{A}(p,q)$ for $d_F(p,q)$ such that 
\begin{align*}
L(\gamma) & \le \Lambda(\lambda, \mathcal{R}), \\
\|\gamma\|_{L^{\infty}((-1,1);\R^M)} & \le \Lambda(\lambda, \mathcal{R}) + \lambda.
\end{align*}
Finally, observe that if $\mathcal{S} \subset \mathcal{R}$ then $\Sigma_{\sigma}(\mathcal{R}) \le \Sigma_{\sigma}(\mathcal{S})$ (see (\ref{Sigma})) and that in view of assumptions (\ref{H1})--(\ref{H3}) and (\ref{Hold4}),  
\[
\inf\left\{\Sigma_\sigma(\{x\}) : x \in \o \right\} > 0.
\]
Therefore, the following hold:
\begin{itemize}
\item[$(i)$] $\Lambda(\lambda, \mathcal{R}) \ge \Lambda(\lambda, \mathcal{S})$, 
\item[$(ii)$] $\sup\left\{\Lambda(\lambda, \{x\}) : x \in \o \right\} \eqqcolon \Lambda_{W}(\lambda) < \infty$.
\end{itemize}
\end{remark}

\begin{corollary}\label{dWisLipsc}
Under the assumptions of \Cref{dFprop}, the function $d_W$ introduced in \Cref{defdW} is Lipschitz continuous in $x$ and locally Lipschitz continuous with respect to the variables $p$ and $q$. In particular, $d_W$ is locally Lipschitz continuous, \emph{i.e.}, Lipschitz continuous on every compact subset of $\overline{\o} \times \R^M \times \R^M$. 
\end{corollary}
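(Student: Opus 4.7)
The plan is to exploit the uniform bound on the Euclidean length of minimizing geodesics supplied by \Cref{UB}, which reduces the Lipschitz question to a pointwise Lipschitz estimate for $\sqrt{W}$ in its $x$-variable. For the Lipschitz continuity in $x$ with $p,q \in \overline{B(0,\lambda)}$ held fixed, I would proceed as follows. By \Cref{UB}, for each $x \in \overline{\o}$ there exists a minimizing geodesic $\gamma_x \in \mathcal{A}(p,q)$ for $d_W(x,p,q)$ with $L(\gamma_x) \le \Lambda_W(\lambda)$ and image contained in $\overline{B(0,M)}$, where $M \coloneqq \Lambda_W(\lambda) + \lambda$. Using $\gamma_x$ as a competitor for $d_W(y,p,q)$ yields
\[
d_W(y,p,q) - d_W(x,p,q) \le \int_{-1}^1 2\bigl|\sqrt{W(y,\gamma_x(t))} - \sqrt{W(x,\gamma_x(t))}\bigr|\,|\gamma_x'(t)|\,dt,
\]
so the claim (together with its symmetric counterpart obtained by interchanging the roles of $x$ and $y$) will follow once one establishes a uniform pointwise bound $|\sqrt{W(x,z)} - \sqrt{W(y,z)}| \le C(M)|x-y|$ valid for every $z \in \overline{B(0,M)}$ and every $x, y \in \overline{\o}$ with $|x-y|$ sufficiently small; the extension to arbitrary $x, y \in \overline{\o}$ is then immediate from the upper bound $d_W(\cdot,p,q) \le 2\lambda \mathcal{W}(\lambda)$.

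The main obstacle is establishing this pointwise Lipschitz estimate on $\sqrt{W}$, since $\sqrt{W(\cdot,z)}$ is only H\"older continuous at the zeros of $W$ in general. I would handle this by a case split based on a buffer radius, after possibly reducing the parameter $r$ in \eqref{H3} so that $3r/2 < \delta$. If $z \in B(z_i(x), 3r/4)$ for some $i$, then by \eqref{H2} such an $i$ is unique, and for $|x-y|\max_j \Lip(z_j) \le r/4$ one also has $z \in B(z_i(y), r)$; hence \eqref{H3} gives
\[
\bigl|\sqrt{W(x,z)} - \sqrt{W(y,z)}\bigr| = \sqrt{\alpha_i}\,\bigl||z-z_i(x)|-|z-z_i(y)|\bigr| \le \sqrt{\alpha_i}\Lip(z_i)|x-y|.
\]
Otherwise $z \notin B(z_j(x), 3r/4)$ for every $j$; then \eqref{H3} (which yields $W(x,z) \ge \alpha_j(3r/4)^2$ whenever $z$ lies in the annulus $B(z_j(x), r) \setminus B(z_j(x), 3r/4)$) together with \eqref{Hold4} (which gives $W(x,z) \ge \eta$ when $z \notin \bigcup_j B(z_j(x), r/2)$) produce a uniform lower bound $W(x,z) \ge c_0 > 0$, and the same buffer argument yields an analogous lower bound $W(y,z) \ge c_0' > 0$. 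The identity $\sqrt{a}-\sqrt{b} = (a-b)/(\sqrt{a}+\sqrt{b})$, combined with the local Lipschitz continuity of $W$ on $\overline{\o}\times \overline{B(0,M)}$ from \eqref{H1}, then gives the desired pointwise bound in this regime.

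For the local Lipschitz continuity in $p$ and $q$, I would employ a simple concatenation argument. Given points $p_0, q_0, p, q \in \overline{B(0,\lambda)}$ and a minimizing geodesic $\gamma_0 \in \mathcal{A}(p_0, q_0)$ for $d_W(x, p_0, q_0)$, attaching to the endpoints of $\gamma_0$ suitable parametrizations of the line segments joining $p$ to $p_0$ and $q_0$ to $q$ produces a competitor in $\mathcal{A}(p,q)$. Since $2\sqrt{W(x,\cdot)} \le \mathcal{W}(\lambda)$ on $\overline{B(0,\lambda)}$, this yields
\[
d_W(x,p,q) - d_W(x,p_0,q_0) \le \mathcal{W}(\lambda)\bigl(|p-p_0|+|q-q_0|\bigr),
\]
and the reverse inequality follows by symmetry. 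Combining the two Lipschitz estimates via the triangle inequality
\[
|d_W(x,p,q) - d_W(y,p',q')| \le |d_W(x,p,q)-d_W(y,p,q)| + |d_W(y,p,q)-d_W(y,p',q')|
\]
then delivers the claimed local Lipschitz continuity of $d_W$ on compact subsets of $\overline{\o}\times\R^M\times\R^M$.
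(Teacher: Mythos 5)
Your proof is correct and follows essentially the same strategy as the paper: reduce the Lipschitz estimate in $x$ to a pointwise Lipschitz estimate on $\sqrt{W}$ along the uniformly bounded geodesic furnished by \Cref{UB}, and handle the $p,q$-dependence via a line-segment competitor. Your case-split on whether $z$ is near a well (using \eqref{H3}) or away from all wells (using \eqref{Hold4}) supplies in full the details that the paper compresses into a single assertion that $\sqrt{W}$ is locally Lipschitz.
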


\begin{proof}
Fix $p, q \in \R^M$ and let $x_1,x_2$ be any two points in $\o$. Let $\lambda$ be such that $p,q \in B(0,\lambda)$ and notice that we can assume without loss of generality that $d_W(x_1, p, q) \ge d_W(x_2, p, q)$ since in the other case the result follows from similar computations. It follows from an application of \Cref{dFprop} with $\mathcal{R} = \{x_2\}$, together with \Cref{UB}, that there exists a minimizing geodesic for $d_W(x_2,p,q)$, namely $\gamma$, such that $L(\gamma) \le \Lambda_{W}(\lambda)$.

Since by assumption $W$ is locally Lipschitz continuous, behaves quadratically near the wells, and is bounded away from zero away from the wells (see (\ref{H1})--(\ref{H3}), (\ref{Hold4})), we have that $\sqrt{W}$ is also locally Lipschitz continuous. Thus there exists a constant $\Lip(\sqrt{W}; \lambda)$, which also depends on $\Lambda_W(\lambda)$, such that
\begin{equation}\label{eq:Lipsqrt}
\left|\sqrt{W(x_1,\gamma(t))} - \sqrt{W(x_2,\gamma(t))}\right| \le \Lip(\sqrt{W}; \lambda)|x_1-x_2|,
\end{equation}
for all $t \in (-1,1)$. Consequently, using \eqref{eq:Lipsqrt}, we can estimate
\begin{align*}
d_W(x_1, p, q) - d_W(x_2, p, q) & \le  2\int_{-1}^1 \,\left|\sqrt{W\left(x_1,\gamma(t)\right)} - \sqrt{W\left(x_2,\gamma(t)\right)}\right| |\gamma'(t)|\,dt \\
& \le 2\, \Lip(\sqrt{W}; \lambda) |x_1-x_2|\int_{-1}^1 |\gamma'(t)| \,dt \\
& \le 2\, \Lip(\sqrt{W}; \lambda) \Lambda_W(\lambda) |x_1-x_2|.
\end{align*}
On the other hand, for fixed $x \in \o$ and $p \in \R^M$ and for every $q,q' \in B(0,\lambda)$ with $q \neq q'$, if we let $\ell_{q,q'} \in \mathcal{A}(q,q')$ be defined as in (\ref{lpq}), then we have
\begin{align*}
\frac{|d_W(x,p,q) - d_W(x,p,q')|}{|q - q'|} \le \frac{d_W(x,q,q')}{|q - q'|} & \le \frac{1}{|q - q'|}\int_{-1}^12\sqrt{W(x,\ell_{q,q'}(t))}|\ell_{q,q'}'(t)|\,dt \\
& = \int_{-1}^1\sqrt{W(x,\ell_{q,q'}(t))}\,dt \le \mathcal{W}(\lambda),
\end{align*}
where $\mathcal{W}(\lambda)$ is given as in (\ref{S'}). The rest of the proof follows from similar considerations; we omit the details.
\end{proof}

%%%%%%%%%%%%%%%%%%%%%%%%%%%%%%%%%%%%%%%%%%%%%%%%
%%%%%%%%%%%%%%%%%%%%%%%%%%%%%%%%%%%%%%%%%%%%%%%%
%%%%%%%%%%%%%%%%%%%%%%%%%%%%%%%%%%%%%%%%%%%%%%%%
%%%%%%%%%%%%%%%%%%%%%%%%%%%%%%%%%%%%%%%%%%%%%%%%

\section{Proof of \Cref{thm:main}}

\subsection{Compactness}
\label{sec:cmpt}
In this section we show that any sequence with bounded energy is precompact in $L^1(\o;\R^M)$.
\begin{proposition}
\label{prop:cpt}
Let $W$ be given as in \emph{(\ref{H1})}--\emph{(\ref{H4})} and let $\{u_n\}_{n \in \N} \subset H^1(\o;\R^M)$ be such that
\begin{equation}
\label{eq:boundenergy}
\sup \left\{ \f_n(u_n) : n \in \N \right\} \eqqcolon C < \infty.
\end{equation}
Then, eventually extracting a subsequence (not relabeled), we have that $u_n \to u$ in $L^1(\o;\R^M)$, where $u \in BV(\o;z_1,\dots,z_k)$ is such that $\f_0(u) <\infty$.
\end{proposition}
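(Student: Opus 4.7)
The plan is to adapt the Modica--Fonseca--Tartar strategy by composing $u_n$ with the $x$-dependent auxiliary functions $v_n^i(x) := d_W(x,z_i(x),u_n(x))$, $i = 1, \ldots, k$, and exploiting the Lipschitz continuity of $d_W$ provided by \Cref{dWisLipsc}. A preliminary step is to reduce to a sequence uniformly bounded in $L^\infty$: choosing $R$ large enough that $\bigcup_i z_i(\overline\o) \subset B(0,R/2)$ and (\ref{H4}) applies on $\{|p|>R\}$, the energy bound yields $\int_{\{|u_n|>R\}} |u_n|\,dx \le C\e_n/S \to 0$. Composing $u_n$ with a $1$-Lipschitz radial projection $\pi$ onto $\overline{B(0,R)}$ then produces a new sequence with the same $L^1$-limit and uniformly bounded energy (since $W(x,\pi(p)) \le \sup_{\overline\o \times \overline{B(0,R)}} W$ and $|\nabla(\pi\circ u_n)| \le |\nabla u_n|$), so it suffices to treat the bounded case.

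Assume henceforth $\|u_n\|_\infty \le R$. By \Cref{dWisLipsc}, $d_W$ is Lipschitz on $\overline\o \times \overline{B(0,R)} \times \overline{B(0,R)}$ with some constant $L$. Moreover, comparing the cost of any near-minimizing geodesic from $p$ to $q+hv$ with that of a geodesic from $p$ to $q$ followed by the straight segment $[q,q+hv]$ gives, as $h \downarrow 0$,
\[
|d_W(x,p,q+hv) - d_W(x,p,q)| \le 2h|v|\sqrt{W(x,q)} + o(h),
\]
so that the directional derivative of $d_W$ in $q$ is bounded by $2\sqrt{W(x,q)}$. Applying the Lipschitz chain rule to $v_n^i$ then yields, for $\mathcal{L}^N$-a.e.\ $x \in \o$,
\[
|\nabla v_n^i(x)| \le L\bigl(1 + \Lip(z_i)\bigr) + 2\sqrt{W(x,u_n(x))}\,|\nabla u_n(x)|.
\]
Combining Young's inequality with the energy bound gives $\int_\o 2\sqrt{W(x,u_n)}\,|\nabla u_n|\,dx \le \f_n(u_n) \le C$, so $\{v_n^i\}$ is bounded in $W^{1,1}(\o)$ and, being also $L^\infty$-bounded, precompact in $L^1(\o)$.

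Extracting a common subsequence, $v_n^i \to v^i$ in $L^1$ and pointwise a.e.\ for each $i$. A further subsequence (not relabeled) satisfies $W(x,u_n(x)) \to 0$ for a.e.\ $x$, since $\int_\o W(x,u_n)\,dx \le C\e_n \to 0$. By (\ref{H2}), for a.e.\ $x$ there is a unique index $j(x) \in \{1,\ldots,k\}$ with $u_n(x) \to z_{j(x)}(x)$; setting $u(x) := z_{j(x)}(x)$, the $L^\infty$-bound and dominated convergence give $u_n \to u$ in $L^1$. To show $u \in BV(\o; z_1, \ldots, z_k)$ and $\f_0(u) < \infty$, define $c := \min_{x\in\overline\o,\, i\neq j} d_W(x, z_i(x), z_j(x)) > 0$ (which exists by the remark preceding \Cref{noH2H3}). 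Then $v^i(x) \in \{0\}\cup [c,\infty)$ a.e., so $A_i := \{u = z_i\} = \{v^i < c/2\}$; by lower semicontinuity of the total variation $v^i \in BV(\o)$, and by the coarea formula $A_i$ has finite perimeter. Thus $\{A_1,\ldots,A_k\}$ is a Caccioppoli partition of $\o$, $u = \sum_i z_i \ca_{A_i} \in BV(\o; z_1,\ldots,z_k)$, and $\f_0(u) \le D\sum_i P(A_i;\o) < \infty$ with $D := \sup_{x,i,j} d_W(x,z_i(x),z_j(x)) < \infty$.

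The main technical difficulty lies in the chain-rule step: since $d_W$ is only Lipschitz, the pointwise bound $|\nabla_q d_W(x,p,q)| \le 2\sqrt{W(x,q)}$ and its use inside the composition with the $H^1$-map $u_n$ must be justified via the directional derivative estimate above, Rademacher's theorem, and a careful application of the Lipschitz chain rule (or, equivalently, by mollifying $W$, proving the bound for the smooth approximations, and passing to the limit). A secondary subtlety is that the reduction to $L^\infty$-bounded sequences cannot rely on truncation in the vectorial setting and requires a genuine Lipschitz projection, as the paper's sketch explicitly notes.
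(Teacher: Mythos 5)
Your proposal is correct and follows the same overall strategy as the paper's proof: reduce to a uniformly bounded sequence, show that the compositions $x \mapsto d_W(x, z_i(x), u_n(x))$ are bounded in $W^{1,1}(\o)$ via the pointwise gradient bound $|\nabla v_n^i| \le C + 2\sqrt{W(x,u_n)}|\nabla u_n|$, extract an $L^1$-limit, and conclude. There are, however, a few differences worth noting, one of which is a genuine simplification and one of which is a potential gap.

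Where you differ (and gain): the paper does \emph{not} truncate with $W$ directly; it first introduces a modified potential $W_1 \le W$ of the form $\varphi(|z|)W + (1-\varphi(|z|))S|z|$, so that $W_1$ is monotone under radial truncation ($W_1(x,v_n) \le W_1(x,u_n)$ pointwise), and then works with $d_{W_1}$ throughout, using the choice $R' = R + \Lambda_W(R)$ to guarantee $d_{W_1}(x,z_i(x),z_j(x)) = d_W(x,z_i(x),z_j(x))$. Your direct truncation with $W$ also works, but your parenthetical justification for the energy bound of the truncated sequence is incomplete: the pointwise bound $W(x,\pi(p)) \le \sup_{\overline\o\times\overline{B(0,R)}}W$ alone does not control $\frac{1}{\e_n}\int W(x,\pi(u_n))$; you also need $\mathcal{L}^N(\{|u_n|>R\}) \le \frac{1}{R}\int_{\{|u_n|>R\}}|u_n| \le C\e_n/(SR)$, which indeed follows from the estimate you write just above, and only then does the bound $\frac{1}{\e_n}\int_{\{|u_n|>R\}}W(x,\pi(u_n))\,dx \le C$ follow. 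You should spell this out. Your finishing argument — using lower semicontinuity to get $v^i \in BV(\o)$, the uniform gap $c>0$ between wells, and the coarea formula applied over $(0,c)$ to conclude $P(A_i;\o) \le |Dv^i|(\o)/c < \infty$ — is cleaner than the paper's Step 5, which characterizes $J_u$ explicitly as $\bigcup_{i<j}U_{ij}$ and bounds $\hno(U_{ij})$ directly.

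Where there is a gap: your proposed alternative of proving the chain-rule estimate ``by mollifying $W$, proving the bound for the smooth approximations, and passing to the limit'' does not work as stated. Even if $W_\delta := W * \rho_\delta$ is smooth, the induced geodesic distance $d_{W_\delta}$ is defined by an infimum and is not in general $C^1$; moreover $\sqrt{W_\delta}$ retains a cusp near its zero set, and mollifying $W$ perturbs the wells. The correct remedy — and the one the paper uses — is to mollify $u_n$ (more precisely, the truncated sequence): approximate it by $C^1(\overline\o;\R^M)$ functions via Meyer--Serrin, observe that the composed function is then Lipschitz in $x$, prove the gradient bound at points of differentiability via a difference-quotient argument (using precisely the directional estimate $|d_W(x,p,q+hv)-d_W(x,p,q)| \le 2h|v|\sqrt{W(x,q)}+o(h)$ you state), and pass to the limit. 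Your first alternative, ``Rademacher plus a careful application of the Lipschitz chain rule,'' is in essence the same thing once one realizes that the careful application consists of smoothing the Sobolev map, but the phrase should not be read as invoking a pointwise chain rule directly on $H^1$ functions, since $d_W$ may fail to be differentiable at $u_n(x)$ on a set of positive measure.
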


\begin{proof} We divide the proof into several steps.
\newline
\textbf{Step 1:} In this first step we prove that the sequence $\{u_n\}_{n \in \N}$ is bounded in $L^1(\o;\R^M)$ and equi-integrable. The proof is standard, but we report it here for the reader's convenience. The proof we present is adapted from \cite[Proposition 4.1]{Bal} (see also \cite[Theorem 1.6 and Theorem 2.4]{LeoniCNA}). To this end, notice that in view of (\ref{H4}) and (\ref{eq:boundenergy}) we have that
\begin{equation}
\label{eq:equiint}
S \int_{\{ |u_n| > R \}} |u_n(x)|\,dx \le \int_{\o} W(x, u_n(x))\,dx \le C \e_n.
\end{equation}
Consequently, if we let $E \subset \o$ be a measurable set, then
\begin{equation}
\label{equi1}
\int_E |u_n(x)| \,dx = \int_{E \cap \{ |u_n| \le R \}} |u_n(x)| \,dx + \int_{E \cap \{|u_n| > R\}} |u_n(x)| \,dx \le R \mathcal{L}^N(E) + \frac{C}{S}\e_n,
\end{equation}
where in the last step we used \eqref{eq:equiint}. In particular, by taking $E = \o$ we obtain that the sequence $\{u_n\}_{n \in \N}$ is bounded in $L^1(\o;\R^M)$. Moreveor, for every fixed $s > 0$, setting 
\[
\overline{n} \coloneqq \inf\left\{n \in \N : \frac{C}{S}\e_m \le \frac{s}{2}\text{ for all } m \ge n\right\} 
\]
and
\[
t \coloneqq \frac{1}{R}\left(s - \frac{C}{S}\e_{\overline{n}} \right),
\]
as a consequence of (\ref{equi1}) we obtain that for every $n \ge \overline{n}$ and every measurable set $E \subset \o$
\[
\int_E |u_n(x)| \,dx \le s
\]
provided that $\mathcal{L}^N(E) \le t$. This shows that the sequence $\{u_n\}_{n \in \N}$ is equi-integrable.
\newline
\textbf{Step 2:} For $R$ as in (\ref{H4}) and using the notation introduced in \Cref{UB}, set
\begin{equation}
\label{R'=}
R' \coloneqq R + \Lambda_{W}(R)
\end{equation}
and let $\varphi \colon [0,\infty) \to [0,1]$ be a smooth cut-off function such that $\varphi(\rho) = 1$ for $\rho \le R'$ and $\varphi(\rho) = 0$ for $\rho \ge 2R'$. Let
\[
W_1(x,z) \coloneqq \varphi(|z|)W(x,z) + (1 - \varphi(|z|))S|z|.
\]
Then $W_1 \colon \o \times \R^M \to [0,\infty)$ satisfies (\ref{H1})--(\ref{H4}) and moreover 
\begin{equation}
\label{W1}
W_1(x,z) \le W(x,z)
\end{equation} 
for every $x \in \o$ and $z \in \R^M$. For every $n \in \N$, define the function $v_n \colon \o \to \R^M$ via
\begin{equation}
\label{truncate}
v_n(x) \coloneqq 
\left\{
\begin{array}{ll}
\displaystyle u_n(x) & \text{ if } u_n(x) \in B(0,2R'),\\
& \\
\displaystyle 2R'\frac{u_n(x)}{|u_n(x)|} & \text{ otherwise}.
\end{array}
\right.
\end{equation}
Notice that $v_n \in H^1(\o;\R^M) \cap L^{\infty}(\o;\R^M)$ with $\|v_n\|_{L^{\infty}(\o;\R^M)} \le 2R'$, and that $|\nabla v_n(x)| \le |\nabla u_n(x)|$ for $\mathcal{L}^N$-a.e.\@ $x \in \o$. We claim that
\begin{equation}
\label{W1un}
W_1(x,v_n(x)) \le W_1(x,u_n(x))
\end{equation}
for $\mathcal{L}^N$-a.e.\@ $x \in \o$. Indeed, equality holds for $\mathcal{L}^N$-a.e.\@ $x$ such that $u_n(x) \in B(0,2R')$, while if this is not the case then 
\[
W_1(x,v_n(x)) = S|v_n(x)| \le S|u_n(x)| = W_1(x,u_n(x)).
\]
Thus, by (\ref{eq:boundenergy}), (\ref{W1}), and (\ref{W1un}) we see that 
\begin{align}
\label{W1C}
\int_\o \left[\frac{1}{\e_n}W_1\left(x,v_n(x)\right)+\e_n|\nabla v_n(x)|^2\right]\,dx & \le \int_\o \left[\frac{1}{\e_n}W_1\left(x,u_n(x)\right)+\e_n|\nabla u_n(x)|^2\right]\,dx \notag \\
& \le \int_\o \left[\frac{1}{\e_n}W\left(x,u_n(x)\right)+\e_n|\nabla u_n(x)|^2\right]\,dx \le C.
\end{align}
We conclude this step by remarking that $R'$ (see (\ref{R'=})) is chosen in such a way that 
\begin{equation}
\label{dW=dW1}
d_{W_1}(x,z_i(x), z_j(x)) = d_W(x,z_i(x), z_j(x))
\end{equation}
for every $x \in \o$ and every $i,j \in \{1, \dots, k\}$.
\newline
\textbf{Step 3:} For $i \in \{1, \dots, k\}$ and $n \in \N$, let $f_n^i \colon \o \to \R$ be the function defined via 
\[
f^{i}_n(x) \coloneqq d_{W_1}(x, z_i(x), v_n(x)).
\]
The purpose of this step is to show that, up to the extraction of a subsequence (which we do not relabel), $f^{i}_n \to f^{i}$ in $L^1(\o)$ as $n \to \infty$, for some $f^i \in BV(\o)$. To prove the claim, it is enough to show that 
\begin{equation}
\label{wtsBaldo}
\int_{\o} |\nabla f_n^i(x)|\,dx \le \Lip(d_{W_1}; 3R')\left(1 + \Lip(z_i)\right)\mathcal{L}^N(\o) + \int_{\o}2 \sqrt{W_1(x,v_n(x))}|\nabla v_n(x)|\,dx, 
\end{equation}
where by $\Lip(d_{W_1}; \lambda)$ we denote the Lipschitz constant of $d_{W_1}$ on $\o \times B(0,\lambda) \times B(0,\lambda)$ (see \Cref{dWisLipsc}). Indeed, (\ref{W1C}) implies that
\begin{equation}
\label{youngvn}
\int_{\o} 2\sqrt{W_1(x,v_n(x))}|\nabla v_n(x)|\,dx \le \int_\o \left[\frac{1}{\e_n}W_1\left(x,v_n(x)\right)+\e_n|\nabla v_n(x)|^2\right]\,dx \le C,
\end{equation}
and so it follows from (\ref{wtsBaldo}) and (\ref{youngvn}) that
\begin{equation}
\label{fniGrad}
\int_{\o}|\nabla f_n^i(x)|\,dx \le \Lip(d_{W_1}; 3R')\left(1 + \Lip(z_i)\right)\mathcal{L}^N(\o) + C.
\end{equation}
Since, as one can readily check, the sequence $\{f_n^i\}_{n \in \N}$ is bounded in $L^1(\o)$, (\ref{fniGrad}) yields that it is also bounded in $W^{1,1}(\o)$. In turn, the claim follows by the Rellich--Kondrachov compactness theorem (see, for example, \cite[Theorem 14.36]{L}). The rest of the step is devoted to the proof of (\ref{wtsBaldo}), which is adapted from \cite[Proposition 2.1]{Bal}.

By the Meyer--Serrin approximation theorem, for every $n \in \N$ there exists a sequence $\{v_{n,k}\}_{k \in \N}$ of functions in $H^1(\o;\R^M) \cap C^1(\overline{\o};\R^M)$ such that 
\begin{equation}
\label{vnkconv}
\arraycolsep=1.4pt\def\arraystretch{1.6}
\begin{array}{rll}
v_{n,k} \to & v_n & \text{ in } H^1(\o,\R^M), \\
v_{n,k} \to & v_n & \text{ pointwise a.e. in } \o, \\
\nabla v_{n,k} \to & \nabla v_n & \text{ pointwise a.e. in } \o, \\
|v_{n,k}(x)| \le & 3R' & \text{ in } \o.
\end{array}
\end{equation}
Moreover, eventually passing to a subsequence (which we do not relabel), for every $n \in \N$ we can find a function $h_n \in L^1(\o)$ such that for $\mathcal{L}^N$-a.e.\@ $x \in \o$
\begin{equation}
\label{Lebnk}
|v_{n,k}(x)|^2 + |\nabla v_{n,k}(x)|^2 \le h_n(x).
\end{equation}
Let $f_{n,k}^i \colon \o \to \R$ be defined as 
\[
f_{n,k}^i(x) \coloneqq d_{W_1}(x, z_i(x), v_{n,k}(x)).
\]
Then $f_{n,k}^i$ is Lipschitz continuous in $\o$ and therefore differentiable almost everywhere. Observe that for $x, y \in \o$
\begin{multline*}
|f_{n,k}^i(y) - f_{n,k}^i(x)| \le |d_{W_1}(y, z_i(y), v_{n,k}(y)) - d_{W_1}(x, z_i(y), v_{n,k}(y))| + |d_{W_1}(x, z_i(y), v_{n,k}(y)) \\
- d_{W_1}(x, z_i(x), v_{n,k}(y))| + |d_{W_1}(x, z_i(x), v_{n,k}(y)) - d_{W_1}(x, z_i(x), v_{n,k}(x))|,
\end{multline*}
so that
\begin{equation}
\label{fnyx}
|f_{n,k}^i(y) - f_{n,k}^i(x)| \le \Lip(d_{W_1}; 3R')\left(1 + \Lip(z_i)\right)|x - y| + d_{W_1}(x, v_{n,k}(x), v_{n,k}(y)).
\end{equation}
Fix $\tau > 0$, and set $\o_{\tau} \coloneqq \{x \in \o : \dist(x,\partial \o) > \tau\}$. Then, if $x \in \o_{\tau}$, $h \in \R \setminus \{0\}$ is such that $|h| \le \tau$, and $\nu \in \mathbb{S}^{N - 1}$, by setting $y = x + h \nu$ in (\ref{fnyx}), we obtain
\begin{equation}
\label{fnyx2}
\frac{|f_n^i(x + h \nu) - f_n^i(x)|}{|h|} \le C_1 + \frac{d_{W_1}(x, v_n(x), v_n(x + h \nu))}{|h|},
\end{equation}
where the constant $C_1$ is defined as
\[
C_1 \coloneqq \Lip(d_{W_1}; 3R')\left(1 + \Lip(z_i)\right).
\]
Let $\ell_{n,k}^h \in \mathcal{A}(v_{n,k}(x),v_{n,k}(x + h \nu))$ be a parametrization of the line segment that joins $v_{n,k}(x)$ and $v_{n,k}(x + h \nu)$ and notice that 
\begin{align}
\label{baldo2.1}
\frac{d_{W_1}(x,v_{n,k}(x),v_{n,k}(x + h \nu))}{|h|} & \le \frac{1}{|h|}\int_{-1}^1 2\sqrt{W_1(x,\ell_{n,k}^h(t))}|(\ell_{n,k}^h)'(t)| \,dt \notag \\
& = \frac{|v_{n,k}(x + h \nu) - v_{n,k}(x)|}{|h|} \int_{-1}^1 \sqrt{W_1(x,\ell_{n,k}^h(t))}\,dt \notag \\
& \le \frac{1}{|h|}\int_0^{|h|} |\nabla v_{n,k}(x + s \nu)|\,ds \int_{-1}^1 \sqrt{W_1(x,\ell_{n,k}^h(t))}\,dt.
\end{align}
If $f_{n,k}^i$ is differentiable at $x \in \o_{\tau}$, by (\ref{fnyx2}) and (\ref{baldo2.1}), and in view of the continuity of $W_1$, it follows that by letting $h \to 0$ we get
\begin{equation}
\label{gnu}
|\nabla f_{n,k}^i(x) \cdot \nu| \le C_1 + 2 \sqrt{W_1(x,v_{n,k}(x))}|\nabla v_{n,k}(x)|.
\end{equation}
Taking the supremum over all $\nu \in \mathbb{S}^{N - 1}$ in (\ref{gnu}), we obtain that for $\mathcal{L}^N$-a.e.\@ $x \in \o$
\begin{equation}
\label{gradfni}
|\nabla f_{n,k}^i(x)| \le C_1 + 2 \sqrt{W_1(x,v_{n,k}(x))} |\nabla v_{n,k}(x)|.
\end{equation}
In turn, by (\ref{vnkconv}), (\ref{Lebnk}), (\ref{gradfni}), and Lebesgue's dominated convergence theorem we see that 
\begin{align}
\label{BaldoC1}
\int_{\o_{\tau}}|\nabla f_{n,k}^i(x)|\,dx & \le C_1 \mathcal{L}^N(\o_{\tau}) + \int_{\o_{\tau}}2 \sqrt{W_1(x,v_{n,k}(x))} |\nabla v_{n,k}(x)|\,dx \notag \\
& \le C_1 \mathcal{L}^N(\o) + \int_{\o}2 \sqrt{W_1(x,v_{n,k}(x))} |\nabla v_{n,k}(x)|\,dx \notag \\
& \to C_1 \mathcal{L}^N(\o) + \int_{\o}2 \sqrt{W_1(x,v_n(x))} |\nabla v_n(x)|\,dx
\end{align}
as $k \to \infty$. Next, using the notation introduced in \Cref{UB} (see (\ref{S'})), we observe that by (\ref{Lebnk}) and (\ref{gradfni}) we have that 
\begin{align*}
\int_{\o_{\tau}}|\nabla f_{n,k}^i(x)|^2\,dx & \le 2C_1^2\mathcal{L}^N(\o) + 8\mathcal{W}(3R') \int_{\o}|\nabla v_{n,k}(x)|^2\,dx \\
& \le 2C_1^2\mathcal{L}^N(\o) + 8\mathcal{W}(3R') \int_{\o}h_n(x)\,dx.
\end{align*}
By the monotone convergence theorem, letting $\tau \to 0$ in the previous inequality, we conclude that $\{f_{n,k}^i\}_{k \in \N}$ is bounded in $H^1(\o;\R^M)$. Therefore, eventually extracting a subsequence (which we do not relabel), there exists $g_n^i \in H^1(\o;\R^M)$ such that $f_{n,k}^i \rightharpoonup g_n^i$ in $H^1(\o;\R^M)$ and $f_{n,k}^i \to g_n^i$ in $L^2(\o;\R^M)$ as $k \to 0$. Since $f_{n,k}^i \to f_n^i$ in $L^2(\o;\R^M)$, we obtain that $f_n^i$ must coincide with $g_n^i$ almost everywhere in $\o$, and therefore
\begin{equation}
\label{weaknk}
\int_{\o_{\tau}}|\nabla f_{n,k}^i(x)|\,dx \to \int_{\o_{\tau}}|\nabla f_n^i(x)|\,dx.
\end{equation}
Combining (\ref{BaldoC1}) and (\ref{weaknk}), we arrive at (\ref{wtsBaldo}) with an application of the monotone convergence theorem, letting $\tau \to 0$.
\newline
\textbf{Step 4:} Without loss of generality, we can assume that $f_n^i \to f^i$ pointwise $\mathcal{L}^N$-a.e.\@ in $\o$ for every $i$. Next, let 
\[
\o^i \coloneqq \{x \in \o : f^i(x) = 0\}
\] 
and set 
\begin{equation}
\label{udef}
u(x) \coloneqq \sum_{i = 1}^kz_i(x)\ca_{\o^i}(x).
\end{equation}
We claim that eventually extracting a subsequence, $v_n \to u$ in $L^1(\o;\R^M)$. Notice that by Vitali's convergence theorem, together with the results of Step 1, it is enough to show the existence of a subsequence which converges pointwise almost everywhere to $u$. To this end, we observe that (\ref{W1C}) implies that
\[
\mathcal{L}^N\left(\left\{x \in \o : \limsup_{n \to \infty}W_1(x,v_n(x)) >0 \right\}\right) = 0.
\]
Thus, for $\mathcal{L}^N$-a.e.\@ $x \in \o$ we have that cluster points of the sequence $\{v_n(x)\}_{n \in \N}$ belong to the set $\{z_1(x), \dots, z_k(x)\}$. For $i \neq j$, let $B_{ij}$ be the subset of $\o^i$ consisting of all points $x$ for which there exists a subsequence $\{v_{n_m}(x)\}_{m \in \N}$ such that $v_{n_m}(x) \to z_j(x)$. Arguing by contradiction, assume that $\mathcal{L}^N(B_{ij}) > 0$ for some $j$. Then we have
\begin{align*}
0 < \int_{B_{ij}}d_{W_1}(x, z_i(x), z_j(x))\,dx & = \lim_{m \to \infty} \int_{B_{ij}}d_{W_1}(x,z_i(x),v_{n_m}(x))\,dx \\
& = \lim_{m \to \infty}\int_{B_{ij}}f_{n_m}^i(x)\,dx \\
& = \int_{B_{ij}}f^i(x)\,dx = 0.
\end{align*}
Thus we have arrived at a contradiction. In particular, since $v_n \to u$ in $L^1(\o;\R^M)$, we deduce that 
\begin{equation}
\label{fi=}
f^i(x) = d_{W_1}(x,z_i(x),u(x)) = d_W(x,z_i(x),u(x))
\end{equation}
for $\mathcal{L}^N$-a.e.\@ $x \in \o$, where in the last equality we have used (\ref{dW=dW1}) and (\ref{udef}). Finally, notice that by (\ref{eq:equiint}) and (\ref{truncate}) we have that
\[
\int_{\o}|u_n(x) - v_n(x)|\,d x = \int_{\{|u_n| > 2R\}}|u_n(x)|\,d x \le C\e_n,
\]
and therefore, eventually extracting a subsequence, $u_n \to u$ in $L^1(\o;\R^M)$ as it was claimed.
\newline
\textbf{Step 5:} This step is concerned with the proof of additional regularity properties of the function $u$, defined in (\ref{udef}). Let us remark that in view of (\ref{fi=}), throughout the rest proof we can return to working with the potential $W$ instead of $W_1$. We begin by showing the following characterization of the jump set of $u$ (see \Cref{def:jump}):
\begin{equation}
\label{JUij}
J_u = \bigcup_{i = 1}^{k - 1}\bigcup_{j = i + 1}^k U_{ij}, 
\end{equation}
where the sets $U_{ij}$ are defined via
\begin{equation}
\label{Uij}
U_{ij} \coloneqq \left\{ x \in J_{f^i} : (f^i(x)^+, f^i(x)^-, \nu_{f^i}(x)) = (0, d_W(x,z_i(x), z_j(x)), \nu_{f^i}(x))\right\}.
\end{equation}
Notice that 
\[
J_u = \bigcup_{i = 1}^k\bigcup_{j = i + 1}^k V_{ij}, 
\]
where
\begin{equation}
\label{Vij}
V_{ij} \coloneqq \left\{ x \in J_u : (u^+(x), u^-(x), \nu_u(x)) = (z_i(x), z_j(x), \nu_u(x)) \right\},
\end{equation}
so that to prove (\ref{JUij}) it is enough to show that $U_{ij} = V_{ij}$. To this end, fix $x \in V_{ij}$ and observe that 
\begin{align}
\label{Vij+}
d_W(y, z_i(y), u(y)) & = d_W(y, z_i(y), u(y)) - d_W(x, z_i(x), u(y)) + d_W(x, z_i(x), u(y)) \notag \\
& \le \Lip(d_W; 3R')\left((1 + \Lip(z_i))|x - y| + |u(y) - z_i(x)|\right)
\end{align}
holds for $\mathcal{L}^N$-a.e.\@ $y \in \o$.
Consequently, from (\ref{fi=}), (\ref{Vij}), and (\ref{Vij+}) we see that 
\begin{align*}
0 & \le \frac{1}{\rho^N}\int_{B^+(x,\nu_u(x), \rho)} f^i(y)\,dy \\
& \le \Lip(d_W; 2R')\left(\frac{(1 + \Lip(z_i))\rho}{
2} + \frac{1}{\rho^N}\int_{B^+(x, \nu_u(x), \rho)} |u(y) - z_i(x)|\,dy\right) \to 0
\end{align*}
as $\rho \to 0^+$. Similarly, one can show that 
\[
\frac{1}{\rho^N}\int_{B^-(x, \nu_u(x), \rho)} |f^i(y) - d_W(x,z_i(x),z_j(x))|\,dy \to 0,
\]
and thus we conclude that $V_{ij} \subset U_{ij}$. To prove the reverse inequality, we begin by noticing that in view of (\ref{H2}) there exists a constant $\omega$ such that 
\begin{equation}
\label{littleomega}
\inf\left\{d_W(x, z_i(x), z_j(x)) : x \in \o \text{ and } i \neq j\right\} \ge \omega.
\end{equation}
In turn, if $x \in U_{ij}$ (see (\ref{Uij})) and $\e > 0$ is given, there exists $\overline{\rho} > 0$ such that if $\rho \le \overline{\rho}$ then 
\[
\rho_N\e \ge \int_{B^+(x, \nu_{f^i}(x), \rho)} f^i(y)\,dy \ge \omega \sum_{j \neq i}\mathcal{L}^N\left(B^+(x, \nu_{f^i}(x), \rho) \cap \o^j\right).
\]
Consequently, recalling that $\|z_i\|_{L^{\infty}(\o;\R^M)} \le R$ by (\ref{H4}), notice that for every such $\rho$ we have
\begin{align*}
\int_{B^+(x, \nu_{f^i}(x), \rho)}|u(y) - z_i(x)|\,dy & = \sum_{j = 1}^k\int_{B^+(x, \nu_{f^i}(x), \rho) \cap \o^j}|z_j(y) - z_i(x)|\,dy \\
& \le \Lip(z_i)\rho^{N + 1} + 2R \sum_{j \neq i} \mathcal{L}^N\left(B^+(x, \nu_{f^i}(x), \rho) \cap \o^j\right) \\
& \le \Lip(z_i)\rho^{N + 1} +  \frac{2R \rho^N \e}{\omega},
\end{align*}
and therefore  
\[
\limsup_{\rho \to 0^+} \frac{1}{\rho^N}\int_{B^+(x, \nu_{f^i}(x), \rho)}|u(y) - z_i(x)|\,dy \le \frac{2R \e}{\omega}.
\]
Finally, letting $\e \to 0^+$ yields the desired result. Notice that with similar computations one can show that 
\[
\lim_{\rho \to 0^+} \frac{1}{\rho^N}\int_{B^-(x, \nu_{f^i}(x), \rho)}|u(y) - z_j(x)|\,dy = 0.
\]
Hence $x \in V_{ij}$, and therefore we have shown that also $U_{ij} \subset V_{ij}$. The characterization of the jump set in (\ref{JUij}) readily follows.

Next, observe that for $U_{ij}$ defined as in (\ref{Uij}), if $\omega$ is the constant given in (\ref{littleomega}) then 
\begin{align}
\label{measUij}
\mathcal{H}^{N - 1}(U_{ij}) = \int_{U_{ij}}\,d\mathcal{H}^{N - 1}(x) & = \int_{U_{ij}}\frac{d_W(x, z_i(x), z_j(x))}{d_W(x, z_i(x), z_j(x))}\,d\mathcal{H}^{N - 1}(x) \notag \\
& \le \frac{1}{\omega} \int_{U_{ij}}d_W(x, z_i(x), z_j(x))\,d\mathcal{H}^{N - 1}(x) \notag \\
& = \frac{1}{\omega} \int_{U_{ij}}\left|f^i(x)^+ - f^i(x)^-\right|\,d\mathcal{H}^{N - 1}(x) < \infty,
\end{align}
where in the last equality we have used the fact that $f^i \in BV(\o)$. Combining (\ref{udef}), (\ref{JUij}), and (\ref{measUij}) we see that $u \in BV(\o; z_1, \dots, z_k)$. Finally, notice that
%\begin{align*}
%\f_0(u) = \int_{J_u}d_{W}(x, u^+(x), u^-(x)))\,d\mathcal{H}^{N - 1}(x) & = \sum_{i = 1}^{k - 1}\sum_{j = i + 1}^k\int_{U_{ij}} d_{W}(x,z_i(x), z_j(x))\,d\mathcal{H}^{N - 1}(x) \\
%& \le \|d_{W}\|_{L^{\infty}(\o \times B(0,R) \times B(0,R))}\sum_{i = 1}^{k - 1}\sum_{j = i + 1}^k \mathcal{H}^{N - 1}(U_{ij}).
%\end{align*}
\begin{align*}
\f_0(u) = \int_{J_u}d_{W}(x, u^+(x), u^-(x)))\,d\mathcal{H}^{N - 1}(x) & = \sum_{i = 1}^{k - 1}\sum_{j = i + 1}^k\int_{U_{ij}} d_{W}(x,z_i(x), z_j(x))\,d\mathcal{H}^{N - 1}(x) \\
& \le \sum_{i = 1}^{k - 1}\sum_{j = i + 1}^k \int_{U_{ij}}\left|f^i(x)^+ - f^i(x)^-\right|\,d\mathcal{H}^{N - 1}(x) \\
& < \infty.
\end{align*}
This concludes the proof.
\end{proof}

%%%%%%%%%%%%%%%%%%%%%%%%%%%%%%%%%%%%%%%%%%%%%%%%
%%%%%%%%%%%%%%%%%%%%%%%%%%%%%%%%%%%%%%%%%%%%%%%%
%%%%%%%%%%%%%%%%%%%%%%%%%%%%%%%%%%%%%%%%%%%%%%%%
%%%%%%%%%%%%%%%%%%%%%%%%%%%%%%%%%%%%%%%%%%%%%%%%

\subsection{Liminf inequality}\label{sec:liminf}
The goal of this section is to prove the following result.

\begin{proposition}\label{prop:liminf}
Let $W$ be given as in \emph{(\ref{H1})}--\emph{(\ref{H4})}. For $u \in L^1(\o;\R^M)$, let $\{u_n\}_{n \in \N}$ be a sequence of functions in $H^1(\o;\R^M)$ such that $u_n \to u$ in $L^1(\o;\R^M)$. Then
\[
\f_0(u) \le \liminf_{n \to \infty} \f_n(u_n).
\]
\end{proposition}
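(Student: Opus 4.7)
The plan is to follow the blow-up strategy of Fonseca and M\"uller. First, I would reduce to the case $\liminf_n \f_n(u_n) < \infty$ and, along a (non-relabeled) subsequence where the liminf is a limit, invoke \Cref{prop:cpt} to get $u \in BV(\o;z_1,\dots,z_k)$. Setting $\mu_n \coloneqq [\tfrac{1}{\e_n}W(x,u_n) + \e_n|\nabla u_n|^2]\mathcal{L}^N \restr \o$, \Cref{thm:wscmpt} provides a further subsequence along which $\mu_n \stackrel{*}{\rightharpoonup}\mu$ for some $\mu\in \mathcal{M}^+(\o)$. Applying the Besicovitch derivation theorem (\Cref{thm:Bes}) with $\mu$ and $\nu \coloneqq \hno\restr J_u$, the problem reduces to showing that for $\hno$-a.e.\ $x_0 \in J_u$
\begin{equation}\label{eq:ptwise}
\liminf_{\rho\to 0^+}\frac{\mu(Q_\rho(x_0,\nu_u(x_0)))}{\rho^{N-1}} \ge d_W(x_0, u^+(x_0), u^-(x_0)),
\end{equation}
where $Q_\rho(x_0,\nu)$ denotes the open cube of side $\rho$ centered at $x_0$ with two faces orthogonal to $\nu$.

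Fix such $x_0$, and write $\nu = \nu_u(x_0)$, $\alpha = u^+(x_0) = z_i(x_0)$, $\beta = u^-(x_0) = z_j(x_0)$. I would pick $\rho_m \to 0^+$ with $\mu(\partial Q_{\rho_m}) = 0$ (possible by \Cref{mubdry=0}) so that $\mu(Q_{\rho_m}) = \lim_n \mu_n(Q_{\rho_m})$ via \Cref{lem:convmeas}. The key computation combines Young's inequality, the projection $|\nabla u_n| \ge |\nabla u_n\cdot \nu|$, the replacement
\[
F_m(p) \coloneqq \min\bigl\{2\sqrt{W(x,p)} : x \in \overline{Q_{\rho_m}(x_0,\nu)}\bigr\},
\]
Fubini's theorem in the slicing $x = y + t\nu$ with $y \in Q'_{\rho_m}$ (the cross-section, of $\hno$-measure $\rho_m^{N-1}$), and the definition (\ref{dF}) of $d_{F_m}$. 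Together these yield, for any $\tau_m^\pm \in (\rho_m/4, \rho_m/2)$,
\begin{equation}\label{eq:slicebd}
\frac{\mu_n(Q_{\rho_m})}{\rho_m^{N-1}} \ge \frac{1}{\rho_m^{N-1}}\int_{Q'_{\rho_m}} d_{F_m}\bigl(u_n(y-\tau_m^-\nu),\, u_n(y+\tau_m^+\nu)\bigr)\,d\hno(y).
\end{equation}

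To pass $n \to \infty$ in \eqref{eq:slicebd} I would apply Fatou together with: (i) a diagonal extraction giving pointwise a.e.\ convergence $u_n \to u$, combined with Fubini to select $\tau_m^\pm$ for which $u_n(y \pm \tau_m^\pm \nu) \to u(y \pm \tau_m^\pm \nu)$ for $\hno$-a.e.\ $y \in Q'_{\rho_m}$; and (ii) the truncation argument of Step~2 in \Cref{prop:cpt}, which lets me assume $\|u_n\|_{L^\infty}$ uniformly bounded so that $d_{F_m}$ evaluated on these arguments is uniformly bounded by \Cref{UB}. The jump-point property (\Cref{def:jump}) then forces $u(y - \tau_m^- \nu) = \beta$ and $u(y + \tau_m^+ \nu) = \alpha$ on a subset of $Q'_{\rho_m}$ of $\hno$-measure $\rho_m^{N-1}(1-o_m(1))$, reducing the remaining task to establishing
\begin{equation}\label{eq:dFmConv}
\liminf_{m\to\infty}d_{F_m}(\alpha,\beta) \ge d_W(x_0,\alpha,\beta).
\end{equation}
For \eqref{eq:dFmConv}, I would apply \Cref{dFprop} with $\mathcal{R} = \overline{Q_{\rho_m}(x_0,\nu)}$ --- which is convex and satisfies (\ref{z_iRfar}) once $\rho_m$ is small enough by (\ref{H2}) --- to produce minimizing geodesics $\gamma_m \in \mathcal{A}(\alpha,\beta)$ for $d_{F_m}$ whose Euclidean length is bounded uniformly in $m$. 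Arzel\`a--Ascoli then yields a limit $\gamma \in \mathcal{A}(\alpha,\beta)$, and uniform convergence $F_m \to 2\sqrt{W(x_0,\cdot)}$ on compact sets (from continuity of $W$ in $x$) gives $\liminf_m d_{F_m}(\alpha,\beta) \ge \int_{-1}^1 2\sqrt{W(x_0,\gamma(t))}|\gamma'(t)|\,dt \ge d_W(x_0,\alpha,\beta)$.

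The hardest step will be \eqref{eq:dFmConv}, and specifically exploiting the uniform Euclidean-length bound on minimizing geodesics for $d_{F_m}$ afforded by \Cref{dFprop}: without it, minimizing sequences for $d_{F_m}$ could escape to regions where the convergence $F_m \to 2\sqrt{W(x_0,\cdot)}$ is not uniform, and no limiting geodesic would exist against which to take a liminf. A secondary technical point is the selection of the slicing values $\tau_m^\pm$ and the control of the traces $u_n(y \pm \tau_m^\pm \nu)$, which I would manage by combining Fubini, the $L^1$ convergence of $u_n$, and a diagonal extraction.
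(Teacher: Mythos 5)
Your proposal is correct and follows essentially the same blow-up strategy as the paper: Besicovitch derivation, slicing in the direction $\nu_u(x_0)$, Young's inequality, the lower envelope $F_m$ and $d_{F_m}$, and the uniform Euclidean length bound from \Cref{dFprop} to compare $d_{F_m}$ with $d_W(x_0,\cdot,\cdot)$. The only minor deviation is in the very last step: you extract a limit geodesic via Arzel\`a--Ascoli and invoke lower semicontinuity of $\gamma\mapsto\int F(\gamma)|\gamma'|$ (which, to be airtight, needs the constant-speed reparametrization as in \Cref{L<k}), whereas the paper avoids taking a limit curve entirely by splitting $F_m = 2\sqrt{W(x_0,\cdot)} - (2\sqrt{W(x_0,\cdot)} - F_m)$ along $\gamma_m$ and bounding the error term directly by $\e\cdot L(\gamma_m)$, using $\int 2\sqrt{W(x_0,\gamma_m)}|\gamma_m'| \ge d_W(x_0,p,q)$.
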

\begin{proof}
We divide the proof into several steps.
\newline
\textbf{Step 1:} Eventually extracting a subsequence (which we do not relabel), we can assume without loss of generality that
\begin{equation}
\label{liminf=lim}
\liminf_{n\to\infty}\f_n(u_n) = \lim_{n\to\infty}\f_n(u_n) < \infty.
\end{equation}
Consequently we are in a position to apply \Cref{prop:cpt} for $n$ large enough and conclude that $u \in BV(\o;\{z_1,\dots,z_k\})$ (see \Cref{f0}).
In order to prove the liminf inequality we will use the blow-up method of Fonseca and M\"uller (see \cite{FonMul_BV}). Let us consider the finite positive Radon measures $\mu_n\in\mathcal{M}^+(\o)$ given by
\[
\mu_n\coloneqq\left(\, \frac{1}{\e_n}W(\cdot,u_n(\cdot)) + \e_n|\nabla u_n(\cdot)|^2  \,\right) \mathcal{L}^N\restr\o.
\]
In view of (\ref{liminf=lim})
we can further assume that $\sup \left\{ \mu_n(\o) : n \in \N \right\} < \infty$, and therefore, up to the extraction of a subsequence (which again we do not relabel), there exists a measure $\mu \in \mathcal{M}^+(\o)$ such that
$\mu_n\stackrel{*}{\rightharpoonup}\mu$ (see \Cref{thm:wscmpt}). Let $\lambda \coloneqq \mathcal{H}^{N - 1}\restr J_u$ and let $x_0 \in J_u$ be such that 
\begin{equation}
\label{goodpoint}
\frac{d\mu}{d\lambda}(x_0) < \infty \quad \text{ and } \quad \frac{d\mathcal{H}^{N - 1}}{d\lambda}(x_0) = 1.
\end{equation}
Recall that \eqref{goodpoint} holds for $\hno$-a.e.\@ $x_0\in J_u$.
Let $Q_\nu \subset \R^N$ be a unit cube centered at the origin with two faces orthogonal to $\nu \coloneqq \nu_u(x_0)$, where the direction $\nu_u(x_0)$ is given as in \Cref{def:jump}, and for $\rho > 0$ write
$Q(x_0,\nu, \rho) \coloneqq x_0 + \rho Q_\nu$. Let $b_1, \dots, b_{N - 1} \in \R^N$ be such that $\{b_1, \dots, b_{N - 1}, \nu\}$ is an orthonormal bases for $\R^N$. Then, for every point $x \in \R^N$ there are constants $y_1, \dots, y_{N - 1}, t \in \R$ such that 
\[
x = \sum_{i = 1}^{N - 1}y_ib_i + t\nu.
\] 
In the following we identify $x$ with $(x', t)$, where $x' \in \R^{N - 1}$ denotes the vector $(y_1, \dots, y_{N - 1})$. Let
\begin{equation}\label{eq:qprime}
Q'(x_0,\nu,\rho) \coloneqq \left\{ x' = (y_1, \dots, y_{N - 1}) : \sum_{i = 1}^{N - 1}y_ib_i + t \nu \in Q(x_0, \nu, \rho) \text{ for all } t \in \left(-\frac{\rho}{2}, \frac{\rho}{2}\right) \right\},
\end{equation}
and notice that with this notation at hand we can write
\[
Q(x_0, \nu, \rho) = \left\{x = (x', t) : x' \in Q'(x_0,\nu,\rho) \text{ and } t \in (-\rho/2,\rho/2) \right \}.
\]
Moreover, we define
\begin{align*}
Q^+(x_0,\nu, \rho) & \coloneqq \left\{x = (x', t) : x' \in Q'(x_0, \nu, \rho) \text{ and } t \in \left(0, \frac{\rho}{2}\right)\right\}, \\
Q^-(x_0,\nu, \rho) & \coloneqq \left\{x = (x', t) : x' \in Q'(x_0, \nu, \rho) \text{ and } t \in \left(- \frac{\rho}{2}, 0\right)\right\}.
\end{align*}
Since by assumption $x_0 \in J_u$, there are two indices $1 \le i_1 < i_2 \le k$ such that for every $\e > 0$ there exists $\overline{\rho} = \overline{\rho}(\e) > 0$ with the property that for every $\rho \le \overline{\rho}$
\begin{equation}
\label{x0jump}
\frac{1}{\rho^N}\int_{Q^+(x_0,\nu, \rho)}|u(x) - z_{i_1}(x_0)|\,dx \le \e \quad \text{ and } \quad \frac{1}{\rho^N}\int_{Q^-(x_0,\nu, \rho)}|u(x) - z_{i_2}(x_0)|\,dx \le \e.
\end{equation}
In particular, if we let $\o^j \coloneqq \{x : u(x) = z_j(x)\}$, we see that 
\begin{align}
\label{slicing1}
\e & \ge \frac{1}{\rho^N}\int_{Q^+(x_0,\nu, \rho)}|u(x) - z_{i_1}(x_0)|\,dx \notag \\
& \ge \sum_{j \neq i_1}\frac{1}{\rho^N}\int_{Q^+(x_0,\nu, \rho) \cap \o^j}|z_j(x) - z_{i_1}(x_0)|\,dx \notag \\
& \ge \sum_{j \neq i_1}\frac{1}{\rho^N}\int_{Q^+(x_0,\nu, \rho) \cap \o^j}(|z_j(x_0) - z_{i_1}(x_0)| - |z_j(x) - z_j(x_0)|)\,dx \notag \\
& \ge \sum_{j \neq i_1}\left(\frac{\delta}{\rho^N}\int_{Q^+(x_0,\nu, \rho)} \ca_{\o^j}(x)\,dx - \Lip(z_j)c(N)\rho\right).
\end{align}
Notice that we can choose $\overline{\rho}$ in such a way that it also satisfies
\begin{equation}
\label{slicing2}
c(N)\overline{\rho}\sum_{j = 1}^k\Lip(z_j) \le \e.
\end{equation}
Combining (\ref{slicing1}) and (\ref{slicing2}) we obtain that for every $j \neq i_1$
\begin{equation}
\label{slicing3}
\frac{2\e}{\delta} \ge \frac{1}{\rho^N}\int_{Q^+(x_0,\nu, \rho)} \ca_{\o^j}(x)\,dx = \frac{1}{\rho}\int_0^{\rho/2}G_j^+(\rho,t)\,dt,
\end{equation}
where 
\[
G_j^+(\rho,t) \coloneqq \frac{1}{\rho^{N - 1}}\int_{Q'(x_0,\nu, \rho)}\ca_{\o^j}(x',t)\,dx'.
\]
In view of (\ref{slicing3}), there exists a measurable set $E^+_j(\rho) \subset (0,\rho/2)$ with 
\begin{equation}
\label{E+j}
\mathcal{L}^1(E^+_j(\rho)) = \left(\frac{1}{2}-\frac{1}{4(k - 1)}\right)\rho
\end{equation}
such that 
\begin{equation}
\label{G+}
G_j^+(\rho,t) \le \frac{8(k - 1)\e}{(2k - 3)\delta} \quad \text{ for } \mathcal{L}^1\text{-a.e.\@ } t \in E^+_j(\rho).
\end{equation}
Indeed, if we assume that (\ref{G+}) does not hold, then for every $E \subset (0,\rho/2)$ with $\mathcal{L}^1(E) = (2k - 3)\rho/4(k-1)$ we would have
\[
\frac{2\e}{\delta} \ge \frac{1}{\rho}\int_EG^+_j(\rho,t)\,dt > \frac{1}{\rho}\frac{(2k - 3)\rho}{4(k - 1)}\frac{8(k - 1)\e}{(2k - 3)\delta} = \frac{2\e}{\delta}.
\] 
Let $E^+_{\rho} \coloneqq \bigcap_{j \neq i_1}E^+_j(\rho)$, and notice that by De Morgan's law and (\ref{E+j})
\[
\mathcal{L}^1(E^+_{\rho}) = \frac{\rho}{2} - \mathcal{L}^1\left(\bigcup_{j \neq i_1}(E^+_j(\rho))^c\right) \ge \frac{\rho}{2} - \sum_{j \neq i_1}\mathcal{L}^1((E^+_j(\rho))^c) = \frac{\rho}{4}.
\]
A similar argument yields the existence of a set $E^-_{\rho} \subset (-\rho/2,0)$ with the property that $\mathcal{L}^1(E^-_{\rho}) \ge \rho/4$ and such the analogous statement to (\ref{G+}) holds for every $j \neq i_2$. Thus, by letting $\e = 1/m$ we can find two decreasing sequences $\{\rho_m\}_{m \in \N}$ and $\{r_m^+\}_{m \in \N}$ and an increasing sequence $\{r_m^-\}_{m \in \N}$ such that
\[
\arraycolsep=1.4pt\def\arraystretch{1.6}
\begin{array}{rll}
\displaystyle \rho_m & \in \left(0, \overline{\rho}\left(\frac{1}{m}\right)\right), & \\
\displaystyle r_m^+ & \in \left(\frac{\rho}{8}, \frac{\rho}{2}\right),  & \quad r_m^+ \in E^+_{\rho_m}, \\
\displaystyle r_m^- & \in \left(-\frac{\rho}{2}, -\frac{\rho}{8}\right), & \quad r_m^- \in E^-_{\rho_m},
\end{array}
\]
and with the property that $\mu(\partial R_m(x_0,\nu)) = 0$ (see \Cref{mubdry=0}), where 
\[
R_m(x_0,\nu) \coloneqq Q'(x_0,\nu,\rho_m) \times (r_m^-,r_m^+).
\]
Moreover, eventually extracting a subsequence (not relabeled), we can assume that for $\mathcal{L}^{N-1}$-a.e.\@ $x' \in Q'(x_0,\nu,\rho_m)$ it holds
\begin{equation}
\label{un->u}
\lim_{n\to\infty} u_n(x',r_m^{\pm}) = u(x',r_m^{\pm}),
\end{equation}
and 
\begin{equation}
\label{goodslices}
u(x',r_m^{\pm}) \in \{z_1(x',r_m^{\pm}),\dots, z_k(x',r_m^{\pm})\}.
\end{equation}
Notice in particular that there are constants $C, c$ such that 
\[
\overline{B(x_0,c\rho_m)} \subset R_m(x_0,\nu) \subset \overline{B(x_0,C\rho_m)}.
\]
Therefore, by Besicovitch's derivation theorem (see \Cref{thm:Bes}), \Cref{lem:convmeas}, and (\ref{goodpoint}) we obtain that 
\[
\frac{d\mu}{d\lambda}(x_0) = \lim_{m \to \infty}\frac{\mu(R_m(x_0,\nu))}{\lambda(R_m(x_0,\nu))} =  \lim_{m \to \infty}\frac{\mu(R_m(x_0,\nu))}{\rho_m^{N - 1}} = \lim_{m \to \infty}\lim_{n \to \infty}\frac{\mu_n(R_m(x_0,\nu))}{\rho_m^{N - 1}}.
\]
Thus, to prove the liminf inequality it is enough to show that 
\begin{equation}
\label{wts-liminf}
\lim_{m\to\infty} \lim_{n\to\infty} \frac{\mu_n(R_m(x_0,\nu))}{\rho_m^{N-1}} \ge d_W(x_0,z_{i_1}(x_0),z_{i_2}(x_0)).
\end{equation}
\textbf{Step 2:} Using the notation introduced in the previous step, we begin by noticing that
\begin{align*}
\mu_n(R_m(x_0,\nu)) & \ge \int_{R_m(x_0,\nu)}2\sqrt{W(x,u_n(x))}|\nabla u_n(x)|\, dx \\
& \ge \int_{Q'(x_0,\nu,\rho_m)}\int_{r_m^-}^{r_m^+}2\sqrt{W((x',t),u_n(x',t))}|\nabla u_n(x',t)\cdot \nu|\, dt\,dx'\\
& = \int_{Q'(x_0,\nu,\rho_m)}\int_{-1}^12\sqrt{W(x',g_m(t), \gamma_{m,n}(x',t))}|\gamma'_{m,n}(x',t)|\,dt\,dx',
\end{align*}
where 
\[
g_m(t) \coloneqq 
\left\{
\begin{array}{ll}
tr_m^+ & \text{ if } t \in (0,1),\\
& \\
tr_m^- & \text{ if } t \in (-1,0),
\end{array}
\right. 
\] 
and $\gamma_{m,n}(x',t) \coloneqq u_n(x',g_m(t))$. Then, if we set
\begin{equation}
\label{Fm}
F_m(z) \coloneqq \min\left\{2\sqrt{W(x,z)} : x \in \overline{R_m(x_0,\nu)}\right\}
\end{equation}
we obtain
\begin{align}
\label{muRm}
\mu_n(R_m) & \ge \int_{Q'(x_0,\nu,\rho_m)}\int_{-1}^1F_m(\gamma_{m,n}(x',t))|\gamma'_{m,n}(x',t)|\,dtdx' \notag \\
& \ge \int_{Q'(x_0,\nu,\rho_m)}d_{F_m}(\gamma_{m,n}(x',1), \gamma_{m,n}(x',-1))\,dx',
\end{align}
where for $p,q \in \R^M$
\begin{equation}
\label{dFm}
d_{F_m}(p,q) \coloneqq \inf \left\{\int_{-1}^1F_m(\gamma(t))|\gamma'(t)|\,dt : \gamma \in \mathcal{A}(p,q)\right\}.
\end{equation}
By (\ref{un->u}), the continuity of $d_{F_m}$, and Fatou's lemma, we see that 
\begin{equation}
\label{wtsliminf}
\liminf_{n \to \infty}\mu_n(R_m(x_0,\nu)) \ge \int_{Q'(x_0,\nu,\rho_m)}d_{F_m}(u(x',r_m^+), u(x',r_m^-))\,dx'.
\end{equation}
Let 
\[
Q'_m(i,j) \coloneqq Q'(x_0,\nu,\rho_m) \cap \{x' : u(x',r_m^+) = z_i(x',r_m^+), u(x',r_m^-) = z_j(x',r_m^-)\},
\]
and notice that in view of (\ref{goodslices}) we can write
\[
Q'(x_0,\nu,\rho_m) = T_m \cup \bigcup_{i,j = 1}^kQ'_m(i,j), 
\]
where $\mathcal{L}^{N - 1}(T_m) = 0$. Observe that as a consequence of (\ref{Fm}) and (\ref{dFm}) we have
\begin{align}
\label{CQmij}
\int_{Q'_m(i,j)}d_{F_m}(z_i(x',r_m^+), z_j(x',r_m^-))\,dx' & \le \int_{Q'_m(i,j)}d_W(x_0,z_i(x',r_m^+), z_j(x',r_m^-))\,dx' \notag \\
& \le C\mathcal{L}^{N - 1}(Q'_m(i,j)),
\end{align}
where $C$ is a constant that only depends on $W$, $z_i$, and $z_j$. Recalling the definition of $r_m^{\pm}$, if $i \neq i_1$ we have
\begin{equation}
\label{HQm}
\mathcal{L}^{N - 1}(Q_m(i,j)) \le \rho_m^{N - 1}G_i^+(\rho_m,r_m) \le \rho_m^{N - 1}\frac{8(k - 1)}{(2k - 3)\delta}\frac{1}{m}.
\end{equation}
Similar computations hold if $j \neq i_2$. Consequently, combining (\ref{wtsliminf}), (\ref{CQmij}), and (\ref{HQm}) we arrive at
\begin{align}
\label{-k/m}
\liminf_{n \to \infty}\mu_n(R_m) & \ge \sum_{i, j = 1}^k\int_{Q'_m(i,j)}d_{F_m}(z_i(x',r_m^+), z_j(x',r_m^-))\,dx' \notag \\
& \ge \int_{Q'(x_0,\nu,\rho_m)}d_{F_m}(z_{i_1}(x',r_m^+), z_{i_2}(x',r_m^-))\,dx' - \frac{C \rho_m^{N - 1}}{m},
\end{align}
for some positive constant $C$. Using the fact that for every $p,q,z \in \R^M$
\[
|d_{F_m}(p,z) - d_{F_m}(z,q)| \le d_{F_m}(p,q) \le d_W(x_0,p,q)
\]  
and the continuity of $d_W(x_0,\cdot,\cdot)$, we obtain
\begin{equation}
\label{putx0}
d_{F_m}(z_{i_1}(x',r_m^+), z_{i_2}(x',r_m^-)) \ge d_{F_m}(z_{i_1}(x_0), z_{i_2}(x_0)) + \mathcal{O}(1).
\end{equation}
Combining (\ref{-k/m}) and (\ref{putx0}), we see that 
\begin{align*}
\lim_{m \to \infty}\liminf_{n \to \infty}\frac{\mu_n(R_m)}{\rho_m^{N - 1}} & \ge \liminf_{m \to \infty} d_{F_m}(z_{i_1}(x_0), z_{i_2}(x_0)).
\end{align*}
Therefore, in order to prove (\ref{wts-liminf}) it is enough to show that
\begin{equation}
\label{finalwtsliminf}
\lim_{m \to \infty}d_{F_m}(z_{i_1}(x_0),z_{i_2}(x_0)) = d_W(x_0, z_{i_1}(x_0),z_{i_2}(x_0)).
\end{equation}
\textbf{Step 3:} This step is dedicated to the proof of (\ref{finalwtsliminf}). We claim that the sequence $\{F_m\}_{m \in \N}$ converges uniformly to $2\sqrt{W(x_0,\cdot)}$ on every compact subset of $\R^M$. Indeed, as one can readily check, the map $z \mapsto F_m(z)$ is continuous for every $m$, while the map $m \mapsto F_m(z)$ is nondecreasing for every $z$. To conclude, notice that $F_m(z) \to 2\sqrt{W(x_0,z)}$ for every $z \in \R^M$ as $m \to \infty$. Since the map $z \mapsto 2\sqrt{W(x_0,z)}$ is continuous by assumption (see (\ref{H1})), we are in a position to apply Dini's convergence theorem. This proves the claim.

Let $p \coloneqq z_{i_1}(x_0)$, $q \coloneqq z_{i_2}(x_0)$ and, using the notation introduced in \Cref{dFprop}, notice that (see (\ref{H2}); see also \Cref{RemR}) there exists $m_1$ such that if $m \ge m_1$ then for every $i \neq j$
\[
\mathcal{N}_{\delta/2}(z_i(R_m(x_0,\nu))) \cap \mathcal{N}_{\delta/2}(z_j(R_m(x_0,\nu))) = \emptyset.
\]
Thus, if $m \ge m_1$, we are in a position to apply the results of \Cref{dFprop} and \Cref{UB} and conclude that there exists a minimizing geodesic for $d_{F_m}(p,q)$, namely $\gamma_m \in \mathcal{A}(p,q)$, such that if we set
\[
\lambda \coloneqq \max\left\{|z_i(x)| : i \in \{1, \dots, k\}, x \in \overline{\o} \right\}
\]
then
\begin{align}
L(\gamma_m) & \le \Lambda(\lambda, \mathcal{R}_{m_1}(x_0,\nu)), \label{Lgmbound} \\
\|\gamma_m\|_{L^{\infty}((-1,1);\R^M)} & \le \Lambda(\lambda, \mathcal{R}_{m_1}(x_0,\nu)) + \lambda. \notag
\end{align} 
Consequently, given $\e > 0$, we can find $m_2 \ge m_1$ with the property that for every $m \ge m_2$ 
\begin{equation}
\label{FmWuniform}
\left|F_m(z) - 2\sqrt{W(0,z)}\right| < \e
\end{equation}
for every $z \in B(0, \Lambda(\lambda, \mathcal{R}_{m_1}(x_0,\nu)) + \lambda)$. Then, from (\ref{Lgmbound}) and (\ref{FmWuniform}) we see that
\begin{align*}
d_{F_m}(p,q) & = \int_{-1}^1F_m(\gamma_m(t))|\gamma_m'(t)|\,dt \\
& = \int_{-1}^12\sqrt{W(x_0,\gamma_m(t))}|\gamma_m'(t)|\,dt - \int_{-1}^1\left[2\sqrt{W(x_0,\gamma_m(t))} - F_m(\gamma_m(t))\right]|\gamma_m'(t)|\,dt \\
& \ge d_W(x_0,p,q) - \e \Lambda(\lambda, \mathcal{R}_{m_1}(x_0,\nu)).
\end{align*}
Letting $m \to \infty$ in the previous inequality we obtain
\[
d_W(x_0,p,q) \ge \limsup_{m \to \infty}d_{F_m}(p,q) \ge \liminf_{m \to \infty}d_{F_m}(p,q) \ge d_W(x_0,p,q) - \e\Lambda(\lambda, \mathcal{R}_{m_1}(x_0,\nu)).
\]
The desired inequality (\ref{finalwtsliminf}) follows immediately by letting $\e \to 0^+$.
\end{proof}

%%%%%%%%%%%%%%%%%%%%%%%%%%%%%%%%%%%%%%%%%%%%%%%%
%%%%%%%%%%%%%%%%%%%%%%%%%%%%%%%%%%%%%%%%%%%%%%%%

\subsection{Limsup inequality}\label{sec:limsup}

This section is devoted to proving the following result, which concludes the proof of \Cref{thm:main}.

\begin{proposition}\label{prop:limsup}
Let $W$ be as is \eqref{H1}--\eqref{H4}. Then, for every $u \in BV(\o; z_1, \dots, z_k)$ there exists a sequence $\{u_n\}_{n \in \N}$ of functions in $H^1(\o; \R^M)$ such that $u_n \to u$ in $L^1(\o; \R^M)$ and $\f_n(u_n) \to \f_0(u)$.
\end{proposition}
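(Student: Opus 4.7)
The plan is to follow the blueprint laid out at the end of the ``Sketch of the strategy'' section. First I would invoke the approximation result (\Cref{lem:approxregular}) and the lower semicontinuity of $\f_0$ (which is a consequence of \Cref{prop:liminf}) together with a diagonal argument to reduce to the case of a polyhedral partition, i.e.\@ $u=\sum_{i=1}^k z_i\ca_{\o^i}$ with $\{\o^1,\dots,\o^k\}$ a Caccioppoli partition of $\o$ such that, for each $i$, $\partial\o^i\cap\o$ is contained in a finite union of hyperplanes. By a further localization argument and the countable additivity of the jump-set integral defining $\f_0$, it then suffices to construct a sequence converging to $u$ in $L^1$ in a neighborhood of a single connected component $\Sigma$ of $\partial^*\o^i\cap\partial^*\o^j$, taking $\Sigma\subset\{x_N=0\}$ and $u^-(x)=z_1(x)$, $u^+(x)=z_2(x)$ on $\Sigma$ without loss of generality.

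For the local construction I would proceed in three phases. First, tile $\Sigma$ by a grid of $(N-1)$-dimensional cubes $Q'(\overline{y}_\alpha,r_n)$ of side $r_n\to 0$ centered at points $\overline{y}_\alpha\in\Sigma$. For each $\alpha$, take an (almost) minimizing geodesic $\gamma_\alpha\in\mathcal{A}(z_1(\overline{y}_\alpha),z_2(\overline{y}_\alpha))$ for $d_W(\overline{y}_\alpha,\cdot,\cdot)$; by \Cref{dFprop} and \Cref{UB} these have uniformly bounded Euclidean length, and by a reparametrization we can assume $\gamma_\alpha$ is defined on $(-\tau_n/2,\tau_n/2)$ for a suitable $\tau_n=o(1)$ satisfying $\e_n/\tau_n\to 0$ and optimizing the Young-inequality so that
\[
\int_{-\tau_n/2}^{\tau_n/2}\left[\frac{1}{\e_n}W(\overline{y}_\alpha,\gamma_\alpha(t))+\e_n|\gamma_\alpha'(t)|^2\right]dt
\;\le\; d_W(\overline{y}_\alpha,z_1(\overline{y}_\alpha),z_2(\overline{y}_\alpha))+o(1).
\]
Define $u_n(y,t):=\gamma_\alpha(t)$ for $(y,t)\in Q'(\overline{y}_\alpha,r_n)\times(-\tau_n/2,\tau_n/2)$. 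Second (horizontal gluing), on a thin strip around the interfaces between adjacent cubes, interpolate between the profiles $\gamma_\alpha$ and $\gamma_\beta$ of neighboring cubes by a convex combination driven by a cutoff function whose gradient is of order $1/r_n$; the key point is that both profiles have values in a uniformly bounded set and the $\e_n$-prefactor on the gradient term together with the small transversal width $\tau_n$ makes the extra energy of order $\e_n\tau_n r_n^{N-2}/r_n^2=o(r_n^{N-1})$ summed over the $O(r_n^{-(N-1)})$ cubes. Third (vertical gluing), to interpolate at $t=\pm\tau_n/2$ between the constant-in-$y$ endpoints $z_i(\overline{y}_\alpha)$ and the true values $z_i(y,\pm\tau_n/2)$, use a linear interpolation on a further thin layer of thickness $s_n$ with $\e_n\ll s_n\ll\tau_n$; since the potential vanishes quadratically near each well (assumption \eqref{H3}) and the interpolation error is $O(r_n)$ by the Lipschitz continuity of $z_i$, both the $W/\e_n$ and $\e_n|\nabla|^2$ contributions on this layer are $o(r_n^{N-1})$ after summation.

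To conclude, using \Cref{dWisLipsc} (which guarantees that $x\mapsto d_W(x,z_1(x),z_2(x))$ is Lipschitz, so the discretized sum converges to $\int_\Sigma d_W(x,z_1(x),z_2(x))\,d\hno$) and Young's inequality applied inside each cube, the main contribution is
\[
\sum_\alpha r_n^{N-1}\bigl[d_W(\overline{y}_\alpha,z_1(\overline{y}_\alpha),z_2(\overline{y}_\alpha))+o(1)\bigr]
\;\longrightarrow\; \int_\Sigma d_W(x,z_1(x),z_2(x))\dhno(x),
\]
while the two gluing regions contribute $o(1)$ by the estimates above. Applying this construction on each flat piece of the interface (and extending $u_n$ to equal $z_i$ on the interior of each $\o^i$ outside the tubular neighborhoods, which costs no energy because $W$ vanishes there) yields a sequence for which $\limsup_n \f_n(u_n)\le \f_0(u)$. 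The convergence $u_n\to u$ in $L^1$ follows since $u_n$ differs from $u$ only on a set of measure $O(\tau_n)$ and is uniformly bounded on that set. The main technical obstacle is the bookkeeping needed to make the horizontal gluing truly negligible: one must keep $\tau_n$ small enough for the vertical gluing to have $o(1)$ cost per cube, yet large enough (compared to $\e_n$) for the transversal profile to ``feel'' only the geodesic energy; balancing these constraints, together with making the convex combinations $H^1$ without hitting regions where $W$ is large, is where all the careful work lies.
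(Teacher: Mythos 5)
Your proposal follows essentially the same route as the paper's proof (\Cref{lem:approxregular} $\rightarrow$ tiling by $(N-1)$-cubes $\rightarrow$ geodesic profiles $\rightarrow$ Modica-type reparametrization as in \Cref{lem:constructionprofile} $\rightarrow$ horizontal gluing by cutoffs $\rightarrow$ vertical gluing by linear interpolation $\rightarrow$ diagonalization), and the energy bookkeeping you sketch is roughly what the paper's Step 5 delivers with the explicit choices $r_n=\e_n^{1/6}$, $r_n'=(1-\e_n^{2/3})r_n$, $\ell_n=\overline L\,\e_n^{5/8}$, $\lambda=\e_n^{3/4}$.

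There is, however, one substantive gap. You propose to ``apply this construction on each flat piece of the interface,'' but the flat pieces of $\partial\o^i\cap\o$ meet along the \emph{singular set} $S$ of the polyhedral partition (edges, corners, and the intersection with $\partial\o$), where the tubular neighborhoods of two or more faces overlap and the local constructions conflict. This cannot simply be ignored: near $S$ there is no single hyperplane, no well-defined $\nu$, and no single pair of ``adjacent'' wells. The paper handles this by carving out a tubular neighborhood $S_n=\mathcal N_{\Theta\e_n}(S)$ with $\mathcal L^N(S_n)\le D\e_n^2$, performing the profile construction only on the sets $A^n_{m,i}$ (which are made pairwise disjoint precisely by removing $S_n$), verifying explicit Lipschitz bounds on the pieces, and then invoking Kirszbraun's extension theorem to patch $u_n$ to a globally defined Lipschitz function whose Lipschitz constant is controlled by that of the pieces; the energy on $S_n$ is then shown negligible using $\mathcal L^N(S_n)=O(\e_n^2)$ together with the $L^\infty$ and Lipschitz bounds. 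Without some analogue of this argument (in particular, an extension step that controls $|\nabla u_n|$ globally), the construction you describe is not a well-defined $H^1$ function near $S$ and the energy there is not estimated.

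A smaller point: you invoke ``the lower semicontinuity of $\f_0$ (a consequence of \Cref{prop:liminf})'' in the reduction step, but this is not needed. \Cref{lem:approxregular} gives $\f_0(u_n)\to\f_0(u)$ directly, and the diagonalization to combine it with the recovery sequences for polyhedral data requires only this convergence, not lower semicontinuity.
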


We split the proof in three lemmas. In the first result presented below, we exhibit a sequence
$\{u_n\}_{n \in \N}$ of functions with polyhedral jump set which approximates the function $u$ and furthermore satisfies $\f_0(u_n) \to \f_0(u)$.
This is a straightforward consequence of a classical result due to Baldo (see \cite[Lemma 3.1]{Bal}).
In the following we say that $U \subset \o$ is a \emph{polyhedral set} if $\partial U \cap \o$ is contained in a finite union of hyperplanes.

\begin{lemma}\label{lem:approxregular}
Under the assumptions of \Cref{prop:limsup}, let $u\in BV(\o;z_1,\dots,z_k)$. Then there exists a sequence $\{u_n\}_{n \in \N}$ of functions in $BV(\o; z_1, \dots, z_k)$ of the form
\[
u_n = \sum_{i=1}^k z_i \ca_{\o^i_n},
\]
where $\o^i_n$ is a polyhedral set for each $i \in \{1,\dots,k\}$ and each $n \in \N$, such that $u_n \to u$ in $L^1(\o; \R^M)$ and $\f_0(u_n)\to \f_0(u)$.
\end{lemma}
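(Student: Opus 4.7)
The plan is to reduce the statement to the polyhedral approximation of Caccioppoli partitions due to Baldo. By Remark \ref{CPR}, associated with $u$ there is a Caccioppoli partition $\{\o^1,\dots,\o^k\}$ of $\o$ such that $u(x) = z_i(x)$ for $\mathcal{L}^N$-a.e.\@ $x\in\o^i$, and moreover
\[
\f_0(u) = \sum_{i=1}^{k-1}\sum_{j=i+1}^k \int_{\partial^*\o^i\cap\partial^*\o^j} d_W(x,z_i(x),z_j(x))\dhno(x).
\]
First I would invoke \cite[Lemma 3.1]{Bal} (and its proof) to obtain a sequence of polyhedral Caccioppoli partitions $\{\o_n^1,\dots,\o_n^k\}$ of $\o$ with the property that $\ca_{\o_n^i}\to\ca_{\o^i}$ in $L^1(\o)$ for every $i\in\{1,\dots,k\}$, and such that the nonnegative Radon measures $\mu_n^{ij}\coloneqq \hno\restr(\partial^*\o_n^i\cap\partial^*\o_n^j)$ converge to $\mu^{ij}\coloneqq\hno\restr(\partial^*\o^i\cap\partial^*\o^j)$ in the sense that $\mu_n^{ij}\stackrel{*}{\rightharpoonup}\mu^{ij}$ together with $\mu_n^{ij}(\o)\to\mu^{ij}(\o)$ for every pair of indices.

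Then I would set $u_n\coloneqq \sum_{i=1}^k z_i\ca_{\o_n^i}$, which clearly belongs to $BV(\o;z_1,\dots,z_k)$ and has polyhedral interface. The $L^1$-convergence $u_n\to u$ is immediate from $\ca_{\o_n^i}\to\ca_{\o^i}$ in $L^1(\o)$ together with the uniform boundedness of each $z_i$ on $\overline{\o}$ (granted by Lipschitz continuity). For the energy convergence, I would observe that by \Cref{dWisLipsc} the maps $\varphi_{ij}\colon\overline{\o}\to[0,\infty)$ given by $\varphi_{ij}(x)\coloneqq d_W(x,z_i(x),z_j(x))$ are continuous and bounded. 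The convergence $\mu_n^{ij}\stackrel{*}{\rightharpoonup}\mu^{ij}$ together with $\mu_n^{ij}(\o)\to\mu^{ij}(\o)$ upgrades to convergence in $(C_b(\o))'$ (see the lemma following \Cref{def:Cb}), yielding
\[
\f_0(u_n) = \sum_{i=1}^{k-1}\sum_{j=i+1}^k \int_\o \varphi_{ij}\dd\mu_n^{ij} \longrightarrow \sum_{i=1}^{k-1}\sum_{j=i+1}^k\int_\o \varphi_{ij}\dd\mu^{ij} = \f_0(u),
\]
which completes the proof.

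The main obstacle is verifying that Baldo's construction indeed provides convergence of each \emph{pairwise} interface measure $\mu_n^{ij}$, and not merely of the total perimeter measure of the partition. This is essentially encoded in Baldo's approach, which produces the polyhedral partition by smoothing and then slicing along a common fine grid, so that the individual interfaces $\partial^*\o_n^i\cap\partial^*\o_n^j$ are tracked explicitly throughout the construction. Once this is in place, the continuity of the weights $\varphi_{ij}$ (a direct consequence of the work already done in \Cref{dWisLipsc}) takes care of the $x$-dependence, which is the only feature of our setting not immediately covered by Baldo's original statement.
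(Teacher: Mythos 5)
Your proposal matches the paper's proof essentially step for step: both reduce to Baldo's polyhedral approximation of Caccioppoli partitions, define $u_n = \sum_i z_i\ca_{\o_n^i}$, and pass to the limit in the energy by combining the $(C_b(\o))'$-convergence of each pairwise interface measure $\mu_n^{ij}$ with the boundedness and continuity of $x\mapsto d_W(x,z_i(x),z_j(x))$ from \Cref{dWisLipsc}. The only cosmetic difference is that you state weak-$*$ convergence plus convergence of total masses and then invoke the upgrade lemma, whereas the paper records the $(C_b(\o))'$-convergence directly as property $(iii)$ extracted from \cite[Proposition A.2 and Lemma A.7]{Bal}; your concern about tracking pairwise interfaces is exactly what those two cited results handle.
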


\begin{proof}
Write $u = \sum_{i=1}^k z_i\ca_{\o^i}$, where $\{\o^i\}_{i=1}^k$ is a Caccioppoli partition of $\o$ (see \Cref{CP} and \Cref{CPR}). Thanks to \cite[Proposition A.2]{Bal} and as a consequence of the proof of \cite[Lemma A.7]{Bal}, it is possible to find a sequence $\{\{\o^i_n\}_{i=1}^k\}_{n \in \N}$ of Caccioppoli partitions of $\o$ with the following properties:
\begin{itemize}
\item[$(i)$] $\o^i_n$ is a polyhedral set;
\item[$(ii)$] $\ca_{\o^i_n}\to\ca_{\o^i}$ in $L^1(\o)$;
\item[$(iii)$] $\mu^{ij}_n$ converges in $(C_b(\o))'$ to $\mu^{ij}$(see Definition \ref{def:Cb}), where for each $i\neq j\in\{1,\dots,k\}$, and $n \in \N$ the measures $\mu^{ij}_n, \mu^{ij} \in \mathcal{M}^+(\o)$ are given by
\[
\mu^{ij}_n\coloneqq\hno\restr(\partial^*\o^i_n\cap\partial^*\o^j_n),\quad\quad\quad\quad
\mu^{ij}\coloneqq\hno\restr(\partial^*\o^i\cap\partial^*\o^j).
\]
\end{itemize}

Since $d_W\in C_b(\o)$ (see \Cref{dWisLipsc}) and recalling that the $z_i$'s are bounded we get
\begin{align*}
\lim_{n\to\infty}\f_0(u_n) & = \lim_{n\to\infty}\sum_{i=1}^{k - 1} \sum_{j=i+1}^k \int_{\partial^*\o^i_n\cap \partial^*\o^j_n}d_W(x, z_i(x), z_j(x)) \dhno(x) \\
& = \lim_{n\to\infty}\sum_{i=1}^{k - 1} \sum_{j=i+1}^k \int_\o d_W(x, z_i(x), z_j(x)) \,d \mu^{ij}_n(x) \\
& = \sum_{i=1}^{k - 1} \sum_{j=i+1}^k \int_\o d_W(x, z_i(x), z_j(x)) \,d\mu^{ij}(x) \\
& = \f_0(u),
\end{align*}
where in the previous to last step we used property $(iii)$ above.
\end{proof}

A key ingredient in the proof of the limsup inequality is a reparametrization due to Modica (see \cite[Proposition 2]{Mod_Gra}, and \cite[Lemma 3.2]{Bal}).
Being crucial to our construction of the recovery sequence, we present the proof for the reader's convenience.

\begin{lemma}\label{lem:constructionprofile}
Under the assumptions of \Cref{prop:limsup}, fix $\lambda>0$, $\e>0$, $x\in\o$, and $p,q\in\R^M$.
Let $\gamma\in \mathcal{A}(p,q)$ be the parametrization of a curve of class $C^1$ with $\gamma'(s) \neq 0$ for all $s \in (-1,1)$.
Then there exist $\tau>0$ with
\begin{equation}\label{eq:eta}
\tau \le \frac{\e}{\sqrt{\lambda}}L(\gamma) = \frac{\e}{\sqrt{\lambda}}\int_{-1}^1 |\gamma'(t)|\,dt,
\end{equation}
and $g \in C^1((0,\tau); [-1,1])$ such that
\begin{equation}\label{eq:reparametrization}
(g'(t))^2 = \frac{\lambda + W(x,\gamma(g(t)))}{\e^2 |\gamma'(g(t))|^2}
\end{equation}
for all $t\in(0,\tau)$, $g(0)=-1$, $g(\tau)=1$, and
\begin{multline}
\label{modicarepr}
\int_{0}^{\tau} \left[\frac{1}{\e} W\left(x, \gamma(g(t)) \right) +\e |\gamma'\left( g(t) \right)|^2\left(g'(t)\right)^2 \right] \,dt \\ 
\le \int_{-1}^1 2 \sqrt{W(x, \gamma(s))}|\gamma'(s)|\,ds + 2\sqrt{\lambda} \int_{-1}^1 |\gamma'(s)| \,ds.
\end{multline}
\end{lemma}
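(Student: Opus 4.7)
The plan is to define the reparametrization $g$ implicitly as the inverse of a primitive, verify the ODE by the inverse function theorem, and then obtain the energy estimate by combining a change of variables with a simple algebraic inequality.

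\textbf{Definition of $g$.} I would introduce the auxiliary function $\phi \colon [-1,1] \to [0,\infty)$ via
\[
\phi(s) \coloneqq \int_{-1}^s \frac{\e\,|\gamma'(\sigma)|}{\sqrt{\lambda + W(x,\gamma(\sigma))}}\,d\sigma.
\]
Since $\gamma \in C^1$ with $\gamma'$ nowhere vanishing and $W(x,\cdot)$ is continuous (by (\ref{H1})), the integrand is continuous and strictly positive on $[-1,1]$. Hence $\phi \in C^1([-1,1])$ is strictly increasing with $\phi(-1)=0$, and I set $\tau \coloneqq \phi(1)$ and $g \coloneqq \phi^{-1} \colon [0,\tau] \to [-1,1]$. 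By the inverse function theorem $g \in C^1((0,\tau))$, $g(0)=-1$, $g(\tau)=1$, and
\[
g'(t) \;=\; \frac{1}{\phi'(g(t))} \;=\; \frac{\sqrt{\lambda + W(x,\gamma(g(t)))}}{\e\,|\gamma'(g(t))|},
\]
so squaring yields \eqref{eq:reparametrization}. The bound \eqref{eq:eta} on $\tau$ is immediate from $\sqrt{\lambda + W} \ge \sqrt{\lambda}$:
\[
\tau \;=\; \phi(1) \;\le\; \frac{\e}{\sqrt{\lambda}} \int_{-1}^1 |\gamma'(\sigma)|\,d\sigma.
\]

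\textbf{Equipartition and change of variables.} Using the ODE \eqref{eq:reparametrization}, the integrand in \eqref{modicarepr} simplifies in the characteristic Modica fashion:
\[
\frac{1}{\e}W(x,\gamma(g(t))) + \e\,|\gamma'(g(t))|^2\,(g'(t))^2 \;=\; \frac{2W(x,\gamma(g(t))) + \lambda}{\e}.
\]
Changing variables via $s = g(t)$, so that $dt = \e\,|\gamma'(s)|/\sqrt{\lambda + W(x,\gamma(s))}\,ds$, the left-hand side of \eqref{modicarepr} becomes
\[
\int_{-1}^{1} \frac{\bigl(2W(x,\gamma(s)) + \lambda\bigr)\,|\gamma'(s)|}{\sqrt{\lambda + W(x,\gamma(s))}}\,ds.
\]

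\textbf{The algebraic inequality.} The estimate \eqref{modicarepr} will then follow from the pointwise bound, valid for all $w, \lambda \ge 0$,
\[
\frac{2w + \lambda}{\sqrt{w + \lambda}} \;=\; \sqrt{w+\lambda} + \frac{w}{\sqrt{w+\lambda}} \;\le\; \bigl(\sqrt{w}+\sqrt{\lambda}\bigr) + \sqrt{w} \;\le\; 2\sqrt{w} + 2\sqrt{\lambda},
\]
applied with $w = W(x,\gamma(s))$ and integrated against $|\gamma'(s)|\,ds$.

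There is no real obstacle here: the construction is the classical equipartition-of-energy reparametrization, and the only slightly delicate point is ensuring $\phi$ is a bona fide $C^1$ diffeomorphism, which is guaranteed by the non-vanishing of $\gamma'$ and the continuity of $W(x,\cdot)$.
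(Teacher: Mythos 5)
Your construction is correct and follows essentially the same route as the paper: define the primitive $\phi$ (the paper calls it $\Psi$), invert it to get $g$, bound $\tau$ via $\sqrt{\lambda + W}\ge\sqrt{\lambda}$, and then combine the equipartition identity from \eqref{eq:reparametrization} with a change of variables and subadditivity of the square root. Your explicit simplification of the integrand to $(2W+\lambda)/\e$ and the pointwise bound $\frac{2w+\lambda}{\sqrt{w+\lambda}}\le 2\sqrt{w}+2\sqrt{\lambda}$ is in fact a slightly cleaner rendering of the final estimate than the paper's (which passes through the intermediate expression $2|\gamma'||g'|\sqrt{W+\lambda}=\frac{2(W+\lambda)}{\e}$, an upper bound rather than an equality), but the overall argument is the same.
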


\begin{proof}
Define
\[
\Psi(t) \coloneqq \int_{-1}^t \frac{\e |\gamma'(s)|}{\sqrt{\lambda + W(x,\gamma(s))}}\,ds,
\]
and set $\tau\coloneqq\Psi(1)$.
Then $\Psi$ is strictly increasing, and its inverse $g \colon [0,\tau] \to [-1,1]$
is of class $C^1$ in $(0,\tau)$ and satisfies \eqref{eq:reparametrization} for all $t \in (0,\tau)$. Moreover,
\[
\tau = \Psi(1) = \int_{-1}^1 \frac{\e |\gamma'(s)|}{\sqrt{\lambda + W(x,\gamma(s))}} \,ds
	\le \frac{\e}{\sqrt{\lambda}}\int_{-1}^1 |\gamma'(s)| \,ds.
\]
Finally, by (\ref{eq:reparametrization}), a change of variables, and the subadditivity of the square root function we get
\begin{align*}
& \int_0^\tau \left[\frac{1}{\e} W(x, \gamma(g(t))) +\e |\gamma'( g(t))|^2 (g'(t))^2\right] \,dt \\
& \hspace{3cm} = \int_0^\tau 2 |\gamma'( y(t))| |g'(t)| \sqrt{W\left(x, \gamma(g(t)\right) + \lambda} \,dt \\
& \hspace{3cm} = \int_{-1}^1 2 \sqrt{W(x, \gamma(s)) + \lambda}|\gamma'(s)| \,ds \\
& \hspace{3cm} \le \int_{-1}^1 2 \sqrt{W(x, \gamma(s))}|\gamma'(s)|\,ds + 2\sqrt{\lambda}\int_{-1}^1 |\gamma'(s)|\,ds,
\end{align*}
and (\ref{modicarepr}) follows, thus concluding the proof.
\end{proof}

We now turn to the technical construction of the recovery sequence.

\begin{lemma}\label{lem:approxenergy}
Under the assumptions of \Cref{prop:limsup}, let $u \in BV(\o; z_1,\dots,z_k)$ be such that $u = \sum_{i=1}^k z_i \ca_{\o^i}$, where, for each $i=1,\dots,k$, $\partial\o^i \cap \o$ is a polyhedral set. Then there exists a sequence $\{u_n\}_{n \in \N}$ of functions in $H^1(\o; \R^M)$ such that $u_n \to u$ in $L^1(\o; \R^M)$ and $\f_n(u_n) \to \f_0(u)$ as $n\to\infty$.
\end{lemma}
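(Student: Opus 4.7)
The plan follows the two-tier discretization described in the sketch of strategy. Since the surface measure appearing in $\f_0$ is $\hno$-additive and $\partial\o^i\cap\o$ is polyhedral, it suffices to construct the recovery near a single connected component $\Sigma$ of $\partial^*\o^i\cap\partial^*\o^j\cap\o$ lying in a hyperplane; the codimension-two ``edges'' where three or more regions meet are $\hno$-negligible and will be handled by setting $u_n=u$ on a shrinking neighbourhood of them. After a rigid change of coordinates I may assume $\Sigma\subset\{x_N=0\}$, with $u=z_i$ on the $x_N<0$ side and $u=z_j$ on the $x_N>0$ side; points of $\R^N$ are written $x=(y,t)$, with $y\in\R^{N-1}$, $t\in\R$.

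Fix auxiliary scales $\e_n\ll\lambda_n,r_n,\sigma_n,\rho_n\to 0$, whose precise balance will be dictated by the error estimates below. Cover an $(N{-}1)$-cube containing $\Sigma$ by a grid of cubes $\{Q'(\overline{y}_\alpha,r_n)\}_\alpha$ in $\{x_N=0\}$. For each centre $\overline{y}_\alpha$, \Cref{dFprop} applied with $\mathcal R=\{\overline{y}_\alpha\}$ combined with \Cref{UB} produces a minimizing geodesic $\gamma_\alpha\in\mathcal A(z_i(\overline{y}_\alpha),z_j(\overline{y}_\alpha))$ for $d_W(\overline{y}_\alpha,\cdot,\cdot)$ with $L(\gamma_\alpha)$ and $\|\gamma_\alpha\|_\infty$ bounded uniformly in $\alpha$; after a slight mollification I may assume $\gamma_\alpha\in C^1$ with $\gamma_\alpha'\neq 0$. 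Applying \Cref{lem:constructionprofile} with $x=\overline{y}_\alpha$, $\lambda=\lambda_n$, $\e=\e_n$ yields a reparametrization $g_\alpha\in C^1((0,\tau_\alpha);[-1,1])$ whose one-dimensional energy computed at the basepoint $\overline{y}_\alpha$ is at most $d_W(\overline{y}_\alpha,z_i(\overline{y}_\alpha),z_j(\overline{y}_\alpha))+2\sqrt{\lambda_n}L(\gamma_\alpha)$; after a harmless dilation I arrange $\tau_\alpha=\tau_n\coloneqq(\e_n/\sqrt{\lambda_n})\Lambda$ for a fixed $\Lambda$.

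I define $u_n$ by three concentric gluings. On the interior slab $Q'(\overline{y}_\alpha,r_n-\rho_n)\times(-\tau_n,\tau_n)$ set $u_n(y,t)\coloneqq\gamma_\alpha(g_\alpha(|t|))$ with the signed-profile convention ensuring continuity at $t=0$. In the upper buffer $Q'(\overline{y}_\alpha,r_n)\times(\tau_n,\tau_n+\sigma_n)$ linearly interpolate in $t$ between $z_j(\overline{y}_\alpha)$ and $z_j(y,\tau_n+\sigma_n)$, and for $t\ge\tau_n+\sigma_n$ let $u_n(y,t)=z_j(y,t)$; treat the lower buffer symmetrically with $z_i$. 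In the lateral strips of thickness $\rho_n$ about each grid face interpolate linearly in the horizontal variable between $\gamma_\alpha\circ g_\alpha$ and $\gamma_\beta\circ g_\beta$. Outside the tubular neighbourhood of $\Sigma$ of half-width $\tau_n+\sigma_n$ set $u_n=u$. Then $u_n\in H^1(\o;\R^M)$ and $u_n\to u$ in $L^1(\o;\R^M)$ since $u_n\to u$ a.e.\ and $\|u_n\|_\infty$ is uniformly bounded. The energy is estimated piece by piece. The cores sum to a Riemann sum
\[
\sum_\alpha r_n^{N-1}\bigl[d_W(\overline{y}_\alpha,z_i(\overline{y}_\alpha),z_j(\overline{y}_\alpha))+2\sqrt{\lambda_n}L(\gamma_\alpha)\bigr]
\]
which, by continuity of the integrand (\Cref{dWisLipsc}), converges to the contribution of $\Sigma$ to $\f_0(u)$. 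The Lipschitz dependence of $W$ on $x$ turns the replacement of $W(\overline{y}_\alpha,\cdot)$ by $W(x,\cdot)$ in each core into an additive error $O(r_n\tau_n/\e_n)\hno(\Sigma)=O(r_n/\sqrt{\lambda_n})\hno(\Sigma)$. In the buffer layers, \eqref{H3} and the bound $|u_n-z_j(x)|=O(r_n+\sigma_n)$ give $W(x,u_n)=O(r_n^2+\sigma_n^2)$, so the buffer contributes $O\bigl(\sigma_n(r_n^2+\sigma_n^2)/\e_n+\e_n/\sigma_n\bigr)\hno(\Sigma)$. In the lateral strips $W$ is merely bounded and $|\nabla u_n|=O(1/\rho_n)$, yielding $O\bigl(\rho_n\tau_n/\e_n+\e_n\tau_n/\rho_n\bigr)\hno(\Sigma)/r_n$. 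A balanced choice of scales drives all four error terms to zero, so that $\limsup_n\f_n(u_n)\le\f_0(u)$; combined with \Cref{prop:liminf} this gives the claim.

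The main obstacle is the lateral gluing. The geodesics $\gamma_\alpha$ and $\gamma_\beta$ associated to adjacent cubes share endpoints only up to $O(r_n)$ and could otherwise traverse very different regions of $\R^M$, so a naive linear interpolation between $\gamma_\alpha\circ g_\alpha$ and $\gamma_\beta\circ g_\beta$ might pass through points where $W=O(1)$, producing an energy contribution that need not vanish. Overcoming this requires selecting the family $\overline{y}\mapsto\gamma_{\overline{y}}$ so as to depend continuously in $C^0$ on the basepoint: the uniform Euclidean-length and sup-norm bounds from \Cref{dFprop} and \Cref{UB} supply the equicontinuity and uniform boundedness required for an Ascoli--Arzel\`a extraction, so that on each compact set of basepoints a $C^0$-continuous selection can be arranged. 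With such a selection, $\|\gamma_\alpha\circ g_\alpha-\gamma_\beta\circ g_\beta\|_\infty=o(1)$ for adjacent cubes, the lateral interpolant lies in a region where $W=o(1)$, and the scaling goes through. This is the only step where the quantitative length bound of \Cref{dFprop} plays an essential role in the construction.
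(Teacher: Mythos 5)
Your construction mirrors the paper's own: discretize $\Sigma$ into $(N{-}1)$-cubes, choose at each grid centre a minimizing geodesic with the uniform length and sup-norm bounds of \Cref{dFprop}/\Cref{UB}, reparametrize it by \Cref{lem:constructionprofile}, use the geodesic profile on the interior of each cube, interpolate with the ambient well near the top and bottom of the transition layer, glue laterally, and estimate the energy piecewise. The paper glues laterally with a partition of unity $\sum_j\varphi_j u_n^j$ rather than with explicit interpolation strips, but this is cosmetic: in both cases the lateral region has volume that is small relative to $\hno(\Sigma)$, and its energy density is merely bounded by $O(1/\e_n)$, so it is killed by a smallness-of-volume argument.

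The ``main obstacle'' you flag at the end is not a real obstacle, and your proposed remedy is both unnecessary and incorrect. It is unnecessary because your own estimate of the lateral strip energy already uses only $W\le C$ and $|\nabla u_n|\le C/\rho_n$ (no smallness of $W$ is assumed), and you correctly note that the resulting term $O\bigl(\rho_n\tau_n/(\e_n r_n)+\e_n\tau_n/(\rho_n r_n)\bigr)$ vanishes for a balanced choice of scales (e.g.\ $\lambda_n=r_n=\sigma_n=\e_n^{1/2}$, $\rho_n=\e_n^{9/10}$). There is no need for the geodesics $\gamma_\alpha$ to converge as the basepoints approach each other; the exact same phenomenon occurs in the paper's partition-of-unity transition region $I_n$, and the paper handles it by the same ``bounded density $\times$ small volume'' bookkeeping. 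The remedy is also incorrect: Ascoli--Arzel\`a yields precompactness of the family of length-bounded geodesics, not a continuous selection $\overline{y}\mapsto\gamma_{\overline y}$, and since minimizing geodesics for $d_W(\overline y,\cdot,\cdot)$ need not be unique, such a continuous selection may fail to exist. Moreover, the subsequent claim that ``the lateral interpolant lies in a region where $W=o(1)$'' would be false even with such a selection — a geodesic joining two distinct wells necessarily traverses a region where $W$ is of order one, and the interpolant passes through a neighbourhood of that geodesic. Fortunately none of this is needed for your estimate to close.

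Two smaller slips: the formula $u_n(y,t)=\gamma_\alpha(g_\alpha(|t|))$ on $(-\tau_n,\tau_n)$ sends both endpoints $t=\pm\tau_n$ to the \emph{same} value $\gamma_\alpha(1)$, so the ``signed-profile convention'' as written does not produce the required transition from $z_i$ to $z_j$; place the entire transition on one side of $\Sigma$ (the paper uses $t\in(0,\ell_n)$) or explicitly shift $g_\alpha$. Also, ``setting $u_n=u$ on a shrinking neighbourhood'' of the codimension-two edges does not give an $H^1$ function, since $u$ is discontinuous there; the paper instead extends $u_n$ to all of $\o$ via Kirszbraun's theorem and then shows the energy contribution of the singular neighbourhood $S_n$ vanishes, and you need some analogous device.
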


\begin{proof}
Throughout the proof we denote by $C$ a positive constant independent of $n$, which could possibly differ from line to line.
We divide the proof into several steps.
\newline
\textbf{Step 1:} (\emph{definitions of the main objects}).
For $n\in\N$ define
\begin{equation}\label{eq:rn}
r_n\coloneqq\e_n^{\frac{1}{6}},
\quad\quad\quad\quad
r'_n\coloneqq\left(1-\e_n^{\frac{2}{3}}\right)r_n.
\end{equation}
Thanks to \Cref{dFprop} and \Cref{UB}, there exists $\overline{L}>0$ such that for every $x \in \o$ and $i\neq j\in\{1,\dots,k\}$ it is possible to find a minimizing geodesic $\gamma\in\mathcal{A}(z_i(x),z_j(x))$ for $d_W(x, z_i(x), z_j(x))$ satisfying
\begin{equation}\label{eq:L}
L(\gamma)< \overline{L}.
\end{equation}
For each $n\in\N$ set
\[
\ell_n \coloneqq \overline{L}\e_n^{\frac{5}{8}}.
\]
For each $i\in\{1,\dots,k\}$ write
\[
\partial\o^i\cap\o = \bigcup_{m=1}^{p_i} \overline{\Sigma^i_m}\cap\o,
\]
where $p_i\in\N$, the $\Sigma^i_m$'s are pairwise disjoint, and, for each $m\in\{1,\dots,p_i\}$, $\Sigma^i_m$ is contained in a hyperplane with normal $\nu^i_m \in \S^{N-1}$.
%Notice that $\partial\o^i\cap\o\subset\cup_{m=1}^{p_i}  \overline{\Sigma^i_m}\cap\o$ with possibly strict inclusion.
Define the \emph{singular set} of the partition $\{\o^i\}_{i = 1}^k$ as
\[
S \coloneqq \bigcup_{i=1}^k \left[ \bigcup_{m \neq s} \left( \overline{\Sigma_m^i} \cap \overline{\Sigma_s^i} \right) \cup \left( \overline{\partial\o^i} \cap \partial\o \right) \right].
\]
For $i \in \{1, \dots, k\}$ and $m \in \{1, \dots, p_i\}$ set
\[
A^n_{m,i} \coloneqq \left\{ x \in \R^N : x = y + t \nu^i_m, y \in \Sigma^i_m \setminus \mathcal{N}_{\Theta\e_n}(S),
    t \in \left(-\ell_n - \e_n, \ell_n + \e_n\right)\right\},
\]
where $\mathcal{N}_{\Theta\e_n}(S) \coloneqq \{ x \in \R^N : |x-y| < \Theta \e_n \text{ for some } y \in S \}$.
Using the fact that $\partial\o$ is Lipschitz, it is possible to find $\overline{n}\in\N$ and $\Theta>0$ such that for all $n\geq\overline{n}$, $i \in \{1, \dots, k\}$ and $m\in\{1,\dots,p_i\}$ it holds that $A^n_{m,i} \subset \o$, and furthermore that either $A^n_{m,i} = A^n_{s,j}$, or
\begin{equation}\label{eq:disjoint}
A^n_{m,i} \cap A^n_{s,j}=\emptyset
\end{equation} 
for all $n\geq\overline{n}$, $i,j\in\{1,\dots,k\}$, $m\in\{1,\dots,p_i\}$, and $s\in\{1,\dots,p_j\}$ with $m\neq s$ if $i = j$ (see \Cref{limsupfigure}).
Without loss of generality, up to increasing the value of $\overline{n}$, we can assume that for all $i\in\{1,\dots,k\}$ and $m\in\{1,\dots,p_i\}$ there exists only two different indexes $j_1,j_2\in\{1,\dots,k\}$ such that
\[
\left\{ x \in \R^N : x = y + t \nu^i_m, y \in \Sigma^i_m \setminus \mathcal{N}_{\Theta\e_n}(S),
    t \in \left(-0, \ell_n + \e_n\right)\right\} \subset \o^{j_1}
\]
and
\[
\left\{ x \in \R^N : x = y + t \nu^i_m, y \in \Sigma^i_m \setminus \mathcal{N}_{\Theta\e_n}(S),
    t \in \left(-\ell_n - \e_n, 0 \right)\right\} \subset \o^{j_2}.
\]
For $n\in\N$ set $S_n\coloneqq\mathcal{N}_{\Theta \e_n}(S)$, and notice that
\begin{equation}\label{eq:Sn}
\mathcal{L}^N(S_n)\le D \e_n^2,
\end{equation}
for some constant $D>0$ depending only on $\mathcal{H}^{N-2}(S)$.

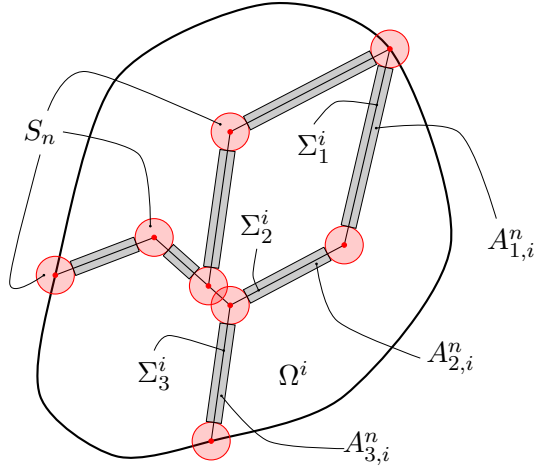
\begin{figure}
\begin{center}
\begin{tikzpicture}[blend group=normal, scale=1.0]
%  \draw[help lines, step = 0.5] (0,0) grid (7,7);

\draw [thick, fill = black, fill opacity = 0.0] plot [smooth cycle] coordinates {(2.0,.2) (3.25,0.4) (5,1) (6.4,3) (5.6,5.6) (4.0, 6.2) (2.0,5.2) (1.2, 2.6) (1,1)};
\draw (3.25,0.4) -- (3.5,2.2) -- (2.5, 3.1) -- (1.2,2.6);
\draw (3.214, 2.457) -- (3.5,4.5) -- (5.6,5.6) -- (5, 3) -- (3.5,2.2);

\filldraw [red] (3.25,0.4) circle (0.03); 
\filldraw [red] (3.5,2.2) circle (0.03);
\filldraw [red] (2.5, 3.1) circle (0.03);
\filldraw [red] (1.2,2.6) circle (0.03);
\filldraw [red] (3.214, 2.457) circle (0.03);
\filldraw [red] (3.5,4.5) circle (0.03);
\filldraw [red] (5.6,5.6) circle (0.03);
\filldraw [red] (5, 3) circle (0.03);

\filldraw [red, fill opacity = 0.2] (3.25,0.4) circle (0.25); 
\filldraw [red, fill opacity = 0.2] (3.5,2.2) circle (0.25);
\filldraw [red, fill opacity = 0.2] (2.5, 3.1) circle (0.25);
\filldraw [red, fill opacity = 0.2] (1.2,2.6) circle (0.25);
\filldraw [red, fill opacity = 0.2] (3.214, 2.457) circle (0.25);
\filldraw [red, fill opacity = 0.2] (3.5,4.5) circle (0.25);
\filldraw [red, fill opacity = 0.2] (5.6,5.6) circle (0.25);
\filldraw [red, fill opacity = 0.2] (5, 3) circle (0.25);

\draw [very thin, fill = black, fill opacity = 0.2] (3.675, 4.705) -- (3.768, 4.527) -- (5.425, 5.395) -- (5.332, 5.573) -- (3.675, 4.705);
\draw [very thin, fill = black, fill opacity = 0.2] (3.185, 0.661) -- (3.383, 0.634) -- (3.565, 1.939) -- (3.367, 1.966) -- (3.185, 0.661);
\draw [very thin, fill = black, fill opacity = 0.2] (4.959, 3.266) -- (5.154, 3.221) -- (5.641, 5.334) -- (5.446, 5.379) -- (4.959, 3.266);
\draw [very thin, fill = black, fill opacity = 0.2] (2.231, 3.104) -- (2.303, 2.917) -- (1.469, 2.596) -- (1.397, 2.783) -- (2.231, 3.104);
\draw [very thin, fill = black, fill opacity = 0.2] (3.150, 2.718) -- (3.348, 2.691) -- (3.564, 4.239) -- (3.366, 4.266) -- (3.150, 2.718);
\draw [very thin, fill = black, fill opacity = 0.2] (4.732, 2.971) -- (4.826, 2.794) -- (3.768, 2.229) -- (3.674, 2.406) -- (4.732, 2.971);
\draw [very thin, fill = black, fill opacity = 0.2] (2.619, 2.858) -- (2.753, 3.007) -- (3.095, 2.699) -- (2.961, 2.550) -- (2.619, 2.858);

\node at (1, 4.5) {$S_n$};
\draw [very thin] plot [smooth] coordinates {(1, 4.2) (0.6, 2.5) (1.05, 2.6)}; 
\filldraw (1.05, 2.6) circle (0.01); 
\draw [very thin] plot [smooth] coordinates {(1.3, 4.5) (2.3, 4.2) (2.45, 3.25)}; 
\filldraw (2.45, 3.25) circle (0.01); 
\draw [very thin] plot [smooth] coordinates {(1.1, 4.78) (1.6, 5.3) (3.37, 4.6)}; 
\filldraw (3.37, 4.6) circle (0.01); 

\node [right] at (4,1.3) {$\Omega^i$};

\node at (2.5, 1.3) {$\Sigma^i_3$};
\draw [very thin] plot [smooth] coordinates {(2.5, 1.6) (2.7, 1.8) (3.415,1.6)}; 
\filldraw (3.415,1.6) circle (0.01); 

\node at (3.85, 3.25) {$\Sigma^i_2$};
\draw [very thin] plot [smooth] coordinates {(3.85, 2.95) (3.8, 2.6) (3.875, 2.4)}; 
\filldraw (3.875, 2.4) circle (0.01); 

\node at (4.6, 4.3) {$\Sigma^i_1$};
\draw [very thin] plot [smooth] coordinates {(4.6, 4.6) (4.9, 4.9) (5.428, 4.857)}; 
\filldraw (5.428, 4.857) circle (0.01); 

\node at (7.2, 3) {$A_{1,i}^n$};
\draw [very thin] plot [smooth] coordinates {(7, 3.35) (6.5, 4.5) (5.42,4.55)}; 
\filldraw (5.42,4.55) circle (0.01); 

\node at (6.4, 1.5) {$A_{2,i}^n$};
\draw [very thin] plot [smooth] coordinates {(6, 1.5) (4.9, 2) (4.635, 2.75)}; 
\filldraw (4.635, 2.75) circle (0.01); 

\node at (5.3, 0.3) {$A_{3,i}^n$};
\draw [very thin] plot [smooth] coordinates {(5, 0.3) (4, 0.4) (3.385, 1)}; 
\filldraw (3.385, 1) circle (0.01); 

\end{tikzpicture}
\caption{The region shaded in red depicts the tubular neighborhood of the singular set of the partition $\{\o^i\}_{i = 1}^k$. The main construction of the recovery sequence is carried out in the region shaded in grey.} 
\label{limsupfigure}
\end{center}
\end{figure}

\textbf{Step 2:} (\emph{definition of $u_n$ close to $\Sigma^i_m$}).
Let us first consider $\o^1\setminus S_n$.
Fix $m \in \{1,\dots, p_1\}$.
Without loss of generality we can assume that $\Sigma^1_m\subset\{x_N=0\}$ and that
the normal vector $\nu^1_m = e_N$ points outside $\o^1$.
In the following we write points $x \in \R^N$ as $x = (y,t)$ where $y \in \R^{N-1}$ and $t \in \R$.
%For $x\in\R^N$ and $r>0$ we set $Q'(x,r)\coloneqq Q'(x,e_N,r)$ (see \eqref{eq:qprime}).
For each $n \in \N$, consider the sets
\[
U_n^j \coloneqq \{ y\in\R^{N-1} : (y,0)\in Q(\overline{y_j}, r_n) \cap (\Sigma^1_m\setminus S_n) \},
\]
where $\overline{y_j}$ ranges among the elements of $\frac{r_n}{2}\Z^{N-1}\times\{0\}$.
In the following we will consider only the indices $j$ for which $U_n^j \neq \emptyset$.
Let $\varphi_n^j\in C_c^\infty(\R^{N-1})$ be a function such that
\begin{equation}\label{eq:varphi1}
0\le\varphi_n^j\le1,
\quad\quad
\varphi_n^j\equiv 1 \text{ in } Q'(\overline{y_j}, e_N, r'_n),
\quad\quad
\varphi_n^j\equiv 0 \text{ on } \R^{N-1}\setminus Q'(\overline{y_j} , e_N, 2r_n-r'_n),
\end{equation}
\begin{equation}\label{eq:varphi2}
\sum_j \varphi_n^j=1 \text{ in } \Sigma^1_m,
\quad\quad\quad\quad
| \nabla \varphi_n^j |\le\frac{C}{r_n-r'_n},
\end{equation}
where $C$ is a positive constant independent of $n$, and we used the notation in \eqref{eq:qprime}.
For every such index $j$, let $\gamma^j\in\mathcal{A}(u^-(\overline{y_j}),  u^+(\overline{y_j}))$ be a minimizing geodesic for $d_W(\overline{y_j}, u^-(\overline{y_j}), u^+( \overline{y_j}))$
such that $L(\gamma^j) < \overline{L}$, where $\overline{L}$ is the constant given in \eqref{eq:L}.
Then it is possible to find a sequence $\{\gamma^j_n\}_{n\in\N}$ of $C^1$ curves in $\mathcal{A}(u^-(\overline{y_j}),  u^+(\overline{y_j}))$ with $(\gamma^j_n)'(s)\neq 0$ for all $s\in(-1,1)$ and
%with
%\begin{equation}\label{eq:lbgamma'}
%|(\gamma^j_n)'|\geq C > 0
%\end{equation} 
%where $C > 0$ is independent of $j$ and $n$, such that
such that
\begin{equation}\label{eq:approxcurves}
L(\gamma^j_n)<\overline{L},\quad\quad\quad
\lim_{n\to\infty} \int_{-1}^1 \sqrt{W( \overline{y_j}, \gamma^j_n(s) )} |(\gamma^j_n)'(s)| \,ds = d_W( \overline{y_j},
u^+( \overline{y_j}) , u^-(\overline{y_j})   ).
\end{equation}
Let $\tau^j_n\in(0,\ell_n)$ and $g^j_n\in C^1((0,\tau^j_n))$ be given by \Cref{lem:constructionprofile} corresponding to the choice of
\[
\e=\e_n, \quad\quad\quad \lambda=\e_n^{\frac{3}{4}}, \quad\quad\quad \gamma=\gamma^j_n.
\]
Extend $g^j_n$ to the whole interval $(0,\infty)$ by setting $g^j_n(s) \coloneqq 1$ for $s \geq \tau^j_n$.
For each $j$, let $s_j,q_j\in\{1,\dots,k\}$ be such that $u^-(\overline{y_j})=z_{s_j}(\overline{y_j})$, and $u^+(\overline{y_j})=z_{q_j}(\overline{y_j})$.
We then define the function $u_n^j$ as follows: 
\begin{itemize}
\item[$(i)$] for $y \in U_n^j$ and $t \in (0, \ell_n)$ set
\[
u^j_n(y,t) \coloneqq \gamma^j_n(g^j_n(t));
\]
\item[$(ii)$] for $y \in U_n^j$ and $t \in (\ell_n, \ell_n+\e_n)$ set
\[
u^j_n(y,t) \coloneqq z_{q_j}(\overline{y_j}) + \frac{t - \ell_n}{\e_n}
	\left[ z_{q_j} \left(y, \ell_n+\e_n \right) - z_{q_j}(\overline{y_j}) \right];
\]
\item[$(iii)$] for $y \in U_n^j$ and $t \in (-\e_n,0)$ set
\[
u^j_n(y,t) \coloneqq z_{s_j}(\overline{y_j}) - \frac{t}{\e_n} \left[ z_{s_j} \left(y, -\e_n \right) - z_{s_j}(\overline{y_j}) \right].
\]
\end{itemize}
Then, for $(y,0) \in \Sigma^1_m \setminus S_n$ and $t \in (-\e_n, \e_n + \ell_n)$ define
\begin{equation}\label{eq:defun}
u_n(y,t) \coloneqq \sum_j \varphi_j(y) u^j_n(y,t).
\end{equation}
Notice that, thanks to \eqref{eq:varphi1}, $u_n(y,t) = u_n^j(y,t)$ for $y \in Q'(\overline{y_j},e_N,  r'_n)$, and the number of non-zero terms in the sum is bounded above by $3^N$.
We repeat the same construction for all $m \in \{1, \dots, p_1\}$, and notice that thanks to \eqref{eq:disjoint} the functions $u_n$ are well defined, as the constructions in this step do not overlap.

Next, for 
\[
x \in \o^1 \setminus \left(  S_n \cup \bigcup_{m=1}^{p_1} \left\{ z + t\nu^1_m \in \R^N : 
z\in \Sigma^1_m \setminus S_n,  t \in \left(-\e_n, \e_n+\ell_n\right) \right\} \right),
\]
define $u_n(x) \coloneqq z_1(x)$.

Finally, we repeat the same argument for $i = 2, \dots, k$ in the sets
\[
\o^i\setminus \left(  S_n \cup \bigcup_{j<i} \,\,\bigcup_{m=1}^{p_j} \left\{ z + t\nu^j_m \in \R^N :  z\in \Sigma^j_m \setminus S_n,  t \in \left(-\e_n, \e_n+\ell_n\right)\right\} \right),
\]
assuming that the normal $\nu^j_m$ points outside the set $\o^j$.
\newline
\textbf{Step 3:} (\emph{estimate of the Lipschitz constant of $u_n$ close to $\Sigma^i_m$}).
Reasoning as in the previous step, we assume without loss of generality that $\Sigma^i_m\subset\{x_N=0\}$.
Using the fact that the number of non-zero terms in the definition of $u_n$ (see \eqref{eq:defun}) is bounded above by $3^N$, we have that
\begin{align}\label{eq:estLipun}
\Lip(u_n) \leq 3^N\sup_j \Lip(\varphi_j u^j_n) 
\leq 3^N C \sup_j [ \Lip(\varphi_j) +  \Lip(u^j_n)]
\end{align}
where in the second step we used  \eqref{eq:varphi1} together with the fact that, by construction, and using \eqref{eq:L} it holds
\begin{equation}\label{eq:Linfty}
\max_j \| u^j_n \|_{L^\infty(\o;\R^M)}\le C\,
\end{equation}
for some constant $C>0$ independent of $n$.
In order to estimate the second term in \eqref{eq:estLipun} we reason as follows.
Let $y_1, y_2\in U^j_n$ and $t_1,t_2\in(0,\ell_n)$.
Using \eqref{eq:reparametrization}, our choice of $\e$ and $\lambda$, and the fact that minimizing geodesics are uniformly bounded, we get
\begin{align}\label{eq:Lip1}
|u^j_n(y_1,t_1) - u^j_n(y_2,t_2)| & = | \gamma^j_n(g^j_n(t_2)) - \gamma^j_n(g^j_n(t_1)) | \nonumber \\
&\leq \frac{(\e_n^{\frac{3}{4}} + C )^{\frac{1}{2}}}{\e_n}  |t_1-t_2|.
\end{align}
Let $y_1, y_2\in U^j_n$ and $t_1,t_2\in[\ell_n, \ell_n+\e_n)$.
Then we can estimate
\begin{align}\label{eq:Lip2}
|u_n^j(y_1, t_1) - u_n^j(y_2, t_2)| & \le \left| u_n^j(y_1, t_1) - \frac{t_1-\ell_n}{\e_n}
	\left[ z_{q_j}(y_2, \ell_n+\e_n)  -  z_{q_j}(\overline{y_j}) \right] \right|  \nonumber \\
&\hspace{2cm}	+ \left |\frac{t_1-\ell_n}{\e_n}\left[ z_q(y_2, \ell_n+\e_n) 
	-  z_{q_j}\overline{y_j} \right]  - u_n^j(y_2, t_2) \right| \nonumber \\
&\leq  \frac{t_1-\ell_n}{\e_n}
	\left| z_{q_j}(y_1,  \ell_n+ \e_n) - z_{q_j}(y_2, \ell_n+\e_n) \right| \nonumber\\
&\hspace{3cm}+|t_2 - t_1|\left| \frac{z_{q_j}(y_1, \ell_n+\e_n)
	- z_{q_j}(\overline{y_j})}{\e_n} \right| \nonumber \\
&\le \Lip(z_{q_j}) |y_1-y_2| + C|t_2 - t_1|\frac{(\e_n^{\frac{5}{4}} + r_n^2)^{\frac{1}{2}}}{\e_n}.
\end{align}
Similar computations show that, if $y_1,y_2 \in U^j_n$ and $t_1, t_2 \in(-\e_n,0)$, we get
\begin{equation}\label{eq:Lip3}
|u_n^j(y_1, t_1) - u_n^j(y_2, t_2)|  \le \Lip(z_{s_j}) |y_1-y_2| + C|t_2 - t_1|\frac{\sqrt{\e_n^2 + r_n^2}}{\e_n}.
\end{equation}
\textbf{Step 4: } (\emph{definition of $u_n$ in $\o$}).
We are now in position to define $u_n$ in the whole $\o$.
Using the estimates \eqref{eq:estLipun}, \eqref{eq:Lip1}, \eqref{eq:Lip2}, and \eqref{eq:Lip3}, Kirszbraun's theorem ensures that it is possible to extend $u_n$ to a Lipschitz function, still denoted by $u_n$, defined in the whole $\o$ in such a way that the Lipschitz constant of the extension is controlled by the Lipschitz constant of the original function $u_n$. It is immediate to verify that $u_n \in H^1(\o;\R^M)$ and that $u_n \to u$ in $L^1(\o;\R^M)$ as $n \to \infty$.
\newline
\textbf{Step 5: } (\emph{estimate of the energy}).
We claim that $\f_n(u_n)\to\f_0(u)$ as $n\to\infty$.
To show this, we split $\o$ into several pieces and compute the asymptotic behavior of the energy $\f_n$ in each of them.
For $i\in\{1,\dots,k\}$,  let $\widetilde{\o}^i_n$ be the set where $u_n\equiv z_i$.
Then
\begin{equation}\label{eq:estenergy1}
\lim_{n\to\infty}\int_{\widetilde{\o}^i_n} \left[\, \frac{1}{\e_n}W(x,u_n(x))
	+ \e_n |\nabla u_n(x)|^2 \,\right] \,d x = 0,
\end{equation}
since $W(x, u_n(x)) = 0$ for $x \in \widetilde{\o}^i$, and $u_n$ has uniformly bounded gradient in that region.

Fix one connected component $\Sigma$ of $(J_u \cap \o) \setminus S_n$.
Without loss of generality, we can assume $\Sigma \subset \{x_N = 0\}$.
We first consider the transition region
\[
I_n\coloneqq\bigcup_j \left\{ (y,t)\in\R^N : (y,0)\in
\Sigma\cap \left( Q(\overline{y_j}, r_n)\setminus Q(\overline{y_j}, r'_n) \right),\,
	t\in (-\e_n,  \ell_n+\e_n) \right\},
\]
which we split in three parts. Let us start with
\[
I^{1}_n\coloneqq\bigcup_j\left\{ (y,t)\in\R^N : (y,0)\in
\Sigma\cap \left( Q(\overline{y_j}, r_n)\setminus Q(\overline{y_j}, r'_n) \right),\,
	t\in (-\e_n,  0) \right\} .
\]
Notice that
\begin{equation}\label{eq:I1n}
\mathcal{L}^N(I^{1}_n) \le C \left(\frac{r_n-r'_n}{r_n}\right)^{N-1}\e_n.
\end{equation}
Indeed, the number of $(N-1)$-dimensional cubes we consider is of the order of $r_n^{1-N}$, for each of which we are integrating over a volume of the order of $\e_n (r_n-r'_n)^{N-1}$.
Since $\|u_n\|_{L^\infty(\o;\R^M)} \le C$ we have
\begin{equation}\label{eq:boundW}
W(x,u_n(x)) \le C,
\end{equation}
for all $x\in I_n$.
Using \eqref{eq:estLipun}, \eqref{eq:Lip3}, \eqref{eq:I1n}, and \eqref{eq:boundW} we get
\begin{align}\label{eq:estenergy2a}
&\int_{I^1_n}  \left[ \frac{1}{\e_n}W(x,u_n(x)) + \e_n |\nabla u_n(x)|^2 \right] \,d x
	\nonumber \\
&\hspace{2cm} \le \left[ \frac{C}{\e_n}
	+ \e_n \left( \frac{C}{r_n-r'_n} + \Lip(u^+)
	+ \frac{\sqrt{\e_n^2 + r_n^2}}{\e_n}
		\right)^2 \right] \mathcal{L}^N(I^1_n) \nonumber \\
&\hspace{2cm}\le C\left[ \frac{1}{\e_n}
	+  \e_n \left( \frac{1}{(r_n-r'_n)^2} + (\Lip(u^+))^2
	+ \frac{\e_n^2 + r_n^2}{\e_n^2} \,\right) \right] 
		\left(\frac{r_n-r'_n}{r_n}\right)^{N-1}\e_n  \nonumber \\
&\hspace{2cm}\le C\left[ 1
	+  \left( \frac{\e_n}{r_n-r'_n}\right)^2 + \e_n^2(\Lip(u^+))^2
	+ \e_n^2 + r_n^2  \right] 
		\left(\frac{r_n-r_n'}{r_n}\right)^{N-1}  \nonumber \\
&\hspace{2cm}= C\left[ 1 + \left( \frac{\e_n}{\e_n^{\frac{1}{6}+\frac{2}{3}}} \right)^2
	+\e_n^2(\Lip(u^+))^2 + \e_n^2 + r_n^2  \right] 
		\e_n^{\frac{2(N-1)}{3}} \nonumber \\
&\hspace{2cm}\to 0,
\end{align}
as $n\to\infty$, where in the previous to last step we used \eqref{eq:rn}.

Next, we prove that the energy contribution in the region
\[
I^{2}_n\coloneqq\bigcup_j \left\{ (y,t)\in\R^N : (y,0)\in
\Sigma\cap \left( Q(\overline{y_j}, r_n)\setminus Q(\overline{y_j}, r'_n) \right),\,
	t\in (0,  \ell_n) \right\},
\]
is asymptotically negligible. We do this by noticing that
\[
\mathcal{L}^N(I^{2}_n) \le C \left(\frac{r_n-r'_n}{r_n}\right)^{N-1}\e_n^{\frac{5}{8}},
\]
and that, in view of \eqref{eq:Lip1} and \eqref{eq:boundW}, we have that
\begin{align}\label{eq:estenergy2b}
\int_{I^2_n}  \left[ \frac{1}{\e_n}W(x,u_n(x)) + \e_n |\nabla u_n|^2 \right] \,d x
&\leq \left[ \frac{C}{\e_n} + \e_n\frac{\e_n^{\frac{3}{4}}+C}{\e_n^2} \right] \mathcal{L}^N(I^2_n) \nonumber \\
&\le C\left[ \e_n^{-1} + \e_n^{-\frac{1}{4}}  \right]  \left(\frac{r_n-r'_n}{r_n}\right)^{N-1}\e_n^{\frac{5}{8}}   \nonumber \\
&\leq C \e_n^{-\frac{3}{8}+\frac{2}{3}(N-1)} + C\e_n^{\frac{2}{3}(N-1)+\frac{3}{8}}  \nonumber \\
&\to 0,
\end{align}
as $n\to\infty$. Finally, we consider
\[
I^{3}_n\coloneqq \bigcup_j \left\{ (y,t)\in\R^N : (y,0)\in
\Sigma\cap \left( Q(\overline{y_j}, r_n)\setminus Q(\overline{y_j}, r'_n) \right),\,
	t\in (\ell_n,  \ell_n+\e_n) \right\},
\]
and notice that
\[
\mathcal{L}^N(I^{3}_n)\leq C \left(\frac{r_n-r'_n}{r_n}\right)^{N-1}\e_n.
\]
In turn, by \eqref{eq:Lip2} and with similar computations to the ones in \eqref{eq:estenergy2a}, we get
\begin{equation}\label{eq:estenergy2c}
\lim_{n\to\infty} \int_{I^3_n}  \left[ \frac{1}{\e_n}W(x,u_n(x)) + \e_n |\nabla u_n(x)|^2 \right] \,d x = 0.
\end{equation}
Therefore, from \eqref{eq:estenergy2a}, \eqref{eq:estenergy2b}, and \eqref{eq:estenergy2c} we deduce that
\begin{equation}\label{eq:estenergy2}
\lim_{n\to\infty} \int_{I_n}  \left[ \frac{1}{\e_n}W(x,u_n(x)) + \e_n |\nabla u_nx)|^2 \right] \,d x = 0.
\end{equation}

We now consider the region
\[
U_n \coloneqq \bigcup_j \left( Q'(\overline{y_j}, e_N, r'_n)\times \left(\ell_n, \ell_n+\e_n\right) \right)
\]
and observe that
\begin{equation}\label{eq:Un}
\mathcal{L}^N(U_n) \le C \left( \frac{r'_n}{r_n} \right)^{N-1} \e_n.
\end{equation}
Let $i \in \{1,\dots,k\}$ be such that $U_n \subset \o^i$.
Then, for $x \in Q'(\overline{y_j}, e_N, r'_n)\times \left(\ell_n, \ell_n+\e_n\right)$ it holds
\begin{align}\label{eq:estW}
W(x,u_n(x)) & = W(x,u_n(x)) - W(x,z_i(x)) \nonumber \\
& \le \Lip(W;K) |u_n(x)-z_i(x)| \nonumber \\
& \le \Lip(W;K) |z_i(\overline{y_j}) - z_i(y, \ell_n+\e_n)| \nonumber \\
& \le C \Lip(W;K) \Lip(z_i) (\e_n^{\frac{5}{4}}+r_n^2)^{\frac{1}{2}},
\end{align}
where in the previous to last step we used the triangle inequality together with fact that $u_n$ is a linear interpolation between $z_i (\overline{y_j})$ and $z_i(y,\ell_n+\e_n)$. Here $K > 0$ is such that $u_n(x), z_i(x) \in B(0,K)$ for all $x \in Q'(\overline{y_j}, e_N, r'_n)\times \left(\ell_n, \ell_n+\e_n\right)$.
Therefore, from \eqref{eq:Lip2} and \eqref{eq:estW} we get
\begin{align}\label{eq:estenergy3}
&\int_{U_n} \left[ \frac{1}{\e_n}W(x,u_n(x)) + \e_n |\nabla u_n|^2 \right] \,d x \nonumber \\
&\hspace{2cm}\le C \left[ \sup_i \Lip(z_i) \frac{(\e_n^{\frac{5}{4}}+r_n^2)^{\frac{1}{2}}}{\e_n}
	+ \e_n \left( \frac{C}{r_n-r'_n} + \Lip(u^+)
	+ \frac{(\e_n^{\frac{5}{4}} + r_n^2)^{\frac{1}{2}}}{\e_n}
		\right)^2 \right] \mathcal{L}^N(U_n) \nonumber \\
&\hspace{2cm} \le C\left[  \frac{(\e_n^{\frac{5}{4}}+r_n^2)^{\frac{1}{2}}}{\e_n} + \frac{\e_n}{(r_n-r_n')^2} + \frac{\e_n^{\frac{5}{4}} + r_n^2}{\e_n} \right] \left(\frac{r'_n}{r_n}\right)^{N-1} \e_n \nonumber \\
&\hspace{2cm} \to 0
\end{align}
as $n\to\infty$, where the last step follows from \eqref{eq:rn} with analogous computations as those we used to deduce \eqref{eq:estenergy2a}.

With a similar argument it is possible to show that
\begin{equation}\label{eq:estenergy4}
\lim_{n\to\infty}\int_{L_n} \left[ \frac{1}{\e_n}W(x,u_n(x)) + \e_n |\nabla u_n(x)|^2 \right] \,dx  = 0, 
\end{equation}
where
\[
L_n \coloneqq \bigcup_j \left( Q'(\overline{y_j}, e_N, r'_n)\times(-\e_n, 0) \right).
\]
Moreover, using \eqref{eq:Sn}, \eqref{eq:estLipun}, and \eqref{eq:Linfty} we also get
\begin{equation}\label{eq:estenergy5}
\lim_{n\to\infty}\int_{S_n} \left[ \frac{1}{\e_n}W(x,u_n(x)) + \e_n |\nabla u_n(x)|^2 \right] \,d x  = 0.
\end{equation}

Finally, we prove that
\begin{equation}\label{eq:estenergy6}
\lim_{n\to\infty}\int_{G'_n} \left[ \frac{1}{\e_n}W(x,u_n(x)) + \e_n |\nabla u_n(x)|^2 \right] \,d x  = \f_0(u),
\end{equation}
where
\[
G'_n \coloneqq \bigcup_j  \left(  Q'(\overline{y_j}, e_N, r'_n) \times \left(0, \ell_n\right) \right) \setminus S_n.
\]
Thanks to \eqref{eq:estenergy2}, we can equivalently consider the region
\[
G_n \coloneqq \bigcup_j  \left( U^j_n \times \left(0, \ell_n\right) \right).
\]
Moreover, the Lipschitz continuity of $W$ gives
\begin{align}\label{eq:approx2}
&\lim_{n\to\infty}\frac{1}{\e_n} \int_{G_n} \left| W(x,u_n(x))
	- W(\overline{y_i},u_n(x)) \right| \,d x \nonumber \\
&\hspace{3cm}\le \lim_{n\to\infty}\frac{1}{\e_n}\sum_j 
	\int_{U^j_n\times(0, \ell_n)} |x-\overline{y_j}| \,dx \nonumber \\
&\hspace{3cm}\le \lim_{n\to\infty}\frac{1}{\e_n}\sum_j 
	\int_{B(\overline{y_j},\ell_n\sqrt{N})} |x-\overline{y_j}| \,dx \nonumber \\
&\hspace{3cm}=\lim_{n\to\infty}\frac{1}{\e_n}\sum_j
	\int_0^{\ell_n\sqrt{N}} t N\omega_N t^{N-1}\,dt \nonumber \\
&\hspace{3cm}\le \lim_{n\to\infty}\frac{1}{\e_n}\frac{C}{(r_n)^{N-1}}
	\frac{N\omega_N}{N+1}\left(\ell_n\sqrt{N}\right)^{N+1}  \nonumber \\
&\hspace{3cm}\leq \lim_{n\to\infty} C \e_n^{\frac{11}{24}N-\frac{5}{24}}= 0,
\end{align}
since $N\geq2$.
Therefore, using the fact that for every $j$ and every $n$
\[
\int_{\tau^j_n}^{\ell_n}
	\left[ \frac{1}{\e_n}W(\overline{y_j},  \gamma^j_n(g^j_n(t)) )
		+ \e_n |(\gamma^j_n(g^j_n))'(t)|^2 \right] \,dt = 0,
\]
we see that
\begin{align}\label{eq:approx4}
&\lim_{n\to\infty}\int_{G'_n} \left[ \frac{1}{\e_n}W(x,u_n(x))
	+ \e_n |\nabla u_n(x)|^2 \right] \,d x  \nonumber \\
&\hspace{2cm}= \lim_{n\to\infty} \int_{G_n} \left[ \frac{1}{\e_n}W(x,u_n(x))
	+ \e_n |\nabla u_n(x)|^2 \right] \,d x  \nonumber \\
&\hspace{2cm}= \lim_{n\to\infty}  \sum_j \int_{U^j_n\times \left(0,\ell_n\right)}
	\left[ \frac{1}{\e_n}W(\overline{y_j},u_n(x))
		+ \e_n |\nabla u_n(x)|^2 \right] \,d x  \nonumber \\
&\hspace{2cm}= \lim_{n\to\infty}  \sum_j \hno(U^j_n)\int_0^{\ell_n}
	\left[ \frac{1}{\e_n}W(\overline{y_j},  \gamma^j_n(g^j_n(t)) )
		+ \e_n |(\gamma^j_n(g^j_n))'(t) |^2 \right] \,dt  \nonumber \\
&\hspace{2cm}\leq \lim_{n\to\infty}
	\sum_j  \hno(U^j_n) d_W\left( \overline{y_j},
	u^+(\overline{y_j}), u^-(\overline{y_j}) \right) \nonumber \\
&\hspace{2cm}=\int_\Sigma d_W(x, u^+(x), u^-(x))\dhno(x),
\end{align}
where in the last step we used the fact that the function $x \mapsto d_W(x, u^+(x), u^-(x))$ is continuous on $\Sigma$ (see \Cref{dWisLipsc}), while the previous to last step follows from \eqref{eq:approxcurves} together with the result of \Cref{lem:constructionprofile}.

Finally, since the number of connected components of $(J_u\cap\o)\setminus S_n$ is finite, by \eqref{eq:estenergy1}, \eqref{eq:estenergy2}, \eqref{eq:estenergy3}, \eqref{eq:estenergy4}, \eqref{eq:estenergy5}, and \eqref{eq:estenergy6} we conclude.
\end{proof}

We are now in position to prove the limsup inequality.

\begin{proof}[Proof of \Cref{prop:limsup}]
For $u\in BV(\o;z_1,\dots,z_k)$, let $\{v_n\}_{n\in\N}$ be the sequence of functions in $BV(\o;z_1,\dots,z_k)$ provided by \Cref{lem:approxregular}. In particular, recall that each $v_n$ can be written as
\[
v_n = \sum_{i=1}^k z_i \ca_{\o^i_n},
\]
where each $\o^i_n$ is a polyhedral set, that $v_n \to u$ in $L^1(\o;\R^M)$, and that
\[
\lim_{n\to\infty} \f_0(v_n) = \f_0(u).
\]
By an application of \Cref{lem:approxenergy}, for each $n \in \N$ there exists a sequence $\{w^n_m\}_{m \in \N}$ of functions in $H^1(\o;\R^M)$ such that $w^n_m \to v_n$ in $L^1(\o;\R^M)$ and
\[
\lim_{m \to \infty}\f_n(w^n_m) = \f_0(v_n).
\]
Therefore, by using a diagonalization argument, it is possible to find a sequence $\{m_n\}_{n\in\N}$ such that the sequence $\{u_n\}_{n\in\N}$ of functions in $ H^1(\o;\R^M)$ defined as $u_n \coloneqq w^n_{m_n}$ is such that $u_n \to u$ in $L^1(\o;R^M)$ and
\[
\lim_{n\to\infty} \f_n(u_n) = \f_0(u).
\]
This concludes the proof.
\end{proof}

%%%%%%%%%%%%%%%%%%%%%%%%%%%%%%%%%%%%%%%%%%%%%%%%
%%%%%%%%%%%%%%%%%%%%%%%%%%%%%%%%%%%%%%%%%%%%%%%%
%%%%%%%%%%%%%%%%%%%%%%%%%%%%%%%%%%%%%%%%%%%%%%%%
%%%%%%%%%%%%%%%%%%%%%%%%%%%%%%%%%%%%%%%%%%%%%%%%

\section{Proofs of the variants of the main results} \label{sec:others}

In this section we discuss how to suitably modify our arguments to obtain the proofs for the several variants we consider.
In the following, in order to keep the notation as simple as possible, the value of the constant $C > 0$ might change from one instance to another.

\subsection{Proof of \Cref{thm:massconstr}}
To prove the compactness result, we notice that
\[
\f_n(u) \le \f_n^{\mathcal{M}}(u)
\]
for all $u \in L^1(\o; \R^M)$.
Therefore, for any sequence $\{u_n\}_{n \in \N} \subset L^1(\o; \R^M)$, we have that
\[
\sup \left\{\f_n^{\mathcal{M}}(u_n) : n \in \N \right\} < \infty \quad \Longrightarrow \quad
\sup \left\{ \f_n(u_n) : n \in \N \right\} < \infty.
\]
By an application of Proposition \ref{prop:cpt}, we get that up to the extraction of a subsequence (which we do not relabel)
$u_n \to u$ in $L^1(\o; \R^M)$, for some $u \in BV(\o; z_1, \dots, z_k)$ with $\f_0(u) < \infty$. Since
\[
\mathcal{M} = \lim_{n \to \infty} \int_\o u_n(x)\,dx = \int_\o u(x)\,dx,
\]
we also deduce that $\f_0^{\mathcal{M}}(u) < \infty$.

Since the proof of the liminf inequality remains unchanged, we omit the details. Next, we discuss how to construct the recovery sequence. Our approach is inspired by that of \cite{Ish_vec}. To be precise, let $u \in BV(\o; z_1, \dots, z_k)$ with
\[
\int_\o u(x)\,dx = \mathcal{M},
\]
and let $\{w_n\}_{n \in \N} \subset BV(\o; z_1, \dots, z_k)$ be the sequence provided by \Cref{lem:approxregular}. Without loss of generality, we can assume that
\begin{equation}\label{eq:wnu}
\int_\o |u(x) - w_n(x)|\,dx \le \e_n.
\end{equation}
For each $n\in\N$, by applying \Cref{lem:approxenergy} to the function $w_n$, it is possible to find let $v_n \in H^1(\o; \R^M)$ such that
\begin{equation}\label{eq:energywnvn}
\left|  \f_n(v_n) - \f_0(w_n) \right| \leq \varepsilon_n,
\end{equation}
and such that (see Step 1 and 2 in the proof of \Cref{lem:approxenergy})
\begin{equation}\label{eq:sigmamn}
\mathcal{L}^N \left( E_n \right) \le C \e_n^{\frac{5}{8}},
\end{equation}
for some constant $C > 0$ independent of $n$, where $E_n \coloneqq \{ x \in \o : v_n(x) \neq w_n(x) \}$.
Moreover, for $n \in \N$, let
\[
\eta_n \coloneqq \int_\o u(x)\,dx - \int_\o v_n(x)\,dx,
\]
and define
\[
\widetilde{v}_n(x) \coloneqq v_n(x) + \frac{\eta_n}{\mathcal{L}^N(\o)}.
\]
Notice that $\widetilde{v}_n$ satisfies the mass constraint, \emph{i.e.\@},
\[
\int_\o \widetilde{v}_n(x)\,dx = \mathcal{M}.
\]
Notice also that in view of \eqref{eq:wnu} and \eqref{eq:sigmamn} we get
\begin{equation}\label{eq:etanm}
|\eta_n| \leq C \e_n^{\frac{5}{8}}
\end{equation}
and so $\| \widetilde{v}_n - w_n \|_{L^1(\o;\R^M)}\leq C \e_n^{\frac{5}{8}}$.
Moreover, recalling that $p \mapsto W(x,p)$ is smooth for $x \in B(z_i(x), r)$ for all $i\in\{1, \dots, k\}$, that $\nabla_p W(x,z_i(x)) = 0$ for all $x \in \o$, and since $\|v_n\|_{L^\infty(\o; \R^M)} \le C$ for some constant $C > 0$ independent of $n$, an application of Taylor's formula yields
\begin{align*}
\f^{\mathcal{M}}_n(\widetilde{v}_n)& = \int_\o \left[ \frac{1}{\e_n}W(x,\widetilde{v}_n(x)) + \e_n |\nabla \widetilde{v}_n(x)|^2 \right] \,dx \\ 
& = \frac{1}{\e_n} \int_\o  \left[ W(x,v_n(x)) + \nabla_p W(x, v_n(x)) \cdot \eta_n  +\nabla^2_{pp} W(x,\xi_n(x))[\eta_n, \eta_n]   \right] \,dx \\
&\hspace{2cm}		+ \int_ \o \e_n |\nabla v_n(x)|^2 \,d x \\
&\leq  \f_n(v_n)
	+ \frac{|\eta_n| }{\e_n}\int_{E_n}  | \nabla_p W(x, v_n(x)) | \, d x
	+ \frac{|\eta_n|^2}{\e_n} \int_\o |\nabla^2_{pp} W(x,\xi_n(x)) | \, d x \\
&\leq \f_n(v_n)
	+ C\frac{|\eta_n| \mathcal{L}^N(E_n)}{\e_n} + C\frac{ |\eta_n|^2}{\e_n}.
\end{align*}
Here $|\xi_n(x) - v_n(x)| \le |\eta_n|$ for all $x\in\o$.
In view of the result of  \Cref{lem:approxregular} together with \eqref{eq:energywnvn}, \eqref{eq:sigmamn}, and \eqref{eq:etanm} we get
\[
\lim_{n\to\infty} \f^{\mathcal{M}}_n(\widetilde{v}_n) = \f^{\mathcal{M}}_0(u).
\]
This concludes the proof.
\qed

\begin{remark}
It would be interesting to understand if the mass constraint can play a role in weakening the assumptions on $W$. In this case, one might conjecture that the strategy implemented in \cite{Leo} would allow one to drop the assumption (\ref{H4}). It is not immediately clear whether this is possible since (\ref{H4}) plays a crucial role in obtaining the pivotal estimate (\ref{fniGrad}).
\end{remark}

\begin{remark}
For the proof of \Cref{thm:massconstr}, we opted to use a different approach than the one proposed in \cite{Bal}. The reason for this is that, to the best of our understanding, the argument in \cite{Bal} has a flaw that we were not able to correct.
\end{remark}

%%%%%%%%%%%%%%%%%%%%%%%%%%%%%%%%%%%%%%%%%%%%%%%%
%%%%%%%%%%%%%%%%%%%%%%%%%%%%%%%%%%%%%%%%%%%%%%%%
%%%%%%%%%%%%%%%%%%%%%%%%%%%%%%%%%%%%%%%%%%%%%%%%
%%%%%%%%%%%%%%%%%%%%%%%%%%%%%%%%%%%%%%%%%%%%%%%%

\subsection{Proof of Theorem \ref{thm:Dirichlet}}

In the proof of the compactness result we only need to modify the definition of $R'$ in \eqref{R'=} as follows:
\[
R'\coloneqq R_g + \Lambda_W(R_g),
\quad\text{ where }\quad
R_g \coloneqq \max\{ R,  \|g\|_{L^\infty(\o;\R^M)} \}.
\]
This is done in order to get \eqref{dW=dW1} and
\[
d_{W_1}(x,z_i(x), g(x)) = d_W(x,z_i(x), g(x))
\]
for all $i \in \{1,\dots,k\}$ and $x \in \partial \o$.

The liminf inequality is obtained as follows. Let $u \in BV(\o; z_1, \dots, z_k)$. Let $\widetilde{\o} \supset \overline{\o}$ be an open bounded set with Lipschitz continuous boundary and define the function $\widetilde{u} \in BV(\widetilde{\o}; \R^M)$ as
\begin{equation}\label{eq:utilde}
\widetilde{u}\coloneqq
\left\{
\begin{array}{ll}
u & \text{ in } \o\\
\widetilde{g} & \text{ in } \widetilde{\o} \setminus \o
\end{array}
\right.
\end{equation}
where $\widetilde{g} \in \Lip(\widetilde{\o} \setminus \o; \R^M)$ is such that $\widetilde{g} = g$ on $\partial \o$. We then conclude by repeating the argument in the proof of \Cref{prop:liminf} to the function $\widetilde{u}$.\\

The only change we need to make to the construction of the recovery sequence is in \Cref{lem:approxenergy}. Let $u \in BV(\o; z_1, \dots, z_k)$ be as in the statement of \Cref{lem:approxenergy}, and define $\widetilde{u} \in BV(\widetilde{\o}; \R^M)$ as in \eqref{eq:utilde}.
Extend $W$ and $z_1, \dots, z_k$ to locally Lipschitz functions defined in $\widetilde{\o} \times \R^M \times \R^M$ and $\widetilde{\o}$ respectively.
Set $z_{k+1} \coloneqq \widetilde{g}$.
By applying the construction in the proof of \Cref{lem:approxenergy} to $\widetilde{u}$ in $\widetilde{\o}$, we obtain a sequence $\{\widetilde{u}_n\}_{n \in \N} \subset H^1(\widetilde{\o}; \R^M)$ such that
\[
\lim_{n\to\infty} \int_\o |\widetilde{u}_n(x)-u(x)|\,dx = 0,
\quad\quad\quad\quad
\mathrm{Tr} (\widetilde{u}_n) = g \text{ on } \partial \o.
\]
Moreover, by looking at the energy estimates \eqref{eq:estenergy1}, \eqref{eq:estenergy2}, \eqref{eq:estenergy3}, \eqref{eq:estenergy4}, \eqref{eq:estenergy5}, and \eqref{eq:estenergy6}, we get
\[
\lim_{n\to\infty} \f_n^{D}(u_n) = \f_0(u) + \int_{\partial\o} d_W(x, \mathrm{Tr}\,u(x), g(x))\,d\hno(x).
\]
This concludes the proof.
\qed

%%%%%%%%%%%%%%%%%%%%%%%%%%%%%%%%%%%%%%%%%%%%%%%%
%%%%%%%%%%%%%%%%%%%%%%%%%%%%%%%%%%%%%%%%%%%%%%%%
%%%%%%%%%%%%%%%%%%%%%%%%%%%%%%%%%%%%%%%%%%%%%%%%
%%%%%%%%%%%%%%%%%%%%%%%%%%%%%%%%%%%%%%%%%%%%%%%%

\subsection{Proof of \Cref{noH2H3}}

Since the proof of the compactness result remains essentially unchanged, we omit the details. We only notice that since in \eqref{littleomega} one can have $\omega = 0$, we do not recover \eqref{measUij}, and thus we cannot in general conclude that $u \in BV(\o; z_1, \dots, z_k)$.

Recall that in view of \Cref{rem:counterexample}, if $u \in L^1(\o,\R^M)$ is such that $u(x) \in \{z_1(x), \dots, z_k(x)\}$ for $\mathcal{L}^N$-a.e.\@ $x\in\o$, the collection of sets $\left\{\{x \in \o : u(x) = z_i(x)\}\right\}_{i = 1}^k$ does not necessarily give a partition of $\o$. On the other hand, as one can readily check, it is possible to find sets $\o_1, \dots, \o_k$ which are measurable and pairwise disjoint and with the property that $u(x) = z_i(x)$ for $\mathcal{L}^N$-a.e.\@ $x \in \o^i$. In particular, this ensures that it is possible to write
\begin{equation}
\label{oipartition}
u = \sum_{i=1}^k z_i \ca_{\o^i}
\end{equation}
almost everywhere in $\o$, where $\{\o^i\}_{i=1}^k$ is a partition of $\o$ in sets that are not necessarily of finite perimeter.

The proof of the liminf inequality follows closely that of \Cref{prop:liminf}; the only changes required are described here in detail for the reader's convenience. For $x_0 \in J_u$ as in (\ref{goodpoint}), let $i_1, i_2 \in \{1, \dots, k\}$ be such that for every $\e > 0$ there exists $\bar{\rho}$ with the property that (\ref{x0jump}) holds for every $\rho \le \bar{\rho}$. Set
\begin{align*}
\mathcal{J}_1 & \coloneqq \left\{ j \in \{1, \dots, k\} : z_j(x_0) = z_{i_1}(x_0)\right\}, \\
\mathcal{J}_2 & \coloneqq \left\{ j \in \{1, \dots, k\} : z_j(x_0) = z_{i_2}(x_0)\right\}.
\end{align*}
With these notations at hand, reasoning as in (\ref{slicing1}) we arrive at
\begin{equation}
\label{slicing1.1}
\e \ge \sum_{j \notin \mathcal{J}_1} \left(\frac{\delta_1(x_0)}{\rho^N}\int_{Q^+(x_0,\nu, \rho)} \ca_{\o^j}(x)\,dx - \Lip(z_j)c(N)\rho\right), 
\end{equation}
where the sets $\o^j$ are defined as in (\ref{oipartition}) and $\delta_1(x_0) \coloneqq \min \left\{ |z_{i_1}(x_0) - z_j(x_0)| : j \notin \mathcal{J}_1 \right\}$. Arguing as in the proof of \Cref{prop:liminf} with (\ref{slicing1.1}) in place of (\ref{slicing1}) yields the following analogue to (\ref{-k/m}): 
\begin{align*}
\liminf_{n \to \infty}\mu_n(R_m) & \ge \sum_{i, j = 1}^k\int_{Q'_m(i,j)}d_{F_m}(z_i(x',r_m^+), z_j(x',r_m^-))\,dx' \\
& \ge \sum_{i \in \mathcal{J}_1} \sum_{j \in \mathcal{J}_2}\int_{Q'_m(i,j)}d_{F_m}(z_i(x',r_m^+), z_j(x',r_m^-))\,dx' - \frac{C \rho_m^{N - 1}}{m}.
\end{align*}
Notice that in view of (\ref{oipartition}) the sets $\{Q_m'(i,j)\}_{i,j = 1}^k$ are pairwise disjoint. Moreover, since for every $i \in \mathcal{J}_1$ and every $j \in \mathcal{J}_2$ it holds
\begin{equation}
\label{putx0.1}
d_{F_m}(z_i(x',r_m^+), z_j(x',r_m^-)) \ge d_{F_m}(z_{i_1}(x_0), z_{i_2}(x_0)) + \mathcal{O}(1),
\end{equation}
the rest of the proof follows without changes (see (\ref{putx0})). 

In the proof of the limsup inequality we need an additional approximation before applying \Cref{lem:approxregular} since, as priviously discussed in Remark \ref{rem:counterexample}, functions $u \in L^1(\o;\R^M)$ with $\widetilde{\f_0}(u) < \infty$ can be such that $\hno(J_u) = \infty$. The following result is adapted from \cite[Lemma 4.3]{Bou}.

\begin{lemma}\label{lem:approxsofp}
Let $u \in L^1(\o;\R^M)$ with $\widetilde{\f_0}(u) < \infty$ and $u(x)\in \{z_1(x), \dots, z_k(x)\}$ for $\mathcal{L}^N$-a.e.\@ $x\in\o$.
Then there exits a sequence
$\{u_n\}_{n \in \N} \subset BV(\o; z_1, \dots, z_k)$ with
\[
u_n = \sum_{i=1}^k z_i \ca_{\o^i_n},
\]
where $\{\o^i_n\}_{i=1}^k$ is Caccioppoli partition of $\o$, such that
$u_n \to u$ in $L^1(\o;\R^M)$ and
\[
\lim_{n\to\infty}\widetilde{\f_0}(u_n) 
=\lim_{n\to\infty} \sum_{i=1}^{k-1} \sum_{j=i+1}^k \int_{\partial^*\o^i_n\cap\partial^*\o^j_n} d_W(x,z_i(x),z_j(x)) \dhno(x)
\leq \widetilde{\f_0}(u).
\]
\end{lemma}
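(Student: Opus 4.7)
The strategy is to approximate $u$ by piecewise constant (with respect to a dyadic grid) index selections. For each $i\in\{1,\dots,k\}$ set $\o^i\coloneqq\{x\in\o:u(x)=z_i(x)\}$, so that $\{\o^i\}_{i=1}^k$ is (up to $\mathcal{L}^N$-null sets) a partition of $\o$, although in general not into sets of finite perimeter. For $n\in\N$, let $\mathcal{G}_n$ denote a tiling of $\R^N$ by closed cubes of side length $\eta_n$, where $\eta_n\to 0^+$ is to be chosen. For each $Q\in\mathcal{G}_n$ with $\mathcal{L}^N(Q\cap\o)>0$, let $i(Q)\in\{1,\dots,k\}$ be the smallest index achieving $\mathcal{L}^N(\o^{i(Q)}\cap Q)=\max_j\mathcal{L}^N(\o^j\cap Q)$, and set
\[
\o^i_n \coloneqq \bigcup_{Q\in\mathcal{G}_n:\,i(Q)=i}(Q\cap\o),\qquad u_n\coloneqq\sum_{i=1}^k z_i\ca_{\o^i_n}.
\]
Since $\o$ is bounded and each $\o^i_n$ is a finite union of polyhedra intersected with $\o$, the sequence $\{\o^i_n\}_{i=1}^k$ is a Caccioppoli partition of $\o$, so $u_n\in BV(\o;z_1,\dots,z_k)$.

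The $L^1$-convergence $u_n\to u$ follows from the Lebesgue density theorem: for $\mathcal{L}^N$-a.e.\@ $x\in\o$, denoting by $i(x)$ the unique index with $x\in\o^{i(x)}$ and by $Q_n(x)$ the cube of $\mathcal{G}_n$ containing $x$, one has $\mathcal{L}^N(\o^{i(x)}\cap Q_n(x))/\mathcal{L}^N(Q_n(x))\to 1$. Thus eventually $i(Q_n(x))=i(x)$ and $u_n(x)=u(x)$. Since the $z_i$ are bounded, dominated convergence yields $u_n\to u$ in $L^1(\o;\R^M)$.

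The core difficulty is the energy upper bound $\limsup_n\widetilde{\f_0}(u_n)\le\widetilde{\f_0}(u)$. Since $\partial^*\o^i_n\cap\partial^*\o^j_n$ is contained in the union of faces of cubes of $\mathcal{G}_n$ that separate regions of different majority index, we may write
\[
\widetilde{\f_0}(u_n)=\sum_{\substack{Q,Q'\in\mathcal{G}_n\text{ adjacent}\\ i(Q)\neq i(Q')}}\int_{Q\cap Q'\cap\o}d_W(x,z_{i(Q)}(x),z_{i(Q')}(x))\dhno(x).
\]
I would bound the right-hand side by associating to each such face contribution a portion of the measure $d_W(x,u^+(x),u^-(x))\hno\restr J_u$. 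The idea is a slicing argument in the direction $e$ normal to the common face of $Q$ and $Q'$: by the co-area formula, almost every slice parallel to $e$ passing through $Q\cup Q'$ intersects $\o^{i(Q)}$ with density comparable to $1/k$ on its part inside $Q$, and similarly for $\o^{i(Q')}$ on its part inside $Q'$. On such a slice, the restriction of $u$ must attain both values $z_{i(Q)}$ and $z_{i(Q')}$, so the one-dimensional structure of its jump set, together with the triangle inequality for $d_W(x,\cdot,\cdot)$ and the Lipschitz continuity of $d_W$ in $x$ (\Cref{dWisLipsc}), yields
\[
d_W(x_F,z_{i(Q)}(x_F),z_{i(Q')}(x_F))\le\int_{J_u\cap\ell}d_W(x,u^+,u^-)\dhz(x)+o(1)
\]
on each such slice $\ell$, where $x_F$ is any reference point on the shared face and the $o(1)$ comes from the Lipschitz dependence over a region of diameter $O(\eta_n)$. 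Integrating in the $(N-1)$ transverse directions via the co-area/Fubini formula then converts this into a bound on the face integral by the $\widetilde{\f_0}$-mass of $u$ restricted to a tubular neighborhood of $Q\cap Q'$, with total overlap of these tubes bounded by a fixed geometric constant. Summing over all adjacent faces produces $\widetilde{\f_0}(u_n)\le\widetilde{\f_0}(u)+o(1)$ as $\eta_n\to 0$.

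The hardest step in this program will be the slicewise comparison: we need a quantitative assurance that when two adjacent cubes have distinct majority indices, a uniformly positive fraction of slices actually witnesses a jump in $u$ of $d_W$-cost comparable to $d_W(\cdot,z_{i(Q)},z_{i(Q')})$. This is a weighted isoperimetric-type statement whose scalar version underlies the proof of \cite[Lemma 4.3]{Bou}; the adaptation to the vectorial case uses the fact, proved in \Cref{dFprop} and used in \Cref{dWisLipsc}, that $d_W$ behaves in a controlled way (triangle inequality together with uniform bounds on geodesics) even when the wells of $W$ become close to each other.
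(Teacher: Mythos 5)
Your dyadic ``majority vote'' construction cannot yield the required inequality $\lim_n \widetilde{\f_0}(u_n) \le \widetilde{\f_0}(u)$; the obstruction is the staircase phenomenon. Consider the simplest smooth configuration: $N=2$, two wells, $u$ jumping across a line segment inclined at $45^\circ$ to the grid, with $d_W(\cdot,z_1,z_2)$ nearly constant near the segment. The boundary $\partial^*\o^1_n\cap\partial^*\o^2_n$ is then a staircase whose $\mathcal H^1$-measure is asymptotically $\sqrt 2$ times the length of the segment, so $\widetilde{\f_0}(u_n)\to\sqrt 2\,\widetilde{\f_0}(u)>\widetilde{\f_0}(u)$. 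Your tube-slicing bound cannot repair this: the tubes associated to the horizontal and vertical faces of a single staircase step cover the \emph{same} piece of the diagonal, so summing over faces double-counts. Your phrase ``with total overlap of these tubes bounded by a fixed geometric constant'' is exactly the concession that the scheme can only produce $\widetilde{\f_0}(u_n)\le C(N)\,\widetilde{\f_0}(u)+o(1)$ for some $C(N)>1$, which is strictly weaker than the lemma.

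There is a second, independent gap in the slicewise comparison itself. You want to say that on a.e.\ slice $\ell$ where $u$ takes both values $z_{i(Q)}$ and $z_{i(Q')}$ with positive density, the one-dimensional restriction $u|_\ell$ pays a $d_W$-cost of order $d_W(x_F,z_{i(Q)},z_{i(Q')})$ at the points of $J_u\cap\ell$. For $u\in BV$ this is the Vol'pert slicing theorem, where the $N$-dimensional traces agree with the one-dimensional one-sided limits on a.e.\ slice. But in the regime of \Cref{lem:approxsofp} the function $u$ is merely $L^1$ with $J_u$ countably $\hno$-rectifiable, and possibly $u\notin BV(\o;\R^M)$ with $\hno(J_u)=\infty$ (see \Cref{rem:counterexample}). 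No slicing theory gives you the needed identification of $J_{u|_\ell}$ with $J_u\cap\ell$ in this generality, so the ``must attain both values hence must jump with the right cost'' step is unsupported.

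The paper avoids both difficulties by not discretizing space at all. It works instead with the Lipschitz functions $\gamma_{ij}=|z_i-z_j|$ and their sublevel sets $E^t_{ij}=\{0<\gamma_{ij}\le t\}$; by the co-area formula one can pick $t_n\downarrow 0$ with $t_n\hno(\partial^*E^{t_n}_{ij})\to 0$. The partition is modified only inside $\bigcup_{i\neq j}E^{t_n}_{ij}$, and the boundaries this creates carry pointwise cost $d_W(x,z_i(x),z_j(x))\le C|z_i(x)-z_j(x)|\le Ct_n$, so their total contribution vanishes in the limit. Away from $E^{t_n}$ the original jump set is untouched, and there $d_W(x,z_i(x),z_j(x))$ is bounded below, which is precisely what forces the modified sets $\o^i_n$ to have finite perimeter. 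No staircase appears and the proof never slices $u$.
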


\begin{proof} We divide the proof into two steps. 
\newline
\textbf{Step 1:} Let $\{\o_i\}_{i = 1}^k$ be a partition of $\o$ such that (\ref{oipartition}) holds, and notice that we can assume without loss of generality that for every $i \in \{1, \dots, k\}$
\[
\mathcal{L}^N\left(\o^i \cap \{x \in \o : u(x) = z_j(x)\}\right) > 0 \quad \Longrightarrow \quad j \le i.
\]
For $i = 1, \dots, k - 1$ and $j = i + 1, \dots, k$ consider the Lipschitz function $\gamma_{ij} \colon \o \to [0,\infty)$ given by $\gamma_{ij}(x)\coloneqq |z_i(x) - z_j(x)|$ and, for $t > 0$, set
\[
E^t_{ij} \coloneqq \left\{ x \in \o : 0 < \gamma_{ij}(x) \le t \right\}.
\]
The co-area formula (see \cite[Theorem 2.93]{AFP}) gives us
\[
\int_0^{\infty} \hno(\partial^* E^t_{ij})\,dt = \int_\o |\nabla\gamma_{ij}(x)|\,dx < \infty,
\]
from which we infer that the function $t \mapsto \hno(\partial^* E^{t}_{ij})$ belongs to the space $L^1((0,\infty))$. In turn, it is possible to find a sequence $\{t_n\}_{n \in \N} \subset (0, \infty)$ with $t_n \searrow 0$ such that, for all $i = 1, \dots, k - 1$ and $j = i + 1, \dots, k$, we have
\begin{equation}\label{eq:goodsequencetn}
\partial^* E^{t_n}_{ij} = \{ x \in\o : \gamma_{ij}(x)=t_n \},
\end{equation}
and
\begin{equation}\label{eq:energyapproxgoestozero}
\lim_{n \to \infty} t_n \hno(\partial^* E^{t_n}_{ij})=0.
\end{equation}
For $i\in\{1,\dots,k\}$ and $n\in\N$ set $E^{t_n}_{ii}\coloneqq\emptyset$.
For $n \in \N$ define
\[
\o_n^1 \coloneqq \o^1 \cup \bigcup_{j=2}^k \left[   E^{t_n}_{1j}
	\cap  \left(  (\o^1\cup\o^j)  \setminus \bigcup_{r\neq 1,j} \left(  E^{t_n}_{1r} \cup  E^{t_n}_{jr} \right)  \right)  \right]
\]
and, for $i=2,\dots,k$, 
\[
\o_n^i \coloneqq \o^i \cup \bigcup_{j=i+1}^k \left[   E^{t_n}_{ij}
	\cap  \left(  (\o^i\cup\o^j)  \setminus \bigcup_{r>i, r\neq j} \left(  E^{t_n}_{ir} \cup  E^{t_n}_{jr} \right)  \right)  \right]
	\setminus \bigcup_{j=1}^{i-1} \o^j_n.
\]
Note that $\{\o^i_n\}_{i=1}^k$ is a partition of $\o$.
We claim that the sets $\o^i_n$ are of finite perimeter in $\o$.
To prove this, for each $n \in \N$, let $\omega_n > 0$ be such that
\begin{equation}\label{omegan}
\inf \left\{ d_W(x,z_i(x),z_j(x)) : x \in \o \setminus E^{t_n}_{ij} \right\} \ge \omega_n
\end{equation}
for all $i\neq j$. For $i \in \{1, \dots, k\}$ set $J^i_u \coloneqq \{ x\in J_u : u^-(x)=z_i(x) \}$ and note that
\begin{align*}
\widetilde{\f_0}(u) & = \int_{J_u} d_W(x,u^+(x), u^-(x)) \dhno(x) \\
&\geq \int_{J^i_u} d_W(x,u^+(x), u^-(x)) \dhno(x) \\
& \geq \int_{J_u^i \setminus \bigcup_{j\neq 1}E^{t_n}_{ij}} d_W(x,u^+(x), u^-(x)) \dhno(x) \\
& \geq \omega_n \hno\left(J_u^i\setminus \bigcup_{j\neq 1}E^{t_n}_{ij}\right) \\
& = \omega_n \sum_{r\neq i} \hno\left(\partial^{1/2}\o^i_n \cap \partial^{1/2} \o^r_n \setminus \bigcup_{j\neq 1}E^{t_n}_{ij}\right) \\
& = 2 \omega_n \hno\left(\partial^{1/2}\o^i_n \setminus \bigcup_{j\neq 1}E^{t_n}_{ij}\right),
\end{align*}
where in the last step we used the fact that $u(x)\in\{ z_1(x),\dots, z_k(x) \}$ for $\mathcal{L}^N$-a.e.\@ $x\in\o$.
Using the fact that each of the sets $E^{t_n}_{ij}$ has finite perimeter, and recalling that by assumption $\widetilde{\f_0}(u) < \infty$, we conclude.
%\begin{equation}\label{eq:1on}
%\partial^{1/2}\o^1_n   \subset\partial^{1/2}\o^1 \,\cup\, \bigcup_{j=2}^k \partial^{1/2} E^{t_n}_{1j} 
%\end{equation}
%and that, for $i\in\{2,\dots,k\}$,
%\begin{equation}\label{eq:ion}
%\partial^{1/2}\o^i_n \subset\partial^{1/2}\o^i \,\cup\, \bigcup_{j=i+1}^k \partial^{1/2} E^{t_n}_{ij}
%\cup \bigcup_{m=1}^{i-1} \partial^{1/2} \o^m_n. 
%\end{equation}
%For each $n \in \N$, let $\omega_n > 0$ be such that
%\begin{equation}\label{omegan}
%\inf \left\{ d_W(x,z_i(x),z_j(x)) : x \in \o \setminus E^{t_n}_{ij} \right\} \ge \omega_n,
%\end{equation}
%for all $i\neq j$.
%Then, from \eqref{eq:1on} and \eqref{omegan} we get
%\begin{align}\label{eq:finite1}
%\hno(\partial^{1/2}\o^1_n) & \le  \hno ( \partial^{1/2}\o^1 ) +\sum_{j=2}^k \hno(\partial^* E^{t_n}_{1j})  \nonumber \\
%&\le \frac{1}{\omega_n} \sum_{j=2}^k \int_{\partial^*\o^1\cap\partial^*\o^j}
%	d_W(x, z_1(x), z_j(x))\dhno(x) + \sum_{j=2}^k \hno(\partial^* E^{t_n}_{1j})  \nonumber \\ 
%&\le \frac{1}{\omega_n} \f_0(u) + \sum_{j=2}^k \hno(\partial^* E^{t_n}_{1j})  \nonumber\\
%&< \infty 
%\end{align}
%where last inequality follows from \eqref{eq:energyapproxgoestozero} and by recalling that $\widetilde{\f_0}(u)<\infty$.
%Therefore, reasoning by induction, we prove that $\hno(\partial^{1/2}\o^i_n) < \infty$ by using \eqref{eq:finite1} and \eqref{eq:ion}.
\newline
\textbf{Step 2:} Define
\[
u_n \coloneqq \sum_{i=1}^k z_i \ca_{\o^i_n}.
\]
Since the sets $\o^i_n$ are of finite perimeter in $\o$, it follows from \cite[Theorem 3.84]{AFP} that $u_n \in BV(\o; z_1, \dots, z_k)$, and that it is possible to write
\begin{equation}\label{eq:writing}
\widetilde{\f_0}(u_n) = \sum_{i=1}^{k-1} \sum_{j=i+1}^k \int_{\partial^*\o^i_n\cap\partial^*\o^j_n} d_W(x,z_i(x),z_j(x)) \dhno(x).
\end{equation}
Using the fact that $\|z_i\|_{L^\infty(\o;\R^M)} \le C < \infty$, we deduce that
\[
\lim_{n\to\infty}\|u_n-u\|_{L^1(\o; \R^M)} \le \lim_{n\to\infty}C \sum_{i=1}^{k-1} \sum_{j=i+1}^k \mathcal{L}^N(E^{t_n}_{ij})=0.
\]
Moreover, by \eqref{S'} we infer that there exists $C<\infty$ such that
\begin{equation}\label{eq:boundness}
d_W(x,z_i(x), z_j(x)) \le C|z_i(x)-z_j(x)|,
\end{equation}
for all $x \in \o$ and all $i \neq j$.
Using \eqref{eq:energyapproxgoestozero}, \eqref{eq:writing}, and \eqref{eq:boundness}, we obtain
\begin{align*}
& \lim_{n\to\infty} |\widetilde{\f_0}(u_n) - \widetilde{\f_0}(u)| \\
&\hspace{1cm}\le \lim_{n\to\infty} \left[ C \sum_{i=1}^{k-1} \sum_{j=i+1}^k t_n \hno(\partial E^{t_n}_{ij})
	    +\int_{J_u\setminus E^{t_n}} d_W(x, u^+(x), u^-(x)) \dhno(x) \right] \\
&\hspace{1cm}\leq \lim_{n\to\infty} \int_{J_u\setminus E^{t_n}} d_W(x, u^+(x), u^-(x)) \dhno(x),
\end{align*}
where $E^{t_n} \coloneqq \bigcup_{i\neq j} E^{t_n}_{ij}$.
This concludes the proof.
\end{proof}

The rest of the proof of \Cref{noH2H3} follows by a diagonal argument without additional changes; therefore we omit the details. \qed

\subsection{Final remarks}\label{remH4}
We conclude this section by remarking that \Cref{prop:liminf} and \Cref{prop:limsup} can be proved independently of \Cref{prop:cpt}. Indeed, we only invoke our compactness result in \Cref{prop:liminf} to deduce that for a given $u \in L^1(\o;\R^M)$ the following hold:
\begin{itemize}
\item[$(i)$] the jump set $J_u$ is countably $\hno$-rectifiable;
\item[$(ii)$] if $\{u_n\}_{n\in\N} \subset H^1(\o;\R^M)$ with $\sup_{n\in\N} \f_n(u_n)<\infty$ is such that $u_n \to u$ in $L^1(\o;\R^M)$, then $u(x) \in \{z_1(x),\dots,z_k(x)\}$ for $\mathcal{L}^N$-a.e.\@ $x\in\o$.
\end{itemize}
Notice the recent result \cite{Del} implies that $(i)$ holds for all $u \in L^1_{\mathrm{loc}}(\o; \R^M)$, while $(ii)$ is readily derived reasoning as follows: without loss of generality, up to the extraction of a subsequence, we can assume $u_n(x) \to u(x)$ for $\mathcal{L}^N$-a.e.\@ $x \in \o$. Let
\[
A \coloneqq \left\{ x \in \o : u(x)\not\in \{z_1(x), \dots, z_k(x)\} \right\},
\]
and, arguing by contradiction, suppose that $\mathcal{L}^N(A) > 0$. Then Fatou's lemma gives
\[
0 < \int_A W(x,u(x))\,dx \leq \liminf_{n\to\infty} \int_A W(x,u_n(x))\,dx \leq \liminf_{n\to\infty }C \e_n = 0,
\]
where $C > 0$ is independent of $n$. Consequently, we see that assumption (\ref{H4}) is used only to deduce \eqref{Hold4}, which, together with (\ref{H1})--(\ref{H3}), is needed to ensure that we are in a position to apply \Cref{dFprop} and \Cref{dWisLipsc}. In conclusion, the $\Gamma$-convergence results of \Cref{statementMR} hold for a potential $W$ satisfying (\ref{H1})--(\ref{H3}) and (\ref{Hold4}) in place of (\ref{H4}).

%%%%%%%%%%%%%%%%%%%%%%%%%%%%%%%%%%%%%%%%%%%%%%%%
%%%%%%%%%%%%%%%%%%%%%%%%%%%%%%%%%%%%%%%%%%%%%%%%
%%%%%%%%%%%%%%%%%%%%%%%%%%%%%%%%%%%%%%%%%%%%%%%%
%%%%%%%%%%%%%%%%%%%%%%%%%%%%%%%%%%%%%%%%%%%%%%%%

\subsection*{Acknowledgements}
The authors acknowledge the Center for Nonlinear Analysis at Carnegie Mellon University (NSF PIRE Grant No.\@ OISE-0967140) where part of this work was carried out. The work of the first author has been supported by the National Science Foundation under Grant No.\@ DMS-1411646 of Irene Fonseca during the period at CMU, by Grant Nos.\@ EP/R013527/1 and EP/R013527/2 ``Designer Microstructure via Optimal Transport Theory" of David Bourne during the period at Heriot-Watt University. The research of the second author was partially funded by the National Science Foundation under Grant Nos.\@ DMS-1412095 and DMS-1714098 during the period at CMU, by the research support programs of Charles University under Grant Nos.\@ PRIMUS/19/SCI/01 and UNCE/SCI/023, and by the Czech Science Foundation (GA\v{C}R) under Grant No.\@ GJ17-01694Y. The authors would also like to thank Giovanni Leoni for his helpful insights.

%%%%%%%%%%%%%%%%%%%%%%%%%%%%%%%%%%%%%%%%%%%%%%%%
%%%%%%%%%%%%%%%%%%%%%%%%%%%%%%%%%%%%%%%%%%%%%%%%
%%%%%%%%%%%%%%%%%%%%%%%%%%%%%%%%%%%%%%%%%%%%%%%%
%%%%%%%%%%%%%%%%%%%%%%%%%%%%%%%%%%%%%%%%%%%%%%%%

\bibliographystyle{siam}
\bibliography{Bibliography}

\end{document}